\definecolor{gr}{rgb}   {0.,   0.69,   0.23 }
\definecolor{bl}{rgb}   {0.,   0.5,   1. }
\definecolor{mg}{rgb}   {0.85,  0.,    0.85}
\definecolor{yl}{rgb}   {0.8,  0.7,   0.}
\definecolor{or}{rgb}  {0.7,0.2,0.2}
\newtheorem{theorem}{Theorem} [section]
\newtheorem{lemma}[theorem]{Lemma}
\newtheorem{proposition}[theorem]{Proposition}
\newtheorem{remark}[theorem]{Remark}
\newtheorem{corollary}[theorem]{Corollary}
\newtheorem*{ack}{Acknowledgments}
\newtheorem{conjecture}{Conjecture}
\DeclareMathOperator*{\intt}{\int}
\DeclareMathOperator{\MAX}{MAX}
\newcommand{\1}{\hspace{0.5mm}\text{I}\hspace{0.5mm}}
\newcommand{\II}{\text{I \hspace{-2.8mm} I} }
\newcommand{\III}{\text{I \hspace{-2.9mm} I \hspace{-2.9mm} I}}
\newcommand{\noi}{\noindent}
\newcommand{\Z}{\mathbb{Z}}
\newcommand{\R}{\mathbb{R}}
\newcommand{\T}{\mathbb{T}}
\let\Re=\undefined\DeclareMathOperator*{\Re}{Re}
\let\Im=\undefined\DeclareMathOperator*{\Im}{Im}
\let\P= \undefined
\newcommand{\P}{\mathbf{P}}
\newcommand{\PP}{\mathbb{P}}
\newcommand{\E}{\mathbb{E}}
\newcommand{\D}{\mathcal{D}}
\renewcommand{\L}{\mathcal{L}}
\newcommand{\M}{\mathcal{M}}
\newcommand{\N}{\mathcal{N}}
\newcommand{\NB}{\mathbb{N}}
\newcommand{\F}{\mathcal{F}}
\newcommand{\al}{\alpha}
\newcommand{\be}{\beta}
\newcommand{\dl}{\delta}
\newcommand{\eps}{\varepsilon}
\newcommand{\g}{\gamma}
\newcommand{\ld}{\lambda}
\newcommand{\s}{\sigma}
\newcommand{\Si}{\Sigma}
\newcommand{\ft}{\widehat}
\newcommand{\wt}{\widetilde}
\newcommand{\cj}{\overline}
\newcommand{\dx}{\partial_x}
\newcommand{\dt}{\partial_t}
\newcommand{\dd}{\partial}
\newcommand{\ta}{\theta}
\renewcommand{\l}{\ell}
\renewcommand{\o}{\omega}
\renewcommand{\O}{\Omega}
\newcommand{\les}{\lesssim}
\newcommand{\ges}{\gtrsim}
\newcommand{\jb}[1]
{\langle #1 \rangle}
\newcommand{\ind}{\mathbf{1}}
\newcommand{\low}{\textup{low}}
\newcommand{\high}{\textup{high}}
\newcommand{\HS}{\textup{HS}}
\DeclareMathOperator{\Id}{Id}
\DeclareMathOperator{\Law}{Law}
\numberwithin{equation}{section}
\numberwithin{theorem}{section}
\newcommand{\too}{\longrightarrow}
\begin{document}
\baselineskip = 14pt

\title[Global dynamics for SKdV]{Global dynamics for the stochastic KdV equation with white
noise as initial data}

\author[T.~Oh, J.~Quastel, and P.~Sosoe]
{Tadahiro Oh, Jeremy Quastel, and 
Philippe Sosoe}

\address{
Tadahiro Oh\\
 School of Mathematics\\
The University of Edinburgh\\
and The Maxwell Institute for the Mathematical Sciences\\
James Clerk Maxwell Building\\
The King's Buildings\\
Peter Guthrie Tait Road\\
Edinburgh\\ 
EH9 3FD\\
 United Kingdom}

\email{hiro.oh@ed.ac.uk}

\address{
Jeremy Quastel\\
Departments of Mathematics and Statistics\\
University of Toronto\\
40 St. George St\\
Toronto, ON M5S 2E4, Canada,
and
School of Mathematics\\
Institute for Advanced Study\\
Einstein Drive, Princeton\\ NJ 08540\\ USA}
\email{quastel@math.toronto.edu}

\address{Philippe Sosoe\\
Department of Mathematics\\
Cornell University\\ 
310 Malott Hall\\ 
Cornell University\\
 Ithaca\\ New York 14853\\ USA}

\email{psosoe@math.cornell.edu}

\subjclass[2020]{35Q53, 35R60, 60H30}

\keywords{Korteweg-de Vries equations; stochastic Korteweg-de Vries equations;
white noise; evolution system of measures; invariant measure}

\begin{abstract}
We study 
the stochastic 
 Korteweg-de Vries equation (SKdV)  with an additive space-time white noise forcing, 
 posed on the one-dimensional torus.
In particular, we construct global-in-time solutions to SKdV 
with spatial white noise initial data. 
Due to the lack of an invariant measure, 
 Bourgain's invariant measure argument is not applicable to this problem.
In order to overcome this difficulty, we implement a variant of  Bourgain's argument in the context of 
an evolution system of measures and construct global-in-time dynamics.
Moreover, we show that 
the white noise measure with variance $1+t$ is an evolution system of measures
for SKdV with the white noise initial data.

\end{abstract}


\maketitle

\tableofcontents

\newpage

\section{Introduction}\label{SEC:1}

\subsection{Main result}
The main objective of the present paper is to explain how techniques developed to study 
 invariance of certain measures (in our case, a spatial white noise) under the flow of 
 Hamiltonian partial differential equations (PDEs) can be combined with the analysis of stochastic perturbations of these equations to construct global-in-time solutions in a probabilistic setting.

In particular, we consider the following Cauchy problem for the stochastic Korteweg-de Vries equation (SKdV)
on the one-dimensional torus $\T = \R/(2\pi \Z)$:
\begin{align}
\begin{cases}
\dt u + \dx^3 u + u \dx u = \xi\\
u|_{t = 0} = u_0.
\end{cases}
\label{KdV1}
\end{align}

\noi
Here, $\xi$ denotes an additive (Gaussian) space-time white noise forcing
whose space-time covariance is (formally) given by 
\begin{align}
 \E[ \xi(x_1, t_1)\xi(x_2, t_2) ] = \dl(x_1 - x_2) \dl (t_1 - t_2)
\label{white1}
\end{align} 

\noi
for $x_1, x_2 \in \T$ and $t_1, t_2 \in \R_+$ 
with $\dl$ denoting the Dirac delta function.
In particular, we study~\eqref{KdV1}
with  a  spatial  white noise\footnote{As it is customary in the literature, 
with a slight abuse of notation, 
we use the term `white noise' to refer to both the distribution-valued random variable $u_0^\o$ in \eqref{series1}
and its law $\mu_1 = \Law (u_0^\o)$, when there is no confusion.
Here, $\Law(X)$ denotes the law of a random variable $X$.
For clarity, we may refer to $\mu_1 = \Law (u_0^\o)$ as the white noise measure.} on $\T$, independent of the forcing  $\xi$,  as initial data.
More concretely, we take $u_0 = u_0^\o$ of the form:\footnote{By convention, 
 we endow $\T$ with the normalized Lebesgue measure $(2\pi)^{-1} dx$.}
\begin{align}
u_0^\o(x) = \sum_{n \in \Z} g_n(\o) e^{inx}, 
\label{series1}
\end{align}

\noi
where $\{g_n \}_{n \in \Z}$ is a family of independent standard 
complex-valued Gaussian random variables conditioned that $g_{-n} = \cj{g_n}$, $n \in \Z$.
The main difficulty of this problem comes
from the roughness of the noise
and the white noise initial data, 
such that the solution $u(t)$ to~\eqref{KdV1}
belong to $H^s(\T) \setminus H^{-\frac 12} (\T)$, $s < - \frac 12$, almost surely.
Here, 
 $H^s(\mathbb{T})$ denotes the $L^2$-based Sobolev space
  defined  by the norm:
\[\|u\|_{H^s}=\bigg(\sum_{n\in\mathbb{Z}} \jb{n}^{2s}|\ft u(n)|^2\bigg)^\frac 12,\]

\noi
where $\jb{\,\cdot\,} = \sqrt {1 + |\cdot|^2}$.

The well-posedness issue of SKdV with an additive forcing:
\begin{align}
\dt u + \dx^3 u + u \dx u = \phi \xi, 
\label{KdV2}
\end{align}

\noi
where $\phi$ is a bounded operator on $L^2$, 
 has 
been studied  both on the real line and on the torus
\cite{DD1, ddt-2, Prin,  ddt-1, OH4}.
In the periodic setting,  de Bouard, Debussche, and Tsutsumi~\cite{ddt-1}
proved local well-posedness of \eqref{KdV2} on $\T$
when $\phi$ is a Hilbert-Schmidt operator from $L^2(\T)$ to $H^s(\T)$
for $s > -\frac{1}{2}$, barely missing the case of an additive space-time white noise.
This local well-posedness result  in~\cite{ddt-1} was obtained via  a contraction argument, 
based on the  Fourier restriction norm method (namely, utilizing the $X^{s, b}$-spaces) adapted to the Besov space, 
utilizing the endpoint Besov regularity of the Brownian motion
\cite{C1, Roy, Benyi1}.
With an additional assumption that $\phi$ is Hilbert-Schmidt from $L^2(\T)$
to $L^2(\T)$, they also proved global well-posedness of \eqref{KdV2} in $L^2(\T)$.
In~\cite{OH4}, the first author
improved this result and proved local well-posedness of~\eqref{KdV2}
even when $\phi = \Id$ (thus reducing to \eqref{KdV1}), 
thus handling the case of an additive  space-time white noise.\footnote{Note that 
$\phi = \Id$ is a Hilbert-Schmidt operator from $L^2(\T)$ to $H^s(\T)$
for $s < - \frac 12$ but not for $s \geq - \frac 12$.}
We point out that
 the argument in \cite{OH4} is based
on an approximation argument, in particular, not based
on a contraction argument.
Below, we will describe the approach in \cite{OH4} more in detail;
see Section \ref{SEC:LWP}.
Our main goal is to construct 
global-in-time dynamics for \eqref{KdV1}
with the spatial white noise $u_0^\o$ in~\eqref{series1} as initial data.

Before proceeding further, 
let us  go over the known well-posedness results for  the (deterministic) KdV on $\T$:
\begin{equation} 
\dt u + \dx^3 u + u \dx u = 0.
\label{KDV}
\end{equation}

\noi
In \cite{BO1}, Bourgain  introduced 
the so-called Fourier restriction norm method, 
utilizing  the $X^{s, b}$-spaces defined by the norm:
\begin{equation} \label{Xsb}
\| u \|_{X^{s, b}(\mathbb{T} \times \mathbb{R})} = \| \jb{n}^s \jb{\tau - n^3}^b 
\ft{u}(n, \tau) \|_{\l^2_n L^2_{\tau}(\mathbb{Z} \times \R)},
\end{equation}

\noindent
and proved  local well-posedness of \eqref{KDV} in $L^2(\mathbb{T})$
via a fixed point argument, 
immediately yielding global well-posedness in $L^2(\mathbb{T})$
thanks to the conservation of the $L^2$-norm.
Subsequently, 
Kenig, Ponce, and Vega \cite{KPV4} (also see \cite{CKSTT4})
improved Bourgain's result 
and proved  local well-posedness of \eqref{KDV} in $H^{-\frac{1}{2}}(\T)$
by establishing the following bilinear estimate:
\begin{equation}
\| \dx(uv) \|_{X^{s, b -1}} \lesssim \| u \|_{X^{s, b}} \| v \|_{X^{s, b}}
 \label{bilin1}
\end{equation}

\noi
for $s \geq -\frac{1}{2}$ and $b = \frac{1}{2}$
under the (spatial) {\it mean-zero} assumption on $u$ and $v$.
In~\cite{CKSTT4}, Colliander, Keel, Staffilani, Takaoka, and Tao then  proved 
the corresponding global well-posedness result in $H^{-\frac{1}{2}}(\T)$ via the $I$-method. 
The KdV equation \eqref{KDV} is also known
to be  one of the simplest completely integrable PDEs,
and there are well-posedness results for \eqref{KDV}, 
exploiting the completely integrable structure  of the equation.
In \cite{BO3}, Bourgain proved  global well-posedness of \eqref{KDV}
in the class $\M(\T)$ of finite Borel measures  $\ld $ on $\T$, assuming that its total variation $\|\ld\|$ is sufficiently small.
His proof was based on partially iterating the Duhamel formulation of \eqref{KDV}
and establishing bilinear and trilinear estimates, assuming an a priori uniform bound 
of the form: 
\begin{equation} \label{BOO}
\sup_{t \in \R}
\sup_{n\in \mathbb{Z}} |\ft{u}(n, t)| \le C
\end{equation}

\noi
on the Fourier coefficients of the solution $u$.
Then, he established the global-in-time a priori bound~\eqref{BOO}, using the complete integrability.
In \cite{KT}, Kappeler and Topalov  proved  global well-posedness of \eqref{KDV} in $H^{-1}(\T)$
via the inverse spectral method.
See also \cite{KV}.

For SKdV  \eqref{KdV1} with a random perturbation,  
such an  integrable structure is destroyed 
and thus the approaches based on the complete integrability of KdV are no longer applicable.
Nonetheless, in \cite{OH4}, the first author adapted Bourgain's approach \cite{BO3}, 
based on a partial iteration of the Duhamel formulation (= the mild formulation)
of \eqref{KdV1}, 
and proved local well-posedness of \eqref{KdV1}.
In particular, he bypassed the assumption~\eqref{BOO}
by employing the Fourier restriction norm method adapted
to the ``Fourier-Besov''  space~$\ft b^s_{p, \infty}(\T)$ 
introduced in \cite{OH1}, defined by the norm:
\begin{align}
\begin{split}
\|f\|_{\widehat{b}^s_{p,\infty}}=\|\ft f\|_{b^s_{p,\infty}}
& = \sup_{j \in \Z_{\ge 0} }\| \jb{n}^s \ft f(n)\|_{\l^p_{|n|\sim 2^j}}\\
& =\sup_{j \in \Z_{\ge 0} }\bigg(\sum_{|n|\sim 2^j}\langle n \rangle^{sp}|\ft f(n)|^p\bigg)^{\frac 1p}, 
\end{split}
\label{bs}
\end{align}

\noi
which captures the spatial regularity of the space-time white noise
when $sp < -1$; see Proposition 3.4 in \cite{OH1}.\footnote{In other words, 
$\phi = \Id$ is a $\g$-radonifying operator from $L^2(\T)$ to $\ft b^s_{p, \infty}(\T)$
when $sp < -1$, 
which is a suitable generalization of  the notion of  Hilbert-Schmidt operators in the Banach space setting;
see  \cite{BP, vNW}. See also \cite[Chapter 9]{HNVW}.}
Here, $\Z_{\ge 0} = \NB \cup \{0\}$, 
and $\{|n|\sim 2^j\}$ means
$\{2^{j-1} < |n| \le 2^j\}$ when $j \ge 1$
and 
$\{|n| \leq 1\}$ when $j = 0$.
Note that, by taking $p > 2$ (but close to $2$), 
we can take $s > -\frac 12$, still satisfying $sp < -1$, 
which is crucial in establishing relevant nonlinear estimates.
In Section \ref{SEC:LWP}, 
we go over some aspects of the local well-posedness argument from~\cite{OH4}.

\medskip

We now state our main result, 
which extends 
the solution constructed in \cite{OH4} 
globally in time in the case of the  white noise initial data.
We say that $u$ is a solution to \eqref{KdV1} if it satisfies the following Duhamel formulation (= the
mild formulation):
\begin{equation}
u(t)=S(t)u_0-\frac{1}{2}\int_0^t S(t-t')\partial_x u^2(t')d t+\int_0^t S(t-t')d W(t'), 
\label{KdV3}
\end{equation}

\noi
where $S(t) = e^{-t\dx^3}$ denotes the linear KdV propagator (= the Airy propagator)
and $W$ denotes a cylindrical Wiener process on $L^2(\T)$:
\begin{align}
W(t)
 = \sum_{n \in \Z} \be_n (t) e^{inx}, 
\label{W1}
\end{align}

\noi
where 
$\{\be_n \}_{n \in \Z}$ 
is defined by 
$\be_n(t) = \jb{\xi, \ind_{[0, t]} \cdot e_n}_{ x, t}$.
Here, $\jb{\cdot, \cdot}_{x, t}$ denotes 
the duality pairing on $\T\times \R_+$.
As a result, 
we see that $\{ \be_n \}_{n \in \Z}$ is a family of mutually independent complex-valued
Brownian motions conditioned that $\be_{-n} = \cj{\be_n}$, $n \in \Z$. 
In particular, $\be_0$ is  a standard real-valued Brownian motion,
and 
   we have
 \begin{align}
 \text{Var}(\be_n(t)) = \E\big[
 \jb{\xi, \ind_{[0, t]} \cdot e_n}_{x, t}\cj{\jb{\xi, \ind_{[0, t]} \cdot e_n}_{x, t}}
 \big] = \|\ind_{[0, t]} \cdot e_n\|_{L^2_{x, t}}^2 = t
\label{W1a}
 \end{align}

\noi
for any $n \in \Z$.
Note that the space-time white noise $\xi$ in \eqref{KdV1} is a distributional time derivative of
the cylindrical Wiener process $W$ in \eqref{W1}.
The third term on the right-hand side of \eqref{KdV3} is 
the so-called stochastic convolution, representing the 
effect of the stochastic forcing.

In the following, we set
\begin{align}
s = -\frac 12 + \dl_1 \qquad \text{and} \qquad p = 2 + \dl_2
\label{cond1}
\end{align}

\noi
for some small $\dl_1, \dl_2 > 0$
such that $sp < -1$.
Given $\al \ge 0$,\footnote{By convention, we have $X\equiv 0$ when $\al = 0$.
Namely, $\mu_0 = \dl_0$, where $\dl_0$ is the Dirac delta distribution at the trivial function.} we say that a distribution-valued random variable $X$ on $\T$ (and its law, denoted by $\mu_\al$)
is a (spatial) white noise on~$\T$ with variance~$\al$ if
\begin{align}
 \mu_{\al} = \Law(X) = \Law (\sqrt \al\, u_0^\o),
 \label{mu1}
\end{align}

\noi
where $u_0^\o$ is the white noise (with variance 1) in \eqref{series1}.

\begin{theorem}\label{THM:1}
The stochastic KdV equation \eqref{KdV1} with an additive space-time white noise
forcing is globally well-posed with white noise initial data.
More precisely,  
 there exist small $\dl_1, \dl_2 > 0$ 
 such that,  with probability $1$,
 there exists
  a unique global-in-time solution
 $u$ to~\eqref{KdV1},
 belonging to the class  
  $C(\R_+; \ft b^s_{p,\infty} (\T))$
  with $s$ and $p$ as in \eqref{cond1}, 
 with the white noise initial data  $u_0^\o$ in \eqref{series1}.
   Moreover, for any $t\ge 0$, 
 we have
\begin{align}
\Law (u(t)) = \mu_{1+t}.
\label{th1}
\end{align}

\noi
Namely,   
 $u(t)$ is    a  white noise with variance $1+t$. 
\end{theorem}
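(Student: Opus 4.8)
The plan is to first understand why the target law $\mu_{1+t}$ is the natural one, by examining the linear dynamics, and then to show that the nonlinearity leaves this law untouched. Dropping the nonlinearity in \eqref{KdV3}, the solution is $S(t)u_0 + \int_0^t S(t-t')\,dW(t')$. Since $S(t) = e^{-t\dx^3}$ acts on the $n$-th Fourier mode as the unimodular multiplier $e^{in^3 t}$, it merely rotates each complex Gaussian coefficient $g_n$ by a phase and preserves the reality constraint $g_{-n} = \cj{g_n}$ (as $(-n)^3 = -n^3$); hence $S(t)u_0$ is again a white noise with variance $1$. The stochastic convolution has $n$-th coefficient $\int_0^t e^{i(t-t')n^3}\,d\be_n(t')$, a centered Gaussian of variance $\int_0^t dt' = t$ by \eqref{W1a}, so it is an \emph{independent} white noise with variance $t$. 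Adding the two independent Gaussian fields produces a white noise with variance $1+t$, i.e.\ $\mu_{1+t}$. The content of the law statement is thus that the quadratic drift $u\dx u$ is statistically invisible, which reflects the (formal) invariance of each fixed-variance white noise under the deterministic KdV flow.

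To make this rigorous I would work with a frequency-truncated (Galerkin) system $\dt u_N + \dx^3 u_N + \frac{1}{2}P_N\dx\big((P_N u_N)^2\big) = P_N\xi$, a finite-dimensional It\^o SDE, and verify that the truncated white noise $\mu^N_{1+t}$ is an evolution system of measures for it: its Gaussian density $\rho_\al \propto \exp\!\big(-\|u_N\|_{L^2}^2/(2\al)\big)$, evaluated at $\al = 1+t$, should solve the Kolmogorov forward equation $\dt \rho = \mathcal{L}^*\rho$. Two cancellations drive this. First, the deterministic drift contributes nothing to $\mathcal{L}^*\rho_{1+t}$: the Airy part $-\dx^3 u_N$ is skew-adjoint, while for the nonlinear part one checks the energy orthogonality $\langle \dx(u_N^2), u_N\rangle = -\frac{1}{3}\int \dx(u_N^3)\,dx = 0$ together with the vanishing of its phase-space divergence, which follows from the odd symmetry $\sum_{|n|\le N} n = 0$. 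Second, the diffusion exactly matches the variance growth: since the Gaussian density obeys the heat identity $\partial_\al \rho_\al = \frac{1}{2}\Delta \rho_\al$ (with $\Delta$ built from the noise covariance, which by \eqref{W1a} adds unit variance per mode per unit time), one gets $\dt \rho_{1+t} = \frac{1}{2}\Delta\rho_{1+t} = \mathcal{L}^*\rho_{1+t}$. The same energy identity yields $\E\|u_N(t)\|_{L^2}^2 = \E\|u_N(0)\|_{L^2}^2 + (\#\text{modes})\,t$, giving non-explosion of the truncated SDE and justifying the forward-equation computation. This establishes $\Law(u_N(t)) = \mu^N_{1+t}$ for all $t\ge 0$.

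I would then transfer this to the full equation using the approximation-based local theory of \cite{OH4}: the truncated solutions $u_N$ converge, locally in time, in $C([0,\tau];\ft b^s_{p,\infty})$ to the solution $u$ of \eqref{KdV3}, while $\mu^N_{1+t} \rightharpoonup \mu_{1+t}$ as truncated white noise converges to white noise in $\ft b^s_{p,\infty}$; passing to the limit in $\Law(u_N(t)) = \mu^N_{1+t}$ identifies $\Law(u(t)) = \mu_{1+t}$ on the local interval. Globalization is the crux. Because $\mu_{1+t}$ is not invariant, Bourgain's classical scheme does not apply directly; instead I would run it relative to the evolution system $\{\mu_{1+t}\}_{t\ge 0}$. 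Fixing $T$, over $[0,T]$ the variances lie in $[1,1+T]$, so the laws $\Law(u_N(t)) = \mu^N_{1+t}$ supply Gaussian-type tail bounds on $\|u_N(t)\|_{\ft b^s_{p,\infty}}$ that are uniform in $N$ and in $t\in[0,T]$. Combining these with the quantitative local existence time from the local theory (a lower bound in terms of the data norm), a union bound over a finite grid $t_k = k\tau_0$ in $[0,T]$ controls $\sup_{t\in[0,T]}\|u_N(t)\|_{\ft b^s_{p,\infty}}$ off an event of small probability, uniformly in $N$; letting $N\to\infty$, then $\eps\to 0$ and $T\to\infty$, yields an almost surely global solution in $C(\R_+;\ft b^s_{p,\infty})$ and propagates \eqref{th1} to all $t\ge 0$.

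I expect the principal difficulty to be precisely this last step: adapting Bourgain's invariant-measure globalization to a situation where no invariant measure exists. One must ensure that the measure control at the grid points is genuinely uniform in $N$ and survives passage through the delicate low-regularity ($s < -\frac{1}{2}$) local theory, and that the time dependence of $\mu_{1+t}$ is harmless over finite windows. A secondary technical point is the compatibility of the truncation used in the forward-equation computation with the approximation scheme underlying the local well-posedness of \cite{OH4}, together with the bookkeeping of the nonzero Brownian mean $\ft u(0,t) = g_0 + \be_0(t)$ relative to the mean-zero hypothesis in the bilinear estimate \eqref{bilin1}.
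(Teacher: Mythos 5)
Your proposal is correct in outline and takes essentially the same approach as the paper: verifying via the Kolmogorov forward equation that the Gaussian density with variance $1+t$ is an evolution system of measures for a truncated system (Proposition \ref{PROP:finite}, Lemma \ref{LEM:stat}), running Bourgain's globalization scheme relative to this evolution system to obtain growth bounds uniform in $N$ via a grid/union-bound argument with Gaussian tails (Proposition \ref{PROP:main}), and passing to the limit $N \to \infty$, $\eps \to 0$, $T \to \infty$ to get the global solution and the law identity \eqref{th1} (Proposition \ref{PROP:GWP1} and Section \ref{SEC:5}). The only deviations are minor or correctly flagged by you as the remaining difficulties: the paper truncates only the nonlinearity, keeping full noise and data (its truncated system then decouples into exactly your Galerkin SDE plus a linear high-frequency flow, so that $\Law(u^N(t)) = \mu_{1+t}$ holds exactly rather than only in the limit), and the convergence $u_N \to u$ is not supplied by the local theory of \cite{OH4} alone but requires the quantitative difference estimates with the $N^{-\delta/2}$ gain (and the extra restriction $\delta > \frac{p-2}{3p}$) that the paper develops in Section \ref{SEC:5}.
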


The proof of Theorem \ref{THM:1}
is based on a variant of Bourgain's invariant measure argument~\cite{BO2}
in the context of an {\it evolution system of measures} \cite{DR, DD}, 
which is
a natural generalization of the concept of invariant measures for an autonomous dynamical system.
Let us  give a somewhat formal definition
 of  an evolution system of measures.
Let $\Phi_{t_1, t_2}= \Phi_{t_1, t_2}^\o$, $t_2 \ge t_1 \ge 0$,  be a solution map for a given autonomous (random) dynamical system, 
sending the data $\varphi$ at time $t_1$ to the solution $\Phi_{t_1, t_2}\varphi$ at time $t_2$.
Then, we define the transition semigroup $P_{t_1, t_2}$
by 
\begin{align}
 P_{t_1, t_2}F(\varphi) = \E[ F(\Phi_{t_1, t_2}^\o \varphi)]
\label{trans1}
\end{align} 

\noi
for a bounded measurable function $F$ on the phase space $\M$.
Then, we say that\footnote{Strictly speaking, an evolution system of measures
is the mapping $t \in \R_+ \mapsto \rho_t \in \mathcal{P}(\M)$, 
where $\mathcal{P}(\M)$ denotes the family of probability measures on $\M$.
However, we simply refer to the family $\{\rho_t\}_{t \in \R_+}$ of measures
as an evolution system of measures.} 
a family $\{\rho_t\}_{t \in \R_+}$ of probability measures on $\M$
is an evolution system of measures indexed by $\R_+$ if
\begin{align}
\int_\M  F(\varphi) \rho_{t_2}(d\varphi)= 
 \int_\M P_{t_1, t_2} F(\varphi) \rho_{t_1}(d\varphi)
 \label{evo1}
\end{align}

\noi
for any bounded continuous function $F$ on $\M$ and $t_2 \ge t_1 \ge 0$.
Note that \eqref{evo1} is equivalent to 
\[ \rho_{t_2} = P^*_{t_1, t_2} \rho_{t_1} \]

\noi
for any $t_2 \ge t_1 \ge 0$.
If there exists an invariant measure $\rho$, 
then by setting $\rho_t = \rho$, $t \in \R_+$, 
the family $\{\rho_t\}_{t\in \R_+}$ is obviously
an evolution system of measures.
It is in this sense that the notion of an evolution system
of measures is a generalization of the notion of an invariant measure.

Given $t \in \R_+$, let  $\mu_{1+t}$
be the white noise of variance $1+t$ defined in \eqref{mu1}.
Then, the following corollary follows from  \eqref{th1} and  the flow property 
\begin{align}
\Phi_{t_1, t_3} = \Phi_{t_2, t_3}\circ \Phi_{t_1, t_2}
\label{flow}
\end{align}

\noi
for 
$t_3 \ge t_2\ge t_1 \ge 0$ of the solution map to SKdV~\eqref{KdV1} constructed in 
Theorem~\ref{THM:1}.

\begin{corollary}\label{COR:x}
Let $\mu_{1+t}$ be the white noise measure with variance $1+t$ 
as in \eqref{mu1}.
Then, the family  $\{ \mu_{1+t}\}_{t\in \R_+}$
is an evolution system of measures
for SKdV \eqref{KdV1} with the white noise initial data $u_0^\o$ in~\eqref{series1}.

\end{corollary}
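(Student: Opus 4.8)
The plan is to deduce Corollary~\ref{COR:x} directly from the identity~\eqref{th1} in Theorem~\ref{THM:1} together with the flow property~\eqref{flow}, so that the argument is essentially a verification of the defining relation~\eqref{evo1} for the family $\{\mu_{1+t}\}_{t \in \R_+}$. The key observation is that the solution map $\Phi_{0,t}$ starting at time $0$ sends the white noise initial data $u_0^\o$ (whose law is $\mu_1$) to a random variable $u(t) = \Phi_{0,t}^\o u_0^\o$ whose law is $\mu_{1+t}$, by~\eqref{th1}. What remains is to upgrade this ``starting from time $0$'' statement to the two-parameter family statement required by~\eqref{evo1}, where we must compare $\rho_{t_2} = \mu_{1+t_2}$ with $P^*_{t_1,t_2}\rho_{t_1} = P^*_{t_1,t_2}\mu_{1+t_1}$ for general $t_2 \ge t_1 \ge 0$.

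First I would fix $t_2 \ge t_1 \ge 0$ and a bounded continuous function $F$ on the phase space $\M = \ft b^s_{p,\infty}(\T)$. Using the flow property~\eqref{flow} with $(t_1,t_2,t_3) = (0,t_1,t_2)$, I would write $\Phi_{0,t_2} = \Phi_{t_1,t_2}\circ \Phi_{0,t_1}$, so that $u(t_2) = \Phi_{t_1,t_2}^\o\big(\Phi_{0,t_1}^\o u_0^\o\big) = \Phi_{t_1,t_2}^\o\big(u(t_1)\big)$. The next step is to exploit the fact that, by~\eqref{th1} applied at time $t_2$,
\begin{align}
\int_\M F(\varphi)\, \mu_{1+t_2}(d\varphi)
= \E\big[ F(u(t_2)) \big]
= \E\big[ F\big(\Phi_{t_1,t_2}^\o (u(t_1))\big)\big].
\label{plan1}
\end{align}
Here the main structural point I would need is that the increments of the noise $W$ on $[t_1,t_2]$ that drive $\Phi_{t_1,t_2}$ are independent of the data $u(t_1)$ produced on $[0,t_1]$; this is where the adaptedness of the constructed solution and the independence of the Wiener increments of a cylindrical Wiener process enter. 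Conditioning on $\mathcal{F}_{t_1}$ and using this independence, the inner expectation over the increments on $[t_1,t_2]$ is exactly $P_{t_1,t_2}F$ evaluated at $u(t_1)$, by the definition~\eqref{trans1}.

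I would then finish by writing, via the tower property,
\begin{align}
\E\big[ F\big(\Phi_{t_1,t_2}^\o (u(t_1))\big)\big]
= \E\big[ (P_{t_1,t_2}F)(u(t_1)) \big]
= \int_\M (P_{t_1,t_2}F)(\varphi)\, \mu_{1+t_1}(d\varphi),
\label{plan2}
\end{align}
where the last equality uses $\Law(u(t_1)) = \mu_{1+t_1}$, again from~\eqref{th1}. Combining~\eqref{plan1} and~\eqref{plan2} yields exactly~\eqref{evo1} with $\rho_t = \mu_{1+t}$, which is the assertion of the corollary.

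The routine parts here are the measure-theoretic bookkeeping and the invocation of~\eqref{th1}. The hard part is really the independence/measurability input underlying the passage from~\eqref{plan1} to~\eqref{plan2}: one must know that the solution map $\Phi_{t_1,t_2}$ depends only on the noise increments on $[t_1,t_2]$ and that these are independent of $\mathcal{F}_{t_1}$, so that the transition semigroup $P_{t_1,t_2}$ defined in~\eqref{trans1} genuinely acts as a conditional expectation given $u(t_1)$. This relies on the pathwise, adapted nature of the solution constructed in Theorem~\ref{THM:1} and on the Markov-type structure inherited from the autonomy of~\eqref{KdV1}; once that is granted, the corollary is a direct consequence. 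I would therefore expect the bulk of any careful write-up to consist of justifying this independence, with the remaining algebra being the short computation above.
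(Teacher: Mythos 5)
Your proposal is correct and follows essentially the same route as the paper, which derives Corollary~\ref{COR:x} precisely from the identity~\eqref{th1} and the flow property~\eqref{flow}, exactly the skeleton you use. The independence/conditioning step you identify as the crux is the same structural input the paper takes as understood (it remarks that the data at time $t_1$ and the stochastic forcing on $[t_1,t_2]$ are independent, justified by~\eqref{white1}), so your write-up simply makes explicit what the paper leaves implicit.
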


Furthermore, we have the following corollary to Theorem \ref{THM:1}.

\begin{corollary}\label{COR:2}
\textup{(i)} Given $\al \ge 0$, let $u_{0, \al}^\o$ be a white noise on $\T$ with variance $\al$
given by 
\begin{align*}
u_{0, \al}^\o(x) = \sqrt{\al}  \sum_{n \in \Z} g_n(\o) e^{inx}, 
\end{align*}

\noi
where $\{g_n \}_{n \in \Z}$ is as in \eqref{series1}.
Then, 
 with probability $1$, 
 there exists   a unique global-in-time solution
 $u$ to~\eqref{KdV1},
 with $u|_{t = 0} = u_{0, \al}^\o$.
   Moreover, for any $t\ge 0$, 
 we have
\begin{align}
\Law (u(t)) = \mu_{\al + t}, 
\label{mu1a}
\end{align}

\noi
where $\mu_{\al + t}$ is as in \eqref{mu1}.
Namely,   
 $u(t)$ is a white noise with variance $\al+t$.

\smallskip

\noi
\textup{(ii)} Let $w_0$ be a deterministic function in $L^2(\T)$ and $\al > 0$.
Then, 
 with probability $1$, 
 there exists   a unique global-in-time solution
 $u$ to~\eqref{KdV1}
 with $u|_{t = 0} = w_0 + \sqrt \al \, u_{0, \al}^\o$, 
where $u_0^\o$ is the white noise on $\T$ with variance $\al$ as in \eqref{series1}.

\end{corollary}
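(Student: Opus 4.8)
The plan is to derive both parts from Theorem~\ref{THM:1}: part (i) by re-running its construction with the variance of the reference measure shifted, and part (ii) by a Cameron--Martin absolute continuity argument that reduces it to part (i).

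For part (i) I would first observe that the value $\al = 1$ plays no distinguished role in the mechanism behind Theorem~\ref{THM:1}. The two ingredients are the local well-posedness of \eqref{KdV1} from \cite{OH4} for data in $\ft b^s_{p,\infty}(\T)$ --- which applies to a white noise $u_{0,\al}^\o$ of any variance $\al \ge 0$, the case $\al = 0$ being the deterministic datum $0$ --- and the fact that $\{\mu_{\al+t}\}_{t \ge 0}$ is an evolution system of measures for the frequency-truncated SKdV dynamics. The latter is a finite-dimensional Gaussian (Liouville-type) computation in which $\al$ merely fixes the variance of the reference measure at $t = 0$; the verification of the transition identity \eqref{evo1} with $\rho_t = \mu_{\al + t}$ is insensitive to the particular value of $\al$. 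Consequently the same limiting/globalization argument produces a global flow with $\Law(u(t)) = \mu_{\al + t}$, which is \eqref{mu1a}. (For $\al \ge 1$ one can shortcut this using only the \emph{statement} of Theorem~\ref{THM:1}: by the time-stationarity of the space-time white noise $\xi$ and the Markov property of \eqref{KdV1}, the time-translate $t \mapsto v(t + \al - 1)$ of the Theorem~\ref{THM:1} solution $v$ has initial law $\mu_{1 + (\al - 1)} = \mu_\al$ and running law $\mu_{\al + t}$; this recovers (i) for $\al \ge 1$ directly, but not for $\al \in [0,1)$ --- the measures $\mu_\al$ being mutually singular for distinct $\al$ --- so I would keep the ``same construction'' route for the general statement.)

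For part (ii) the key point is that an $L^2$-shift of a white noise of positive variance is absolutely continuous with respect to the unshifted white noise. Writing the datum as $w_0 + \phi$ with $\phi$ distributed according to $\mu_\al$, $\al > 0$, I recall that the Cameron--Martin space of $\mu_\al$ is exactly $L^2(\T)$ (white noise is the standard Gaussian on $L^2$, and scaling the variance by $\al$ only rescales the Cameron--Martin inner product); hence $w_0 \in L^2(\T)$ lies in the Cameron--Martin space, and by the Cameron--Martin theorem the push-forward $(T_{w_0})_*\mu_\al$ under the shift $T_{w_0}\phi = w_0 + \phi$, i.e.\ $\Law(w_0 + \phi)$, is mutually absolutely continuous with $\mu_\al$. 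Part (i) provides, for $\PP$-a.e.\ realization of the forcing, a set $G_\o \subset \ft b^s_{p,\infty}(\T)$ of full $\mu_\al$-measure on which the global flow of \eqref{KdV1} is defined; since $w_0 + \phi \in \ft b^s_{p,\infty}(\T)$ almost surely (as $L^2(\T) \hookrightarrow \ft b^s_{p,\infty}(\T)$) and $\Law(w_0 + \phi) \ll \mu_\al$, the set $G_\o$ also has full measure under $\Law(w_0 + \phi)$. Therefore the global solution with datum $w_0 + \phi$ exists almost surely, and uniqueness follows from the local theory as before.

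The main obstacle is conceptual rather than computational, and it sits in part (ii): one must ensure that the notion of ``good'' initial datum furnished by part (i) --- the set $G_\o$ on which the limit defining the flow exists --- is intrinsic to the pair (datum, forcing) and does not depend on the measure used to sample the datum, so that full $\mu_\al$-measure genuinely transfers to full $\Law(w_0 + \phi)$-measure via absolute continuity. This is precisely the structural feature of Bourgain's argument (the good set being cut out by uniform bounds along the truncations), and it must be recorded carefully, together with the Fubini step exchanging the datum and the forcing. A secondary point is the identification of the Cameron--Martin space of $\mu_\al$ with $L^2(\T)$; the hypothesis $\al > 0$ is essential here, since for $\al = 0$ one has $\mu_0 = \dl_0$ and no such absolute continuity holds under an $L^2$-shift.
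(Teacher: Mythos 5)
Your proposal is correct and follows essentially the same route as the paper: part (i) by re-running the construction of Theorem~\ref{THM:1} with $\mu_{\al+t}$ in place of $\mu_{1+t}$ (the paper's Remark~\ref{REM:finite} records exactly the required modification of Proposition~\ref{PROP:finite}), together with the flow-property/time-translation observation, and part (ii) by the Cameron--Martin theorem applied to $\mu_\al$, whose Cameron--Martin space is $L^2(\T)$. Your caveats --- that the time-translation shortcut only covers $\al \ge 1$, and that the full-$\mu_\al$-measure set of good data must transfer to $\Law(w_0+\phi)$ via absolute continuity for a.e.\ realization of the forcing --- are consistent with, and slightly more explicit than, the paper's brief indications.
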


Part (i) of Corollary \ref{COR:2} directly follows from Theorem \ref{THM:1}
together with the flow property~\eqref{flow}
and 
 the time translation invariance (in law) of SKdV \eqref{KdV1}.
 See also Remark~\ref{REM:finite}.
Part (ii) of Corollary~\ref{COR:2} follows from Corollary \ref{COR:2}\,(i)
and the Cameron-Martin theorem~\cite{CM} by noting that $L^2(\T)$ is the Cameron-Martin space of $\mu_\al = \Law (\sqrt \al u_0^\o)$.
See \cite{OQ} for a further discussion.

\medskip

Thanks to the time reversibility of the KdV equation, 
Theorem \ref{THM:1} and Corollary \ref{COR:2}
also hold for negative times (where the variances $1+t$ in \eqref{th1}
and $\al+t$ in \eqref{mu1a}
are replaced by $1+|t|$ and $\al+|t|$, respectively.
For simplicity of the presentation, however, we only consider positive times in the remaining part 
of the paper.
Moreover, in the following discussion, 
in considering a stochastic flow on a time interval $[t_1, t_2]$, 
it is understood that random initial data at time $t_1$
 and a stochastic forcing on $[t_1, t_2]$ are independent
(which is justified by \eqref{white1}).


\subsection{Outline of the proof}

Let us now describe some aspects of the proof of Theorem~\ref{THM:1}.
Except in the small data regime (including 
a small perturbation of a known global solution), 
one usually needs to exploit conservation laws in order to construct global-in-time solutions
to nonlinear dispersive PDEs.
A remarkable intuition by Bourgain in \cite{BO2}
was to use (formal) invariance of a Gibbs measure 
as a replacement of a conservation law
to construct global-in-time solutions with the Gibbsian initial data.
More precisely, he used the rigorous invariance of the truncated Gibbs measures
for the associated truncated dynamics
and combined it with a PDE approximation argument to 
construct the desired global-in-time invariant Gibbs dynamics.
This argument, known as Bourgain's invariant measure argument,  has been applied to many dispersive PDEs
with random initial data (and stochastic forcing), in particular
over the last fifteen years.
See the survey papers \cite{OH6, BOP4, Tz1}  for a further discussion on this topic and the references therein.
See also 
\cite{GKOT, OOT1, OOT2} for more recent results in the context 
of stochastic dispersive PDEs.
We point out that Bourgain's invariant measure argument has also been 
applied
to globalize solutions to stochastic parabolic PDEs; see, 
for example, 
\cite{HM, ORW, ORTW}.

In the current problem at hand, due to the lack of a damping term, 
there is no invariant measure for SKdV \eqref{KdV1}, 
and thus Bourgain's invariant measure argument is not applicable.
It is, however, easy to see, at a formal level, (as explained below)
that SKdV \eqref{KdV1} with the white noise initial data \eqref{series1} possesses
a (formal) evolution system of measures $\{ \mu_{1+t}\}_{t \in \R_+}$, 
where $\mu_{1+t}$ is a white noise measure with variance $1+t$
defined in \eqref{mu1}.
See also Proposition~\ref{PROP:finite}.
Our main strategy is then to {\it use this (formal) evolution system of measures
$\{ \mu_{1+t}\}_{t \in \R_+}$ 
as a replacement of a (formal) invariant measure
in Bourgain's invariant measure argument} (and hence as a replacement of a conservation law
in the deterministic setting).

Before proceeding further, 
let us provide a heuristic argument for the claim 
that $\{\mu_{1+t}\}_{t \in \R_+}$ is an evolution system of measures
for SKdV \eqref{KdV1} with the white noise initial data.
First, 
view the SKdV dynamics \eqref{KdV1}
as a superposition of the deterministic KdV~\eqref{KDV}
and 
\begin{align}
\dt  u = \xi
\label{KdV3a}
\end{align}

\noi
(at the level of infinitesimal generators).
On the one hand, the white noise (with any variance) is known 
to be invariant under the flow of the deterministic KdV \eqref{KDV};
see  \cite{QV, OH1, OH6, OQV, KMV}.
On the other hand, 
the stochastic flow \eqref{KdV3a} with a white noise initial data (with any variance)
increases the variance by the length of the time interval under consideration.
Then, the claim follows, at least at a purely formal level, 
from these observations 
 together with 
the 
Lie-Trotter product formula \cite[Section VIII.8]{RS}:
\begin{align}
e^{t (A+ B)} = \lim_{n \to \infty} \big[ e^{\frac tn A}e^{\frac tn B}\big]^n
\label{tro}
\end{align}

\noi
(which holds, for example, 
for finite-dimensional matrices $A, B$).
We point out that 
the Lie-Trotter product formula \eqref{tro}
is not directly applicable to our problem, 
and the core of the proof of Theorem \ref{THM:1}
consists of justifying this heuristic argument
by  an approximation argument,
which we explain next.

\medskip

\noi
$\bullet$ {\bf Truncated SKdV dynamics.}
Given $N \in \NB$, 
let 
$\P_{N}$ denotes the Dirichlet projection on (spatial) frequencies $\{|n|\le N\}$.
Then, consider the following truncated SKdV equation: 
\begin{align}
\begin{cases}
\dt u^N + \dx^3 u^N + \P_{ N}(\P_{ N} u^N \cdot \dx \P_{ N} u^N) =  \xi\\
u^N|_{t = 0} =  u_0^\o, 
\end{cases}
\label{KdV4}
\end{align}

\noi
where $u_0^\o$ is the white noise given in \eqref{series1}.
Note that the truncation appears only on the nonlinearity, but not
on the noise or the initial data.
With $\P_{N}^\perp = \Id - \P_{ N}$, 
set 
\begin{align*}
 u_N = \P_{N} u^N
 \qquad \text{and} \qquad u_N^\perp = \P_N^\perp u^N.
\end{align*}

\noi
 Then, the truncated SKdV dynamics \eqref{KdV4}
 decouples into 
 the finite-dimensional nonlinear dynamics for the low frequency part $u_N = \P_{N} u^N$:
\begin{align}
\begin{cases}
\dt u_N + \dx^3 u_N + \P_{N}(u_N \dx u_N) = \P_{N} \xi\\
u_N|_{t = 0} = \P_{N} u_0^\o, 
\end{cases}
\label{KdV5}
\end{align}

\noi
and the linear dynamics for the high frequency part 
$u_N^\perp = \P_N^\perp u^N$:
\begin{align}
\begin{cases}
\dt u_N^\perp + \dx^3 u_N^\perp  = \P_{N}^\perp \xi\\
u_N^\perp |_{t = 0} = \P_{N}^\perp u_0^\o.
\end{cases}
\label{KdV6}
\end{align}

\noi
It is easy to see that both \eqref{KdV5} and \eqref{KdV6}
are globally well-posed (which implies that \eqref{KdV4} is globally well-posed); see Section \ref{SEC:finite}.
For $t_2 \ge t_1 \ge 0$,
we denote by 
 $\Phi^{N, \low}_{t_1, t_2}$
 and $\Phi^{N, \high}_{t_1, t_2}$  
 the solution maps for \eqref{KdV5} and \eqref{KdV6}
sending data $\varphi$  at time $t_1$ to the solutions 
$\Phi^{N, \low}_{t_1, t_2}\varphi$
and $\Phi^{N, \high}_{t_1, t_2}\varphi$ at time $t_2$.
We let $P^{N,\low}_{t_1, t_2}$
and $P^{N,\high}_{t_1, t_2}$ denote the transition semigroups
for \eqref{KdV5} and \eqref{KdV6}, respectively, 
defined as in \eqref{trans1}, where the expectation 
is taken over the noise restricted to the time interval $[t_1, t_2]$.
We also use 
$\Phi^{N}_{t_1, t_2}$
and $P^{N}_{t_1, t_2}$
to denote the solution map and the transition semigroup for the truncated SKdV \eqref{KdV4}.

Given $\al \ge 0$, let $\mu_\al$ be the white noise measure (with variance $\al$)
as in \eqref{mu1}.
Then, we can write $\mu_\al$ as
\begin{align}
\begin{split}
 \mu_\al 
 & =  \mu_\al^{N, \low} \otimes \mu_\al^{N, \high}\\
&  = 
 (\P_N)_* \mu_\al \otimes (\P_N^\perp)_* \mu_\al, 
\end{split}
 \label{mu2}
\end{align}

\noi
where $\mu_\al^{N, \low} = (\P_N)_* \mu_\al$ and 
$\mu_\al^{N, \high} = (\P_N^\perp)_* \mu_\al$
the pushforward image measures of $\mu_\al$ under
$\P_N$ and $\P_N^\perp$, respectively.
Note that 
$\mu_\al^{N, \low}$ and $\mu_\al^{N, \high}$
are nothing but the white noise measures (with variance $\al$) on $E_N = \text{span}\{e^{inx}: |n| \le N\}$
and $E_N^\perp = \text{span}\{e^{inx}: |n| >  N\}$, 
respectively, 
where the latter span is taken over the
space $\D'(\T)$ of distributions on $\T$.

The high frequency dynamics \eqref{KdV5}
is linear and it is easy to verify 
that 
\begin{align}
(P^{N, \high}_{t_1, t_2})^* \mu^{N, \high}_{1+t_1}
=   \mu^{N, \high}_{1+t_2 }.
\label{mu3}
\end{align}

\noi
By writing it on the Fourier side, 
we see that the low frequency dynamics \eqref{KdV5}
is nothing but a finite-dimensional system of SDEs,
which can be viewed as the superposition of 
the finite-dimensional KdV dynamics:
\begin{align}
\dt u_N + \dx^3 u_N + \P_{N}(u_N \dx u_N) =0
\label{KdV7}
\end{align}

\noi
and the linear stochastic dynamics:
\begin{align}
\dt u_N = \P_{N} \xi.
\label{KdV8}
\end{align}

\noi
While the former \eqref{KdV7} preserves
the white noise $\mu_\al^{N, \low}$ (with any variance), 
the latter~\eqref{KdV8}
increases the variance of the white noise initial data by the length of the time interval under consideration.
Then, in view of the Lie-Trotter product formula \eqref{tro}, 
we see that 
\begin{align}
(P^{N, \low}_{t_1, t_2})^* \mu^{N, \low}_{1+t_1}
=   \mu^{N, \low}_{1+t_2 }.
\label{mu4}
\end{align}

\noi
Putting \eqref{mu3} and \eqref{mu4} together, 
we then obtain the following proposition.

\begin{proposition}\label{PROP:finite}
Let $N \in \NB$. 
Then, for any $t_2 \ge t_1 \ge 0$, we have 
\begin{align*}
(P^{N}_{t_1, t_2})^* \mu_{1+t_1}
=   \mu_{1+t_2 }, 
\end{align*}

\noi
where 
  $P^{N}_{t_1, t_2}$
is  the transition semigroup for the truncated SKdV \eqref{KdV4}.
Namely, $\{\mu_{1+t}\}_{t \in \R_+}$
is an evolution system of measures
for the truncated SKdV \eqref{KdV4}.
\end{proposition}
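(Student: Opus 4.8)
The plan is to reduce Proposition~\ref{PROP:finite} to the two decoupled identities \eqref{mu3} and \eqref{mu4} and then recombine them through the product structure \eqref{mu2}. Since the high- and low-frequency parts of the truncated dynamics \eqref{KdV4} evolve independently and the white noise measure factorizes as $\mu_\al = \mu_\al^{N,\low}\otimes\mu_\al^{N,\high}$, the transition semigroup factorizes accordingly, so that $(P^{N}_{t_1,t_2})^*\mu_{1+t_1} = \big((P^{N,\low}_{t_1,t_2})^*\mu^{N,\low}_{1+t_1}\big)\otimes\big((P^{N,\high}_{t_1,t_2})^*\mu^{N,\high}_{1+t_1}\big)$, and the claim follows once \eqref{mu3} and \eqref{mu4} are established. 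The high-frequency identity \eqref{mu3} I would verify by an explicit Gaussian computation: the linear dynamics \eqref{KdV6} is diagonal on the Fourier side, with $\ft u(n,t_2) = e^{in^3(t_2-t_1)}\ft u(n,t_1) + \int_{t_1}^{t_2} e^{in^3(t_2-s)}\,d\be_n(s)$ for $|n|>N$; if $\ft u(n,t_1)$ is a centered complex Gaussian of variance $1+t_1$, independent of the forcing on $[t_1,t_2]$, then $\ft u(n,t_2)$ is again centered Gaussian with variance $(1+t_1) + (t_2-t_1) = 1+t_2$, and the modes remain independent.

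The core of the proof is \eqref{mu4}, where the Lie-Trotter heuristic \eqref{tro} has to be made rigorous for the finite-dimensional SDE \eqref{KdV5}. Rather than invoking \eqref{tro}, I would work with the generator $\L^{N,\low}$ of \eqref{KdV5} and split it as $\L^{N,\low} = \L^{N,\low}_{\textup{KdV}} + \L^{N,\low}_{\textup{st}}$, where $\L^{N,\low}_{\textup{KdV}}$ is the first-order transport field of the truncated KdV flow \eqref{KdV7} and $\L^{N,\low}_{\textup{st}} = \tfrac12\Delta$ is the diffusion coming from the forcing \eqref{KdV8}, in the real coordinates in which $\P_N\xi$ is a standard Brownian motion. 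I would then record three facts: (i) infinitesimal invariance of the white noise under the truncated KdV flow, $\int \L^{N,\low}_{\textup{KdV}}H\,d\mu^{N,\low}_\al = 0$ for all $\al\ge 0$ and test functions $H$, which follows from the conservation of $\|\P_N u\|_{L^2}^2$ along \eqref{KdV7} together with the divergence-free (Liouville) property of the truncated nonlinearity; (ii) the Gaussian heat identity $\frac{d}{ds}\int H\,d\mu^{N,\low}_{1+s} = \int \L^{N,\low}_{\textup{st}}H\,d\mu^{N,\low}_{1+s}$, reflecting that $\al\mapsto\mu^{N,\low}_\al$, a family of centered Gaussians of covariance $\al\,\Id$, solves $\partial_\al\rho_\al = \tfrac12\Delta\rho_\al$; and (iii) the backward Kolmogorov equation $\partial_s P^{N,\low}_{s,t_2}F = -\L^{N,\low}P^{N,\low}_{s,t_2}F$.

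Granting (i)--(iii), I would set $G(s) = \int (P^{N,\low}_{s,t_2}F)\,d\mu^{N,\low}_{1+s}$ for $s\in[t_1,t_2]$ and differentiate, using (iii) for the integrand and (ii) for the measure:
\begin{align*}
G'(s)
&= -\int \L^{N,\low}(P^{N,\low}_{s,t_2}F)\,d\mu^{N,\low}_{1+s}
+ \int \L^{N,\low}_{\textup{st}}(P^{N,\low}_{s,t_2}F)\,d\mu^{N,\low}_{1+s} \\
&= -\int \L^{N,\low}_{\textup{KdV}}(P^{N,\low}_{s,t_2}F)\,d\mu^{N,\low}_{1+s}
= 0
\end{align*}
by (i). Hence $G(t_1) = G(t_2)$, which reads $\int P^{N,\low}_{t_1,t_2}F\,d\mu^{N,\low}_{1+t_1} = \int F\,d\mu^{N,\low}_{1+t_2}$; since $F$ is arbitrary, this is exactly \eqref{mu4}. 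Combining \eqref{mu3} and \eqref{mu4} with the tensorization above yields the proposition.

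I expect the main obstacle to be twofold. Analytically, the delicate point is ingredient (i): one must check that the Galerkin-truncated KdV nonlinearity $\P_N(u_N\dx u_N)$ is simultaneously energy-preserving and divergence-free, so that the Gaussian weight $\exp(-\|\P_N u\|_{L^2}^2/2\al)\,du$ is genuinely invariant under \eqref{KdV7}; this is the finite-dimensional counterpart of the known invariance of the white noise under KdV. On the technical side, because the drift of \eqref{KdV5} grows quadratically, I would need polynomial-growth bounds on $P^{N,\low}_{s,t_2}F$ and its derivatives, together with the Gaussian moment bounds furnished by $\mu^{N,\low}_{1+s}$, in order to justify the differentiations in (ii)--(iii) and the interchange of derivative and integral in the computation of $G'(s)$; restricting $F$ to a core of smooth cylinder functions and extending by density should suffice.
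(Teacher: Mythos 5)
Your proposal is correct, and its outer structure (the factorization \eqref{mu2} of the white noise measure, the reduction to the high-frequency identity \eqref{mu3} and the low-frequency identity \eqref{mu4}, and the explicit Gaussian computation for the linear high-frequency flow) coincides with the paper's. Where you genuinely differ is in the proof of \eqref{mu4}. The paper works with the Kolmogorov \emph{forward} (Fokker--Planck) equation \eqref{IVP}: it derives the equation $\partial_t f - \tfrac14\Delta_{\bar x} f + A(\bar x)\cdot\nabla_{\bar x} f = 0$ for the density of the law of the Fourier coefficients of $u_N$ (the divergence-free property \eqref{pq5} enters in computing the formal adjoint of the generator), then verifies in Lemma \ref{LEM:stat} that the explicit Gaussian density $\prod_{n}\gamma_{1+t}(x_n)$ solves it --- the heat part is the identity $\partial_t\gamma_{1+t}=\tfrac14\partial_{x_n}^2\gamma_{1+t}$, and the transport part vanishes because $A(\bar x)\cdot \bar x = \tfrac12\int_\T \P_N(u_N\dx u_N)\, u_N\, dx = \tfrac16\int_\T \dx(u_N^3)\, dx=0$ --- and finally invokes a classical uniqueness theorem for \eqref{IVP}. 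You instead run the dual, backward route: split the generator, prove infinitesimal invariance of $\mu_\al^{N,\low}$ under the transport part, the heat identity for $s\mapsto \mu^{N,\low}_{1+s}$, and the backward Kolmogorov equation, and then differentiate $G(s)=\int P^{N,\low}_{s,t_2}F\, d\mu^{N,\low}_{1+s}$ to conclude $G'\equiv 0$. The two arguments rest on exactly the same structural facts --- your ingredient (i), unwound by integration by parts against the Gaussian density, is precisely $\operatorname{div} A = 0$ together with $A\cdot\bar x = 0$, the identities the paper checks --- but they distribute the technical burden differently: the paper outsources the analysis to the classical uniqueness theory for the forward equation, while you must justify differentiability of $P^{N,\low}_{s,t_2}F$ and the interchange of $d/ds$ with the Gaussian integral, which for a quadratic drift requires the moment and growth bounds you correctly flag; in exchange, your version makes the Lie--Trotter heuristic \eqref{tro} from the introduction rigorous directly at the level of generators. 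One omission worth repairing: before any of this, the paper establishes global well-posedness of the truncated low-frequency SDE \eqref{KdV5} (Lemma \ref{LEM:LWP}, via a contraction argument and an It\^o-formula a priori $L^2$ bound); this is what makes the transition semigroup well defined in the first place and supplies the solution moment bounds your technical step presupposes, so you should include or cite that (elementary) step.
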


We present the proof of Proposition \ref{PROP:finite} in 
Section \ref{SEC:finite}.
As for the low frequency part of the claim, 
instead of decomposing the low frequency dynamics \eqref{KdV5} into \eqref{KdV7}
and  \eqref{KdV8}
and applying
the Lie-Trotter product formula \eqref{tro},
we verify  \eqref{mu4}
by 
directly showing that $\mu^{N, \low}_{1+t}$
is the unique solution to 
the Kolmogorov forward equation
(= the Fokker-Planck equation).

\begin{remark}\label{REM:finite} \rm
Let $\al \ge 0$. A straightforward modification of the proof of Proposition \ref{PROP:finite}
yields
\begin{align*}
(P^{N}_{t_1, t_2})^* \mu_{\al}
=   \mu_{\al + (t_2-t_1) }, 
\end{align*}

\noi
which is the key ingredient for proving Corollary \ref{COR:2}\,(i), 
replacing Proposition \ref{PROP:finite}.

\end{remark}

Once we obtain 
Proposition \ref{PROP:finite}, 
we use ideas from Bourgain's invariant measure argument \cite{BO2}
together with the nonlinear analysis in \cite{OH4},
and establish a probabilistic uniform (in $N$) growth bound 
on the solutions to the truncated SKdV \eqref{KdV4}.
See Proposition~\ref{PROP:main}.
Finally, 
Theorem \ref{THM:1}
follows from a PDE approximation argument
and this
probabilistic uniform  growth bound.
See Section \ref{SEC:5}.

\medskip

\smallskip

\noi
$\bullet$ {\bf Mean-zero assumption:}
Recall that 
 the bilinear estimate  \eqref{bilin1}
 holds only for 
 (spatial) mean-zero functions, 
 namely, 
 the spatial means
  of $u(t)$ and $v(t)$ are zero for any $t \in \R$.
In the case of the deterministic KdV \eqref{KDV}, 
if initial data $u_0$ has non-zero mean $\al_0$, 
then 
the following  Galilean transformation:
\[u(x, t) \longmapsto u(x +\alpha_0 t, t) - \alpha_0\]

\noi
as in \cite{CKSTT5} together with the conservation of the (spatial) mean
under KdV
 transforms KdV with a non-zero mean
into the mean-zero KdV (so that the bilinear estimate \eqref{bilin1}
is applicable).
In the case of SKdV with an additive noise, 
the spatial mean of a solution is no longer conserved.
Nonetheless, 
in \cite{ddt-1, OH4}, 
a similar transformation was employed to reduce  SKdV  with 
an additive noise
to the mean-zero case.
The transformation in this case 
depends not only on the mean of the initial condition
but also on the Brownian motion $\beta_0$ at the zeroth frequency
in \eqref{W1}.
See \cite{ddt-1, OH4} for details.

For conciseness of the presentation, 
we impose the following mean-zero assumption in the remaining part of the paper.

\begin{itemize}
\item We assume that the white noise initial data $u_0^\o$ in \eqref{series1}
and the space-time white noise $\xi$ in \eqref{KdV1} and \eqref{KdV4}
have spatial mean-zero.
This means that the random initial data is now given by 
\begin{align}
u_0^\o(x) = \sum_{n \in \Z_*} g_n(\o) e^{inx}, 
\label{series4}
\end{align}

\noi
where $\Z_* = \Z\setminus \{0\}$, 
and the stochastic forcing $\xi$ is given by 
the distributional time derivative of 
\begin{align}
W(t)
 = \sum_{n \in \Z_*} \be_n (t) e^{inx}.
\label{W2}
\end{align}

\noi
Namely, we have $\xi = \P_{\ne 0}\xi$, 
where $\P_{\ne 0}$ is the projection onto
the non-zero (spatial) frequencies.
This assumption together with the presence of the derivative on
the nonlinearity $u \dx u = \frac 12 \dx u^2$
implies that 
a solution $u$ to SKdV \eqref{KdV1}
has spatial mean zero as long as it exists.

\end{itemize}

\smallskip

\noi
It is understood that all the functions/distributions
have spatial mean zero in the following.
The required modifications 
to handle the general case (i.e.~with the white noise $u_0^\o$
in \eqref{series1} and the space-time white noise $\xi$ without
the projection $\P_{\ne 0}$) 
are straightforward and hence we omit details.
See \cite{OH4} for details.

\medskip

We conclude this introduction by stating several remarks.

\begin{remark}\rm
The usual application of Bourgain's invariant measure argument
provides a   growth bound\footnote{At least in the setting of \cite{BO2}.
In the singular setting, we have a growth bound
by a suitable power of $\log t$.
See, for example,  Section 5 in \cite{ORTz}.}
 on a solution
by  $\sqrt{\log t}$ for $t \gg1$, where the implicit constant is random.
In the current SKdV problem, 
we instead obtain a growth bound on a solution 
by (something slightly faster than)
$\sqrt{ t\log t}$ for $t \gg1$, where the extra factor $\sqrt t$
comes from the fact that the variance of the white noise at time $t$ grows like $\sim t$.
 See Remark \ref{REM:bound}.

\end{remark}

\begin{remark}\rm
(i) 
As mentioned above, 
by applying the $I$-method, Colliander, Keel, Staffilani, Takaoka, and Tao 
\cite{CKSTT4} proved global well-posedness 
of  the deterministic KdV \eqref{KDV}
in $H^{-\frac 12}(\T)$.
It would be of interest to apply the $I$-method to 
study global well-posedness of SKdV \eqref{KdV1}
with general deterministic initial data.
In \cite{CLO}, the first author with Cheung and Li 
adapted the $I$-method to the stochastic setting
and proved global well-posedness, below the energy space, of 
the stochastic nonlinear Schr\"odinger equation (SNLS) on $\R^3$
 with additive stochastic forcing, white in time and correlated in space.
On the one hand, the $I$-method is suitable 
for controlling an $L^2$-based Sobolev norm.
On the other hand, 
 the only known local well-posedness result
of SKdV \eqref{KdV1} is in the Fourier-Besov space
$\ft b^s_{p, \infty}$ (at this point), and thus there is a non-trivial difficulty
in adapting the $I$-method to this problem.

\medskip

\noi
(ii) In \cite{KVZ}, Killip, Vi\c{s}an, and Zhang
exploited the complete integrable structure
of the deterministic KdV \eqref{KDV}
and established a global-in-time  a priori bound
for solutions to KdV in $H^s(\T)$, $s \ge -1$.
This a priori bound was given by 
a sum of suitable rescaled
perturbation determinants,
(each of which is given as an infinite series).
It would also be of interest to investigate
if their approach can be adapted to the current stochastic setting
(and moreover to the Fourier-Besov setting, using the ideas in \cite{OW2}).

\end{remark}

\begin{remark}\rm

Consider the following SNLS
on~$\T$:
\begin{align}
i \dt u - \dx^2 u + |u|^2 u  = \xi,
\label{NLS1}
\end{align}

\noi
where $\xi$ is a complex-valued space-time white  noise on $\T\times \R_+$, 
with the complex-valued white noise initial data:
\begin{align}
u_0^\o(x) = \sum_{n \in \Z} g_n(\o) e^{inx}, 
\label{series3}
\end{align}

\noi
where $\{g_n \}_{n \in \Z}$ is a family of independent standard 
complex-valued Gaussian random variables.
(Here, we do not impose the condition 
$g_{-n} = \cj{g_n}$.)
Due to the low regularity of the initial data and the forcing, 
we need to renormalize the nonlinearity in \eqref{NLS1}
to that considered in \cite{Christ1, GO, FOW, OW}.
In the following discussion, 
we suppress this renormalization issue.

Let us first consider the (deterministic)  nonlinear Schr\"odinger equation (NLS)
on~$\T$:
\begin{align}
i \dt u - \dx^2 u + |u|^2 u  = 0.
\label{NLS2}
\end{align}

\noi
Given $\al > 0$, let $\mu_\al = \Law(\sqrt \al\, u_0^\o)$ 
with $u_0^\o$ as in \eqref{series3} be
the (complex) white noise measure with variance $\al$.
Formally, we have
\begin{align*}
d \mu_\al = Z_\al^{-1} e^{-\frac 1{2\al} \int_\T |u|^2 dx } du.
\end{align*}

\noi
See \cite{OH6, OQV}.
Then, in view of the conservation of the $L^2$-norm under \eqref{NLS2}
and the fact that NLS \eqref{NLS2} is Hamiltonian, 
we expect that the white noise measure $\mu_\al$ is invariant under the NLS dynamics.
In \cite{OQV}, 
the first two authors with Valk\'o
proved formal invariance of the white noise measure under NLS \eqref{NLS2}
in the sense that the white noise measure is 
a weak limit of invariant measures for NLS \eqref{NLS2}.
In the same paper, 
they also conjectured invariance of the white noise under NLS \eqref{NLS2}.
This conjecture remains as a challenging open problem to date, in particular
due to the critical nature of the well-posedness issue
for~\eqref{NLS2} (and also for~\eqref{NLS1}) with white noise initial data;
see \cite{FOW, DNY2}.
See also \cite{OTzW}
for invariance of the white noise measure
under the fourth order NLS on $\T$, 
where $-\dx^2$ in~\eqref{NLS2} is replaced by $(-\dx^2)^2$.

Let us now turn our attention to SNLS \eqref{NLS1}.
As in the SKdV case, by viewing \eqref{NLS1}
as a superposition of the deterministic NLS \eqref{NLS2}
and the stochastic flow $i \dt u = \xi$
together with the conjectural invariance of the white noise under NLS \eqref{NLS2}, 
we arrive at the following conjecture.

\begin{conjecture}\label{CONJ:3}
The family  $\{ \mu_{1+t}\}_{t\in \R_+}$ of the white noise measures with variance $1+ t$
is an evolution system of measures
for SNLS \eqref{NLS1} with the white noise initial data $u_0^\o$ in~\eqref{series3}.

\end{conjecture}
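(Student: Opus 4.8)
The plan is to transpose the proof of Theorem~\ref{THM:1} to the SNLS setting, replacing each KdV ingredient by its cubic Schrödinger analogue. First I would introduce the truncated (renormalized) SNLS obtained by inserting the Dirichlet projection $\P_N$ into the nonlinearity, which decouples the flow into a finite-dimensional nonlinear system for the low frequencies $\P_N u$ and a linear stochastic system for the high frequencies $\P_N^\perp u$, exactly as in~\eqref{KdV5}--\eqref{KdV6}. On the high-frequency side the flow is the linear Schrödinger evolution $i\dt u_N^\perp - \dx^2 u_N^\perp = \P_N^\perp \xi$: the linear propagator acts on each Fourier mode by multiplication by a unimodular phase and hence preserves the (rotationally invariant) complex white noise measure, while the additive noise increases the variance by the elapsed time, giving the analogue of~\eqref{mu3}. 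On the low-frequency side, the truncated renormalized deterministic NLS is a finite-dimensional Hamiltonian system that conserves the $L^2$-norm (the Wick renormalization only shifts the Hamiltonian by a multiple of the conserved mass); by Liouville's theorem together with this conservation, the truncated complex white noise measure $\mu_\al^{N,\low} = Z_{\al,N}^{-1} e^{-\frac{1}{2\al}\int_\T |\P_N u|^2 dx}\, d(\P_N u)$ is invariant. Combining the two observations, either via the Lie-Trotter product formula~\eqref{tro} or, more robustly, by checking that $\mu_{1+t}^{N,\low}$ solves the associated Fokker-Planck equation, would yield the exact analogue of Proposition~\ref{PROP:finite} for the truncated SNLS. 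This finite-dimensional step is rigorous and presents no essential difficulty.

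The remaining two steps would mirror the SKdV argument. I would next seek a probabilistic, uniform-in-$N$ growth bound on the truncated solutions, in the spirit of Proposition~\ref{PROP:main}, using the evolution system of measures $\{\mu_{1+t}^N\}$ as a substitute for a conservation law inside Bourgain's argument. Finally, a PDE approximation argument letting $N\to\infty$ would produce the limiting global-in-time dynamics and transfer the identity $(P^N_{t_1,t_2})^*\mu_{1+t_1} = \mu_{1+t_2}$ to the full flow, yielding $\Law(u(t)) = \mu_{1+t}$ and hence the conjecture.

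The hard part---and the reason this statement remains a conjecture---lies entirely in these last two steps and, underneath them, in the \emph{deterministic} input. Whereas for SKdV the derivative in the nonlinearity produces a genuine gain that the Fourier-Besov space $\ft b^s_{p,\infty}(\T)$ converts into a local well-posedness theory at the white-noise regularity $s<-\frac12$, the cubic nonlinearity in~\eqref{NLS1} offers no such smoothing and white noise data sits at a critical/supercritical threshold. Indeed, even the invariance of $\mu_\al$ under the deterministic NLS~\eqref{NLS2}---which is the natural replacement for the KdV invariance used in the heuristic---is only known in the weak limiting sense of~\cite{OQV} and is itself an open conjecture. Consequently there is at present no local (let alone global) well-posedness theory for the renormalized~\eqref{NLS1} with white noise initial data, so one cannot even define the limiting flow $\Phi_{t_1,t_2}$, and the approximation argument has no target to converge to. A genuine proof would therefore require, as prerequisites, both a resolution of the invariance problem for~\eqref{NLS2} and a new low-regularity well-posedness theory for~\eqref{NLS1}---presumably built on renormalization together with para-controlled or random-tensor techniques (cf.~\cite{FOW, DNY2})---after which the scheme above could be run; absent these, the finite-dimensional analogue of Proposition~\ref{PROP:finite} is the only rigorously accessible component of the program.
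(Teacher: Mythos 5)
This statement is left as an open conjecture in the paper, which offers no proof of it, and your proposal correctly refrains from claiming one. Your assessment coincides exactly with the paper's own discussion following Conjecture~\ref{CONJ:3}: the truncated-SNLS analogue of Proposition~\ref{PROP:finite} is the only rigorously accessible component (high frequencies: unimodular phases preserve the rotation-invariant complex white noise while the additive noise increases the variance; low frequencies: an $L^2$-conserving, divergence-free vector field leaves the Gaussian density invariant, so $\mu_{1+t}^{N,\low}$ solves the Fokker--Planck equation), and the genuine obstruction is precisely the one you name---the absence of a local well-posedness theory for the (renormalized) equation \eqref{NLS1} with white noise initial data, compounded by the fact that invariance of white noise under the deterministic NLS \eqref{NLS2} is itself only known in the weak limiting sense of \cite{OQV}.
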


This conjecture is of importance not only from the viewpoint
of mathematical analysis but also from the viewpoint of applications
due to the importance of SNLS \eqref{NLS1} (and NLS~\eqref{NLS2})
in nonlinear fiber optics.
A straightforward modification of the proof of Proposition \ref{PROP:finite}
shows that, 
for any $N \in \NB$, 
the family $\{ \mu_{1+t}\}_{t\in \R_+}$
is an evolution system of measure 
for the following truncated SNLS:
\begin{align*}
i \dt u^N - \dx^2 u^N + \P_N(|\P_N u^N|^2 \P_Nu^N)  = \xi
\end{align*}

\noi
with the white noise initial data $u_0^\o$ in~\eqref{series3}.
The main obstacle for proving Conjecture~\ref{CONJ:3}
is the local well-posedness issue as in the case of NLS \eqref{NLS2}
with the white noise initial data.

\end{remark}

\section{Finite-dimensional approximations and their distributions}
\label{SEC:finite}

In the remaining part of the paper, we work on a probability space $(\Omega,\mathcal{F},\PP)$ supporting
\begin{itemize}
\item A family $\{g_n\}_{n \in \NB}$ of  independent standard complex-valued Gaussian random variables: 
\begin{equation}
g_n=\Re g_n + i\Im g_n,\qquad n \in \NB.
\label{gauss1}
\end{equation}

\noi
Here,  $\{\Re g_n, \Im g_n\}_{n\in \NB}$ is a
family of independent real-valued Gaussian random variables with mean 0 and variance $\frac 12$.
We then set  $g_{-n}=\cj{g_n}$, $n \in \NB$.
 The random variables $g_n$ are used to define the spatial white noise $u_0^\o$ 
 on $\mathbb{T}$ in \eqref{series4} which we use as initial data
 for \eqref{KdV1} and \eqref{KdV4}.

\smallskip
\item A family $\{\beta_n\}_{n \in \NB}$ of independent complex-valued Brownian motions,
satisfying~\eqref{W1a}:
\[\beta_n(t)=\Re \beta_n(t)+i \Im \beta_n(t),\qquad n \in \NB, \]

\noi
which is also independent of $\{g_n\}_{n\in \NB}$. 
We then set  $\beta_{-n}=\cj {\beta_{n}}$, $n \in \NB$.
 The Brownian motions $\beta_n$ serve to define the driving space-time white noise appearing in 
 \eqref{KdV1} as well as its truncated version \eqref{KdV4}.
\end{itemize}

\medskip

\noi
We emphasize that 
we only work with (spatial) mean-zero functions/distributions in the following.
 Given $\al \ge 0$, let $u_{0, \al}^\o$ be a white noise on $\T$ with variance $\al$
given by 
\begin{align}
u_{0, \al}^\o(x) = \sqrt \al\,  u_0^\o(x) = \sqrt{\al}  \sum_{n \in \Z_*} g_n(\o) e^{inx}, 
\label{series5}
\end{align}

\noi
where $\{g_n \}_{n \in \Z}$ is as in \eqref{gauss1}, 
and set  
\begin{align}
\mu_\al = \Law (u_{0, \al}^\o)
\label{mu7}
\end{align}
to be the (mean-zero) white noise measure with variance $\al$.
With this version of  $\mu_\al$, 
we set
\[ \mu_\al^{N, \low} = (\P_N)_* \mu_\al
\qquad \text{and}\qquad
\mu_\al^{N, \high} = (\P_N^\perp)_* \mu_\al.\]

\noi
Then, \eqref{mu2} holds in the current setting.

\medskip

In this section, we study the truncated SKdV \eqref{KdV4}
%
and present the proof of Proposition~\ref{PROP:finite}.
In view of the discussion in Section \ref{SEC:1}, 
it suffices to prove \eqref{mu3} and \eqref{mu4}
for the high and low frequency dynamics, respectively.

We first consider the high frequency dynamics \eqref{KdV6}:
\begin{align}
\dt u_N^\perp + \dx^3 u_N^\perp  = \P_{N}^\perp \xi.
\label{KdV9a}
\end{align}

\noi
By working  on the Fourier side, 
we see that \eqref{KdV9a}
is a system of decoupled linear SDEs for each frequency.
In particular, \eqref{KdV9a} is globally well-posed
and the  solution to \eqref{KdV9a} is given by 
\begin{align*}
\ft {u^\perp_N}(n, t) = e^{i t n^3} \ft {u^\perp_N}(n, 0) + \int_0^t
e^{i (t-t') n^3} d\be_n(t'), \qquad |n| > N, 
\end{align*}
 
\noi
for general initial data $u_N^\perp(0) = \P_N^\perp u_N^\perp(0)$.
In particular, when the initial data is given by $\P_N^\perp u_0^\o$ with $ u_0^\o$ in \eqref{series4}, 
we have 
\begin{align*}
\ft {u^\perp_N}(n, t) = e^{i t n^3} g_n + \int_0^t
e^{i (t-t') n^3} d\be_n(t') =: \1_n + \II_n, \qquad |n| > N, 
\end{align*}

\noi
Note that $\Law(\1_n) = \Law(g_n)$
(see Lemma 4.2 in \cite{OTz})
and $\Law(\II_n) = \Law(\sqrt t \, g_n)$.
Then, from the independence of $\1_n$ and $\II_n$, 
we conclude that 
\begin{align}
(P^{N, \high}_{0, t})^* \mu^{N, \high}_{1}
=   \mu^{N, \high}_{1+t }.
\label{mu8}
\end{align}

\noi
Therefore, from \eqref{mu8} and the flow property 
of the solution map 
 $\Phi^{N, \high}_{t_1, t_2}$ for \eqref{KdV9a}
 (analogous to \eqref{flow}), 
 we conclude 
\eqref{mu3}.

\medskip

Let us now turn our attention to the low frequency dynamics \eqref{KdV5}:
\begin{align}
\dt u_N + \dx^3 u_N + \P_{N}(u_N \dx u_N) = \P_{N} \xi.
\label{KdV10}
\end{align}


\begin{lemma}\label{LEM:LWP}
Let $n \in \NB$.
Given any initial data $u_N(0) = \P_N u_N(0)$ with $\ft u_N(0, 0) = 0$, 
there exists a unique global solution  $u_N\in C(\R_+;L^2(\mathbb{T}))$
to  \eqref{KdV10} with $u_N|_{t = 0} = u_N(0)$.
\end{lemma}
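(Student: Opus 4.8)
The plan is to read \eqref{KdV10} as a finite-dimensional system of It\^o SDEs and to promote local to global solvability using a conservation-type a priori estimate. Working on the Fourier side and recalling that we are in the mean-zero setting (so $\ft u_N(0,t)\equiv 0$), the unknown is the finite vector $(\ft u_N(n,t))_{0<|n|\le N}$, taking values in the finite-dimensional space $E_N=\mathrm{span}\{e^{inx}:0<|n|\le N\}$. In these coordinates \eqref{KdV10} becomes
\begin{align*}
d\ft u_N(n,t)=\bigg(in^3\,\ft u_N(n,t)-\frac{in}{2}\sum_{\substack{n_1+n_2=n\\0<|n_1|,|n_2|\le N}}\ft u_N(n_1,t)\,\ft u_N(n_2,t)\bigg)\,dt+d\be_n(t),\qquad 0<|n|\le N,
\end{align*}
an SDE whose drift is a polynomial (hence smooth and locally Lipschitz) vector field and whose diffusion is additive (constant).

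By the standard existence and uniqueness theory for SDEs with locally Lipschitz coefficients, there is a unique local strong solution $u_N$ with a.s.\ continuous paths, defined up to a (possibly finite) explosion time $\tau$, and pathwise uniqueness holds on its interval of existence. The only genuine issue, and the heart of the lemma, is to rule out finite-time explosion; once this is done, finite-dimensionality (equivalence of norms) together with path continuity immediately gives $u_N\in C(\R_+;L^2(\T))$.

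The key step is an a priori bound on $\|u_N(t)\|_{L^2}^2=\sum_{0<|n|\le N}|\ft u_N(n,t)|^2$ coming from the algebraic structure of the truncated nonlinearity. By It\^o's formula, the drift contribution to $d\|u_N\|_{L^2}^2$ is $2\jb{u_N,-\dx^3 u_N-\P_N(u_N\dx u_N)}_{L^2}\,dt$. The first term vanishes by the anti-symmetry of $\dx^3$, while the second vanishes because, using $u_N=\P_N u_N$ and integration by parts,
\begin{align*}
\jb{u_N,\P_N(u_N\dx u_N)}_{L^2}=\jb{u_N,u_N\dx u_N}_{L^2}=\tfrac{1}{3}\int_\T\dx(u_N^3)\,dx=0.
\end{align*}
Hence the only surviving contribution is the constant It\^o correction $C_N\,dt$ from the additive noise, where $C_N$ is the (finite) trace of the noise covariance restricted to $E_N$; in particular $\frac{d}{dt}\E\|u_N(t)\|_{L^2}^2=C_N$, so the second moment grows only linearly in time.

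This cancellation is exactly the main obstacle: the Fourier--Galerkin truncation by $\P_N$ in \eqref{KdV10} is designed precisely so that the truncated nonlinearity still conserves the $L^2$ norm, just as in the untruncated deterministic KdV \eqref{KDV}. With this in hand, $V(u)=\|u\|_{L^2}^2$ is a coercive Lyapunov function on $E_N$ whose generator satisfies $\L V\equiv C_N\le C_N(1+V)$, so the standard (Khasminskii-type) non-explosion criterion yields $\tau=\infty$ almost surely. Combined with local pathwise uniqueness, this produces the desired unique global solution $u_N\in C(\R_+;L^2(\T))$, completing the proof.
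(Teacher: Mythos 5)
Your proof is correct, but it packages the argument differently from the paper. The paper works with the mild (Duhamel) formulation \eqref{KdVN}: local well-posedness comes from a contraction argument in $C([0,T];L^2(\T))$, using Bernstein's inequality to control $\partial_x\P_N(u_N^2)$ (with an explicit $N^{3/2}$ loss) and a pathwise bound on the truncated stochastic convolution; globalization then follows from the moment bound $\E\big[\sup_{t\in[0,T]}\|u_N(t)\|_{L^2}^2\big]\le \|u_N(0)\|_{L^2}^2+C(T)\|\P_N\|_{\HS(L^2;L^2)}$, imported from de Bouard--Debussche--Tsutsumi, which rests on It\^o's formula, Doob's martingale inequality, and the $L^2$-conservation of the truncated KdV flow \eqref{KdV7}. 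You instead exploit finite-dimensionality head-on: the system is an SDE on $E_N$ with polynomial (locally Lipschitz) drift and additive noise, so standard SDE theory gives a unique local strong solution up to a possible explosion time, and non-explosion follows from Khasminskii's criterion with the Lyapunov function $V=\|\cdot\|_{L^2}^2$, whose generator is the constant $C_N$ thanks to exactly the same cancellation $\jb{u_N,\P_N(u_N\dx u_N)}_{L^2}=\frac13\int_\T\dx(u_N^3)\,dx=0$ that underlies the paper's a priori bound. So the crucial algebraic input is identical; the difference is that your route is more self-contained and elementary (no contraction argument, no stochastic convolution estimate, no citation for the moment bound), while the paper's mild-formulation treatment runs parallel to the analysis of the untruncated equation in Section \ref{SEC:LWP} and yields quantitative bounds (e.g.\ the $N^{1/2}$ growth of the second moment), neither of which is actually needed for this lemma. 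One small point worth making explicit in your write-up: in the mean-zero setting the zero mode is consistent with the dynamics (the noise has no zero mode and the nonlinearity carries a derivative), so the solution indeed stays in the mean-zero part of $E_N$; you assume this but do not verify it.
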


\begin{proof}
By writing \eqref{KdV10} in the Duhamel formulation, we have
\begin{equation}
u_N(t)=S(t)u_N(0)-\frac{1}{2}\int_0^t S(t-t')\partial_x \P_N (u_N^2)(t')d t'
+\int_0^t S(t-t')d \P_N W(t'), 
\label{KdVN}
\end{equation}

\noi
where $W$ is as in \eqref{W2}.
Note that $u_N(t) = \P_N u_N(t) $ as long as the solution $u_N$ exists.
By Bernstein's inequality (\cite[Appendix A]{Tao}), we have
\begin{align}
\|\partial_x \P_N u_N^2\|_{C([0, T]; L^2_x)}
\les N \| u_N\|_{C([0, T]; L^4_x)}^2
\les N^\frac 32  \| u_N\|_{C([0, T]; L^2_x)}^2, 
\label{a1}
\end{align}

\noi
which allows us to control the second term on the right-hand side of \eqref{KdVN}.
By the unitarity of $S(t)$ on $L^2(\T)$
and the basic property of a Wiener integral, we have
\begin{align*}
\E\Bigg[\bigg\|\int_0^t S(t-t')d \P_N W(t')\bigg\|_{C([0, T]; L^2_x)}^2\Bigg]
\les T N.
\end{align*}

\noi
In particular, we have 
\begin{align}
\bigg\|\int_0^t S(t-t')d \P_N W(t')\bigg\|_{C([0, T]; L^2_x)}
\le C(\o) T^\frac 12 N^\frac 12
\label{a2}
\end{align}

\noi
for some almost surely finite random constant $C(\o) > 0$.
Hence, we conclude from a standard contraction argument 
in $C([0, T]; L^2(\T))$ with \eqref{a1} and \eqref{a2}
that \eqref{KdV10} is locally well-posed.
Furthermore, the solution exists globally in time 
as long as its $L^2(\T)$-norm remains bounded.

As observed in \cite[Theorem 1.5]{ddt-1} and \cite[Section 3.2]{ddt-2}, a simple argument,
 using Ito's formula, Doob's martingale inequality, and the $L^2$-conservation of the  truncated KdV equation~\eqref{KdV7}, provides 
the following bound:
\begin{align*} \E\Big[\sup_{t \in [0, T]}\|u_N(t)\|_{L^2}^2 \Big]
& \le \|u_N(0)\|_{L^2}^2 
+ C(T) \| \P_N \|_{\HS(L^2; L^2)}\\
& \le \|u_N(0)\|_{L^2}^2 
+ C'(T) N^\frac 12
\end{align*}
\noi
for any finite $T > 0$, 
where
$\|\cdot \|_{\HS(L^2; L^2)}$ denotes the Hilbert-Schmidt norm from $L^2(\T)$ to $L^2(\T)$.
From this a priori bound,  we conclude global well-posedness of \eqref{KdV10}.
\end{proof}

%

In the following, 
we study the evolution of the distribution of
the solution  $u_N(t)$
to  the low frequency dynamics \eqref{KdV10}.
Let $p_n(t) =\Re  \ft u_N(n, t)$
and $q_n(t) = \Im \ft u_N(n, t)$ for $1\le  |n| \le N$.
Since $u_N$ is real-valued,  we have 
\begin{align*}
p_{-n} = p_n \qquad \text{and} \qquad q_{-n} = - q_n.
\end{align*}

\noi
Then, by writing \eqref{KdV10} on the Fourier side, 
we obtain the following finite-dimensional system of SDEs
for $(\bar p, \bar q) = (p_1, \dots, p_N, q_1, \dots, q_N)$:
\begin{align}
\begin{split}
d p_n & = P_n dt +  d (\Re \be_n),  \\
d q_n & = Q_{n}dt + d (\Im \be_n) 
\end{split}
\label{pq2}
\end{align}

\noi
for $n = 1, \dots,  N$, 
where $P_n$ and $Q_n$ are defined by 
\begin{align}
\begin{split}
P_n & :  = -n^3 q_n + \sum_{\substack{n = n_1 + n_2\\1\le |n_1|, |n_2| \le N}} n_2
(p_{n_1}q_{n_2} + q_{n_1} p_{n_2}) ,  \\
Q_{n} & :  = n^3 p_n - \sum_{\substack{n = n_1 + n_2\\1 \le |n_1|, |n_2| \le N}} n_2 
(p_{n_1}p_{n_2} - q_{n_1} q_{n_2}). 
\end{split}
\label{pq3}
\end{align}

\noi
Define  
$A(\bar p , \bar q) = A(p_1, \dots, p_N, q_1, \dots, q_N)$
by 
\begin{align}
A(\bar p , \bar q) = (P_1, \dots, P_N, Q_1, \dots, Q_N).
\label{ZX2}
\end{align}

\noi
Then, we have
\begin{align}
\text{div}_{\bar p, \bar q}  A(\bar p , \bar q) 
= \sum_{n = 1}^N
(\dd_{p_n} P_n + \dd_{q_n} Q_n)
= \sum_{n = 1}^N \ind_{2n \le N} (nq_{2n} -nq_{2n} ) = 0.
\label{pq5}
\end{align}

Let $\bar x = (x_1, \dots, x_{2N}) = (p_1, \dots, p_N, q_1, \dots, q_N)$.
In the following, we briefly go over the derivation of the Kolmogorov forward equation for the evolution of the density of the distribution for 
\begin{align}
\ft U(t) = \big(\Re \ft u_N(1, t), \dots, \Re \ft u_N(n, t),
\Im \ft u_N(1, t),\dots, \Im \ft u_N(n, t)\big).
\label{pq55}
\end{align}

\noi
See, for example,  \cite{stroock, daprato}.
Recalling from  \eqref{W1a} that $\E[(\Re \be_n(t)^2] = \E[(\Im \be_n(t)^2] = \frac t2$, 
we see that 
the Kolmogorov operator $\L$ for \eqref{pq2} is given  by 
\begin{align}
\L = \frac{1}{4}\Delta_{\bar x} +A(\bar x)\cdot \nabla_{\bar x}, 
\label{pq5a}
\end{align}

\noi
where $A(\bar x)$ is given by 
\begin{align}
A(\bar x) = (P_1, \dots, P_N, Q_1, \dots, Q_N).
\label{pq56}
\end{align}

\begin{lemma}
Let $f_0(\bar x)$ be a density of the distribution for $\ft U(0)$.
Then, the  density $f(\bar x, t)$ of the distribution for $\ft U(t)$ 
satisfies 
 the following Kolmogorov forward equation on $\R^{2N}$\textup{:}
\begin{equation}
 \begin{cases}
 \partial_t f(\bar x, t)
 -\frac{1}{4}\Delta_{\bar x}f(\bar x, t)
 +A(\bar x)\cdot \nabla_{\bar x} f(\bar x, t)=0,\\
  f|_{t = 0}=f_0,
\end{cases}
\label{IVP}
\end{equation}

\noi
where  the vector field $A(\bar x)$ is as in \eqref{pq56}.
\end{lemma}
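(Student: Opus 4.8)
The plan is to recognize \eqref{pq2} as a nondegenerate, finite-dimensional It\^o system on $\R^{2N}$ and to derive \eqref{IVP} as its associated Kolmogorov forward (Fokker-Planck) equation. First I would record that, in the coordinates $\bar x = (p_1, \dots, p_N, q_1, \dots, q_N)$, the system \eqref{pq2} has a constant diffusion matrix and drift $A(\bar x)$ as in \eqref{pq56}. Reading off \eqref{W1a}, the real and imaginary parts $\Re \be_n, \Im \be_n$ are independent real-valued Brownian motions, each with $\E[(\Re \be_n(t))^2] = \E[(\Im \be_n(t))^2] = \frac t2$, so the (constant) diffusion matrix equals $\frac 12 \Id_{2N}$, and the infinitesimal generator of the diffusion is precisely the Kolmogorov operator $\L$ in \eqref{pq5a}.

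Next, the standard theory for such SDEs gives that the law of $\ft U(t)$ admits a density $f(\bar x, t)$ solving $\dt f = \L^* f$, where $\L^*$ is the formal ($L^2$-)adjoint of $\L$. I would compute $\L^*$ directly: since the diffusion part $\frac 14 \Dl_{\bar x}$ is self-adjoint and the first-order part $A \cdot \nb_{\bar x}$ has adjoint $g \mapsto - \text{div}_{\bar x}(A g)$, one obtains $\L^* g = \frac 14 \Dl_{\bar x} g - \text{div}_{\bar x}(A g)$. The crucial simplification is the divergence-free identity \eqref{pq5}, $\text{div}_{\bar x} A(\bar x) = 0$, which has already been verified; using it, $\text{div}_{\bar x}(A g) = A \cdot \nb_{\bar x} g + g\, \text{div}_{\bar x} A = A \cdot \nb_{\bar x} g$. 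Hence $\L^* f = \frac 14 \Dl_{\bar x} f - A \cdot \nb_{\bar x} f$, and $\dt f = \L^* f$ rearranges exactly to \eqref{IVP}, with the initial condition $f|_{t = 0} = f_0$ being the prescribed density of $\ft U(0)$.

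The main point requiring care is the existence of a sufficiently regular density and the justification that it solves the forward equation. Here I would rely on two facts. The diffusion matrix $\frac 12 \Id_{2N}$ is constant and strictly positive definite, so the system is uniformly elliptic; combined with the smoothness (indeed, the polynomial form) of the drift $A$ in \eqref{pq3}, parabolic regularity yields a smooth transition density. Since $A$ is only locally Lipschitz, non-explosion is needed, but this is already guaranteed by the global well-posedness established in Lemma \ref{LEM:LWP}. I would cite the standard references \cite{stroock, daprato} for the passage from the SDE to the Fokker-Planck equation. With these in place, the only equation-specific input is the divergence-free structure \eqref{pq5}, and the lemma follows.
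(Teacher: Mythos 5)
Your proposal is correct and follows essentially the same route as the paper: identify the generator $\L = \frac14\Delta_{\bar x} + A(\bar x)\cdot\nabla_{\bar x}$ from the SDE \eqref{pq2} (with the factor $\frac14$ coming from \eqref{W1a}), pass to the forward equation via the formal adjoint, and use the divergence-free identity \eqref{pq5} to reduce $\L^*$ to $\frac14\Delta_{\bar x} - A\cdot\nabla_{\bar x}$. The paper merely spells out the passage from the SDE to the forward equation more explicitly (via the smooth fundamental solution of the backward equation, the probabilistic representation, and It\^o's formula, citing \cite{stroock, daprato}), while you invoke the same standard theory directly; your added remark that non-explosion follows from Lemma \ref{LEM:LWP} is a sound way to handle the merely locally Lipschitz quadratic drift.
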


\begin{proof}

This is classical, so we only provide a sketch. 
Consider 
\begin{align}
\begin{cases}
(\partial_t - \L)g(\bar x, t)=0\\ 
\, g|_{t = 0} = g_0, 
\end{cases}
\label{pq6}
\end{align}

\noi
where $\L$ is as in \eqref{pq5a}.
It is well known that \eqref{pq6}
has a smooth fundamental solution $p(\bar x,\bar y, t)$ for $(\bar x,\bar y, t)\in  \mathbb{R}^{2N}\times \mathbb{R}^{2N}
\times \R_+$, 
and thus,  
for initial data $g_0\in C^2(\mathbb{R}^{2N})$ with bounded derivatives, 
the unique solution to \eqref{pq6} is given by
\begin{equation}
g(\bar x, t)=\int_{\R^{2N}}  g_0( \bar y)p(\bar x, \bar y, t)d \bar y.
\label{pq7}
\end{equation}

\noi
Here, $(\bar x, t)\mapsto p(\bar x, \bar y, t)$ 
satisfies $(\partial_t - \L)p(\bar x,\bar y, t)=0$ for each fixed $\bar y \in \R^{2N}$. 
See,  for example, \cite[Lemma 3.3.3]{stroock}. 
Moreover, $g(\bar x, t)$ has the following probabilistic representation
(\cite[Theorem 9.16]{daprato}):
\begin{align}
\begin{split}
g(\bar x, t)=\E_{\bar y = \ft U(t)} \big[ & g_0(\bar y)\, |  \, \bar x = \ft U(0)\big], 
\end{split}
\label{pq8}
\end{align}

\noi
where $\ft U(t)$ is as in \eqref{pq55}
and 
 the expectation on the right-hand side is taken  with respect to the vector 
 $\bar y = \ft U(t)$
 conditioned that  $\bar x = \ft U(0)$.
Hence, it follows from \eqref{pq8} and \eqref{pq7} that 
\begin{align*}
\E_{\bar y = \ft U(t)} [ g_0(\bar y)]
& = \E_{\bar x= \ft U(0)}\big[\E_{\bar y = \ft U(t)} [ g_0(\bar y)\mid \bar x = \ft U(0)]\big]\\
& = \int_{\R^{2N}}\int_{\R^{2N}}  g( \bar y)p(\bar x, \bar y, t)d \bar y f_0(\bar x) d \bar x.
\end{align*}

\noi
Therefore, 
the density $f(\bar y, t)$ of $\ft U(t) $ is given by 
 \begin{align}
 f(\bar y, t)=\int f_0(\bar x )p(\bar x,\bar y, t)d \bar x
 \label{pq9}
 \end{align}

Now, 
it follows from \eqref{pq7}, \eqref{pq8},  and
 Ito's formula (see, for example, the proof of Proposition 9.9 in \cite{daprato}), we see
\begin{align*}
 \int_{\R^{2N}}  g_0( \bar y)\dt p(\bar x, \bar y, t)d \bar y
& = \frac d {dt} \E_{\bar y = \ft U(t)} \big[  g_0(\bar y)\, |  \, \bar x = \ft U(0)\big]\\
& =\int (\L g_0)(\bar y) p(\bar x,\bar y, t)d\bar y\\
& =\int g_0(\bar y)\,(\L^t_{\bar y}p)(\bar x,\bar y, t)d\bar y, 
\end{align*}

\noi
where 
 $\L^t$ is the formal adjoint of $\L$ given by 
\[\L^t_{\bar y}=\frac{1}{4}\Delta_{\bar y}-A(\bar y)\cdot \nabla_{\bar y}.\]

\noi
Note that, in the computation of $\L^t$, we used \eqref{pq5}:
$\text{div}_{\bar y}  A(\bar y) = 0$. 
Hence, we conclude that  $(\bar y, t)\mapsto p(\bar x,\bar y, t)$ satisfies  
$(\partial_t-\L_{\bar y}^t)p(\bar x,\bar y, t)=0$, and 
therefore, we conclude from~\eqref{pq9} that 
 $f(\bar y, t)$ satisfies 
 $(\partial_t-\L_{\bar y}^t)f(\bar y, t) =0$.
\end{proof}

We are now ready to prove \eqref{mu3}.
Let $\g_\al$ be the density for the normal distribution
on~$\R$
with mean~0 and variance $\frac \al 2> 0$:
\begin{align*}
\gamma_\al(x)&=\frac{1}{\sqrt{\pi\al}}e^{-\frac{x^2}{\al}}.
\end{align*}

\noi
Then, in the current setting, the density 
of the distribution for $\P_N u_0^\o$ with $u_0^\o$ as in \eqref{series4}
is given by 
\begin{equation*}
f_0(\bar x)=\prod_{n= 1}^{2N} \gamma_1(x_n).
\end{equation*}

\noi
The following lemma shows that 
 the solution $u_N(t)$ to \eqref{KdV10} with initial data $u_{0, \al}^\o = \sqrt \al u_0^\o$ in~\eqref{series5}
 is distributed by the (mean-zero) white noise measure $\mu_{\al+t}$ in \eqref{mu7},
 which in particular proves \eqref{mu3}.
 
\begin{lemma}\label{LEM:stat}
For any $\al>0$, the function $f_{N, \al}$ given by 
\[f_{N, \al}(\bar x, t)=\prod_{n=1}^{2N} \gamma_{\al+t}(x_n)
=\frac{1}{(\pi (\al+t))^{\frac N2}} e^{-\frac{|\bar x|^2}{\al+t}}\] 
is the unique solution to \eqref{IVP}.

\end{lemma}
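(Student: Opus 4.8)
The plan is to establish the lemma in two stages: first verify directly that the Gaussian ansatz $f_{N,\al}$ solves the Cauchy problem \eqref{IVP}, and then invoke uniqueness for this linear parabolic equation so that $f_{N,\al}$ is \emph{the} solution, which by the preceding lemma coincides with the density of $\ft U(t)$. The verification is where the substance lies, and I would organize it by splitting the forward operator in \eqref{IVP} into its diffusion part $-\frac14\Delta_{\bar x}$ and its drift part $A(\bar x)\cdot\nabla_{\bar x}$, handling each separately against the radial Gaussian $f_{N,\al}(\bar x,t)=c(t)\,\exp\!\big(-|\bar x|^2/(\al+t)\big)$.

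The drift term is the crux. Since $f_{N,\al}$ depends on $\bar x$ only through $|\bar x|^2$, one has $\nabla_{\bar x} f_{N,\al}=-\tfrac{2}{\al+t}\,\bar x\, f_{N,\al}$, whence $A(\bar x)\cdot\nabla_{\bar x} f_{N,\al}=-\tfrac{2}{\al+t}\big(A(\bar x)\cdot\bar x\big) f_{N,\al}$, so it suffices to prove the radial orthogonality
\[
A(\bar x)\cdot\bar x=\sum_{n=1}^{N}\big(P_n p_n+Q_n q_n\big)=0.
\]
The linear (Airy) contributions $-n^3 q_n p_n+n^3 p_n q_n$ in \eqref{pq3} cancel pointwise for each $n$, while the quadratic contributions reassemble into $\langle u_N,\P_N(u_N\dx u_N)\rangle=\int_\T u_N^2\,\dx u_N\,dx=\tfrac13\int_\T \dx(u_N^3)\,dx=0$; that is, $A\cdot\bar x$ is precisely $\tfrac{d}{dt}\big(\tfrac12\|u_N\|_{L^2}^2\big)$ along the truncated KdV flow \eqref{KdV7}, which vanishes. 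I expect this step to require the most care, even though it amounts to the familiar $L^2$-conservation identity for KdV; the point to check is that the Dirichlet truncation does not spoil it, which holds because $\P_N$ is self-adjoint and $u_N=\P_N u_N$.

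With the drift eliminated, \eqref{IVP} reduces to the free diffusion $\partial_t f_{N,\al}=\tfrac14\Delta_{\bar x}f_{N,\al}$. This is immediate from the one-dimensional identity $\partial_t\gamma_{\al+t}=\tfrac14\partial_x^2\gamma_{\al+t}$ — i.e.\ $\gamma_{\al+t}$ is the heat kernel for $\partial_t-\tfrac14\partial_x^2$ — together with the product structure $f_{N,\al}=\prod_{n}\gamma_{\al+t}(x_n)$ over the $2N$ independent coordinates; evaluating at $t=0$ gives $f_{N,\al}(\,\cdot\,,0)=\prod_n\gamma_\al=f_0$, so $f_{N,\al}$ solves \eqref{IVP}.

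Finally, for uniqueness I would argue within the class in which the forward equation was derived in the preceding lemma, namely densities represented through the smooth fundamental solution $p(\bar x,\bar y,t)$ of $\partial_t-\L$. Since that lemma already exhibits the law of $\ft U(t)$ as a density solving \eqref{IVP} with initial data $f_0$, and the representation \eqref{pq9} produces a unique such probability density, the two solutions coincide, so $\ft U(t)$ is distributed according to $\mu_{\al+t}$. The only delicate point is that the polynomial drift $A$ is unbounded, so rather than appeal to a general Fokker–Planck uniqueness theorem I would restrict uniqueness to the natural class of integrable (probability density) solutions — equivalently, I would use that the system \eqref{pq2} admits global solutions by Lemma \ref{LEM:LWP}, which makes its transition density canonical.
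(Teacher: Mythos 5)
Your proposal is correct and takes essentially the same route as the paper's proof: reduce the drift term to the radial identity $A(\bar x)\cdot\bar x=0$, verify it via the cancellation of the dispersive terms and the Parseval/$L^2$-conservation identity $\int_\T \dx(u_N^3)\,dx=0$ (using $u_N=\P_N u_N$ and self-adjointness of $\P_N$), observe that $\gamma_{\al+t}$ solves the $\tfrac14$-heat equation, and invoke classical uniqueness (the paper cites Da Prato, Theorem 9.16, where you instead argue within the class of probability densities attached to the fundamental solution --- a harmless variant of the same appeal to classical theory).
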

\begin{proof}
Uniqueness is classical (see \cite[Theorem 9.16]{daprato}).
Hence, we only need to check  that $f_{N, \al}$ is a solution to \eqref{IVP}.

A direct computation shows 
\[\partial_t \gamma_{\al + t}(x_n)= \frac{1}{4}\partial_{x_n}^2 \gamma_{\al+t}(x_n)\]

\noi
for $n = 1, \dots, N$.
Hence, 
it suffices to prove 
\[A(\bar x)\cdot \nabla \bigg( \prod_{n=1}^{2N}\gamma_{\al + t}(x_n)\bigg)=0.\]
Since 
\[\partial_{x_n}\gamma_{\al+t}(x_n)=-\frac{2x_n}{\al + t}\gamma_{\al + t}(x_n),\]

\noi
it suffices  to  check $A(\bar x) \cdot \bar x = 0$.
Recalling 
 $\bar x = (x_1, \dots, x_{2N}) = (p_1, \dots, p_N, q_1, \dots, q_N)$, 
 it follows from 
\eqref{pq3} and \eqref{pq56}
that 
\begin{align*}
A(\bar x) \cdot \bar x
& = 
 -\sum_{n = 1}^N n^3 q_n p_n + 
 \sum_{n = 1}^N \sum_{\substack{n = n_1 + n_2\\1\le |n_1|, |n_2| \le N}} n_2
(p_{n_1}q_{n_2} + q_{n_1} p_{n_2}) p_n  \\
& \quad +\sum_{n = 1}^N n^3 p_nq_n  - 
\sum_{n = 1}^N\sum_{\substack{n = n_1 + n_2\\1 \le |n_1|, |n_2| \le N}} n_2 
(p_{n_1}p_{n_2} - q_{n_1} q_{n_2})q_n\\
& = - \sum_{n= 1}^N \Re\big(\F_x ({\P_{N}(u_N \dx u_N)})(n)\big) \Re \ft u_N(n)\\
& \quad - \sum_{n= 1}^N \Im\big(\F_x ({\P_{N}(u_N \dx u_N)})(n)\big) \Im \ft u_N(n), 
\end{align*}

\noi
where $\F_x$ denotes the Fourier transform.
In the second step, we used the definition:
 $p_n(t) =\Re  \ft u_N(n, t)$
and $q_n(t) = \Im \ft u_N(n, t)$ 
together with 
\eqref{KdV10} and \eqref{pq2}.
By Parseval's identity with the fact that $u_N$ is real-valued and $u_N = \P_N u_N$, we then have 
\begin{align}
A(\bar x) \cdot \bar x
 = \frac 12 \int_\T \P_{N}(u_N \dx u_N) u_N dx
 = \frac 16 \int_\T \dx (u_N ^3)  dx = 0.
\label{ZX1}
\end{align}

\noi
We point out that, 
in view of  
\eqref{pq3}
and 
\eqref{ZX2}, 
$A(\bar x) \cdot \bar x = 0$ in \eqref{ZX1}
is equivalent to the conservation of $\l^2$-norm (= the Euclidean distance in $\R^{2N}$)
for the deterministic system:
\begin{align*}
 \dt p_n & = P_n,\\
\dt q_n & = Q_n
\end{align*}
\noi
for $n = 1, \dots, N$
(which in turn is equivalent to the conservation of the $L^2$-norm
for the  finite-dimensional KdV~\eqref{KdV7}).
This 
concludes the proof of Lemma \ref{LEM:stat}
\end{proof}

\begin{remark}\rm
A slight modification of the computations in the proof of Lemma \ref{LEM:stat} 
shows  the truncated white noise is invariant under 
the  finite-dimensional KdV dynamics~\eqref{KdV7}.
\end{remark}

As a corollary to Proposition \ref{PROP:finite}, 
we obtain the following tail estimate on 
 the size of solutions $u^N(t)$ to \eqref{KdV4}.

\begin{lemma}\label{LEM:tail}
Let $s <0$ and finite $p > 1$ such that $sp < -1$. 
Given $\al > 0$,  let $u^N$ be the solution to~\eqref{KdV4} with initial data 
$u_{0, \al}^\o = \sqrt \al\,  u_0^\o $ in \eqref{series5}.
  Then, we have 
\begin{equation}\label{tail}
\begin{split}
\PP\Big(\|u^N(t)\|_{\widehat{b}^s_{p,\infty}}>\lambda \bigg)
&=\PP\bigg( \sqrt{\frac{\al+t}{\al}}\|u_0^\o\|_{\widehat{b}^s_{p,\infty}}>\lambda\bigg)\\
&\le Ce^{-c\frac{\al}{\al + t} \lambda^2}.
\end{split}
\end{equation}

\noi
 for any $t \in \R_+$ and $\ld > 0$, 
 where the constants $C, c> 0$ are independent of  $\al > 0$.

\end{lemma}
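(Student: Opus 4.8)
The plan is to reduce the tail bound to a single Gaussian concentration estimate for the $\widehat{b}^s_{p,\infty}$-norm of a fixed white noise, feeding in the distributional identity for $u^N(t)$ that is already available. First I would record that identity: since the initial datum $u_{0,\al}^\o$ has law $\mu_\al$ and the forcing on $[0,t]$ is independent of it, the law of $u^N(t)$ equals $(P^N_{0,t})^*\mu_\al$, which by Remark~\ref{REM:finite} is exactly $\mu_{\al+t}$. As $\mu_{\al+t}$ is the white noise with variance $\al+t$ and $\mu_{\al+t} = \Law(\sqrt{(\al+t)/\al}\,u_{0,\al}^\o)$ by scaling, this yields the first equality in \eqref{tail} and reduces everything to a tail bound for a fixed white noise, the variance entering only through the deterministic rescaling factor.

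It then remains to establish
\begin{equation}
\PP\big(\|u_0^\o\|_{\widehat{b}^s_{p,\infty}} > R\big) \le C e^{-cR^2}, \qquad R > 0,
\label{EQ:tailWN}
\end{equation}
for the variance-one white noise $u_0^\o$, with absolute constants $C,c>0$; substituting the rescaled threshold into \eqref{EQ:tailWN} then gives \eqref{tail}. I would obtain \eqref{EQ:tailWN} from the Borell--TIS (Gaussian concentration) inequality. Regard $\|\cdot\|_{\widehat{b}^s_{p,\infty}}$ as a function $F$ of the Gaussian coefficient sequence $(g_n)_{n\in\Z_*}$ equipped with its natural $\ell^2$ Cameron--Martin structure, and write $F = \sup_{j\ge 0}F_j$ with $F_j(g) = (\sum_{|n|\sim 2^j}\jb{n}^{sp}|g_n|^p)^{1/p}$. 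Because $sp<-1$ guarantees $F<\infty$ almost surely (Proposition~3.4 in \cite{OH1}), $F$ has a finite median $m_F$, so it suffices to bound its Lipschitz constant.

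The heart of the matter, and the step I expect to be most delicate, is the Lipschitz bound for $F$ that is uniform over the infinitely many dyadic blocks. By the $\ell^p$ triangle inequality, the estimate $\sup_{|n|\sim 2^j}\jb{n}^s \les 2^{js}$, and the embedding $\ell^p(\{|n|\sim 2^j\}) \hookrightarrow \ell^2$ with norm $\les 2^{j(1/p-1/2)_+}$, each block obeys $\mathrm{Lip}(F_j) \les 2^{j(s+(1/p-1/2)_+)}$. Since $s<0$ handles the range $p\ge 2$ and $sp<-1$ forces $s + 1/p - 1/2 < 0$ when $1<p<2$, all these exponents are negative, whence $\mathrm{Lip}(F) \le \sup_j \mathrm{Lip}(F_j) \les 1$ with an absolute constant; crucially, the decay of the weights $2^{js}$ for $s<0$ replaces the divergent union bound over $j$ that a cruder argument would incur. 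Gaussian concentration then gives $\PP(F > m_F + r) \le e^{-cr^2}$ for all $r>0$, and since $m_F$ is a finite absolute constant, \eqref{EQ:tailWN} follows after enlarging $C$. The remaining steps — the distributional identity from Remark~\ref{REM:finite} and the final rescaling — are routine.
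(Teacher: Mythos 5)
Your proposal is correct, and its first half is exactly the paper's: the identity $\Law(u^N(t)) = (P^N_{0,t})^*\mu_\al = \mu_{\al+t}$ from Remark \ref{REM:finite} (the variance-$\al$ version of Proposition \ref{PROP:finite}), together with Gaussian scaling, reduces \eqref{tail} to a sub-Gaussian tail bound for the variance-one white noise $u_0^\o$. Where you diverge is in how that tail bound is produced. The paper's route is a citation: $(\mu_1, \ft{b}^s_{p,\infty}(\T), L^2(\T))$ is an abstract Wiener space for $sp<-1$ (\cite[Proposition 3.4]{OH1}), so Fernique's theorem \cite{Fer} gives $\E \exp\big(\eps\|u_0^\o\|_{\ft{b}^s_{p,\infty}}^2\big)<\infty$ and Chebyshev's inequality finishes; almost sure finiteness of the norm is the only geometric input. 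You instead invoke Gaussian concentration (Borell--TIS), which needs the extra step of bounding the Lipschitz constant of the norm along the $\l^2$ Cameron--Martin directions. Your block-wise computation of that constant is right: for $p\ge 2$ the block constants are $\les 2^{js}$ with $s<0$, and for $1<p<2$ they are $\les 2^{j(s+\frac1p-\frac12)}$ with $s+\frac1p-\frac12<-\frac12$ forced by $sp<-1$, so the supremum over $j$ is $O(1)$ (the conjugation constraint $g_{-n}=\cj{g_n}$ only affects constants, and the norm is a countable supremum of linear functionals, so measurability is not an issue). Your route is more self-contained and quantitative, identifying the fluctuation scale as $O(1)$ about the median; Fernique is shorter because it requires nothing about the norm beyond a.s. finiteness. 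Both legitimately prove the same estimate.

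One caveat, which is a defect of the lemma as printed rather than of your argument. Your reduction gives $\Law(u^N(t)) = \Law(\sqrt{\al+t}\,u_0^\o)$, so substituting the threshold $\lambda/\sqrt{\al+t}$ produces the bound $Ce^{-c\lambda^2/(\al+t)}$, not the displayed $Ce^{-c\frac{\al}{\al+t}\lambda^2}$. The display itself is internally inconsistent for $\al\neq 1$: with $u_0^\o$ of variance one, the middle probability in \eqref{tail} describes a white noise of variance $\frac{\al+t}{\al}$ rather than $\al+t$, and the stated exponential bound in fact fails for large $\al$ (send $\al\to\infty$ with $t,\lambda$ fixed: the left-hand side tends to $1$ while the right-hand side stays below $Ce^{-c\lambda^2}$). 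Your version, obtained by silently reading $u_{0,\al}^\o$ in place of $u_0^\o$ in the middle expression, is the consistent one; it implies the printed bound when $\al\le1$ and coincides with it in the only case the paper uses, namely $\al=1$ in Proposition \ref{PROP:main}.
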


The inequality in \eqref{tail} follows from 
the fact that $(\mu_1, \ft b^s_{p, \infty}(\T), L^2(\T))$ is an abstract Wiener space
when $sp < -1$ (\cite[Proposition 3.4]{OH1})
and  Fernique's theorem \cite{Fer}; see 
Theorem 3.1 in \cite{Kuo}.

\section{Review of the local well-posedness argument for SKdV}
\label{SEC:LWP}

In this section, we go over the local well-posedness
argument in \cite{OH4}
and collect useful estimates.

\subsection{Function spaces}
\label{SUBSEC:3a}

We first recall the definition of the $X^{s, b}$-spaces
adapted to the space $\ft b^s_{p, \infty}(\T)$ defined in \eqref{bs}.
Given $s \in \R$ and $1\le p, q \le \infty$, 
define 
the space $X^{s,b}_{p,q}(\T\times \R)$ by the norm:
\begin{align}
\|u\|_{X^{s,b}_{p,q}}=\|\langle n\rangle^s \langle \tau-n^3\rangle^b \widehat{u}(n,\tau)\|_{b^0_{p,\infty}L^q_\tau}.
\label{Xsb2}
\end{align}

\noi
In terms of the interaction representation $v(t) = S(-t) u(t)$, 
we have 
\[\|u\|_{X^{s,b}_{p,q}}=
\| \jb{\dx}^s \jb{\dt}^b v\|_{(\ft b^0_{p, \infty})_x \F L^{0, q}_t}, \]

\noi
where $\F L^{b, q}(\R)$ denotes the Fourier-Lebesgue space defined by the norm:
\begin{align}
\| f \|_{\F L^{b, q}} = \| \jb{\tau}^b \ft f(\tau)\|_{L^q_\tau}.
\label{FL1}
\end{align}

\noi
From \eqref{bs}, 
we have  $b^0_{p, \infty}(\Z) \supset \l^p(\Z) \supset \l^2(\Z)$ 
for $p \ge 2$, 
and thus we have
\begin{equation}
\|u\|_{X^{s ,b}_{p,2}} \le \|u\|_{X^{s,b}}
\label{embed0}
\end{equation}

\noi
for $p \ge 2$, 
where $X^{s, b}$ is the standard $X^{s, b}$-space defined in \eqref{Xsb}.
We also have 
\begin{align}
 \|u\|_{X^{-\frac 12 -\dl,b}}
\les
\|u\|_{X^{-\frac 12+ \dl ,b}_{p,2}}, 
\label{embed0a}
\end{align}

\noi
provided that $\dl > \frac{p-2}{4p}$ (with $p \ge 2$).
See \cite[eq.\,(17)]{OH4}.
See also the embedding \eqref{embed3} below.

Given an interval $I \subset \R_+$, 
we define the restriction space $X^{s,b}_{p,q}(I)$  of $X^{s,b}_{p,q}$ 
to the interval~$I$ 
by 
\begin{align}
\|u\|_{X^{s,b}_{p,q}(I)}=\inf\big\{\|v\|_{X^{s,b}_{p,q}(\T\times \R)}: v|_{I}=u\big\}.
\label{time1}
\end{align}

\noi
When $I = [0, T]$, we also set
$X^{s,b, T}_{p,q} =  X^{s,b}_{p,q}([0, T])$.
When $b > \frac 12$, it follows from  the Riemann-Lebesgue lemma that 
\begin{align}
X^{s, b}_{p, 2}(I) \subset  C(I; \ft b^s_{p,\infty}(\T))
 \label{embed1}
\end{align}

\noi
for any $s \in \R$ and $1 \le p \le \infty$.

When $q = 2$, in order to capture the temporal regularity of the stochastic convolution:
\begin{equation}
\Psi(t) = \int_0^t S(t-t')d W(t'), 
\label{psi1}
\end{equation}

\noi
where $W$ is as in \eqref{W1}, 
we need to take $b < \frac 12$ (see Lemma \ref{LEM:sto1} below), 
for which the embedding~\eqref{embed1} fails.
When $q = 1$, we have the following embedding:
\begin{align}
X^{s, 0}_{p, 1}(I) \subset  C(I; \ft b^s_{p,\infty}(\T)),
 \label{embed2}
\end{align}

\noi
and thus 
we use $X^{s, 0}_{p, 1}(I)$
as an auxiliary function space.

We now recall  the basic linear estimate for KdV;
given  $s\in\mathbb{R}$ and $0 \le b<\frac{1}{2}$, we have 
\begin{equation}
\|S(t)u_0\|_{X^{s,b,T}_{p,2}}\lesssim T^{\frac{1}{2}-b}\|u_0\|_{\widehat{b}^s_{p,\infty}}.
\label{lin1}
\end{equation}

\noi
for $0 < T \le 1$, 
where $X^{s,b,T}_{p,2} = X^{s,b}_{p,2}([0, T])$
is the restriction space defined in \eqref{time1}.
Next, we recall the $L^4$-Strichartz estimate due to 
 Bourgain \cite[Proposition 7.15]{BO1} (see also \cite[Proposition 6.4]{Tao2}):
\begin{equation}
\|u\|_{L^4(\mathbb{T}\times \mathbb{R})}\lesssim \|u\|_{X^{0,\frac{1}{3}}}.
\label{L4}
\end{equation}

\medskip

Lastly, we define the Fourier-Lebesgue space $\F L^{s. p}(\T)$ in the spatial variable by the norm:
\begin{align}
\| f \|_{\F L^{s, p}} = \| \jb{n}^s \ft f(n)\|_{\l^p_n}.
\label{FL2}
\end{align}

\noi
Then, for  $s \in \R$ and $1\le p, q \le \infty$, 
we define 
 the $X^{s, b}$-spaces
adapted to the Fourier-Lebesgue spaces by 
the norm:
\begin{align}
\|u\|_{Y^{s,b}_{p,q}}=\|\langle n\rangle^s \langle \tau-n^3\rangle^b \widehat{u}(n,\tau)\|_{\l^{p}_nL^q_\tau}.
\label{Xsb3}
\end{align}

\noi
Trivially, we have
\begin{align}
\| f\|_{\ft b^s_{p, \infty}} \le \| f \|_{\F L^{s, p}}
\qquad \text{and}\qquad 
\|u\|_{X^{s,b}_{p,q}} \le \|u\|_{Y^{s,b}_{p,q}}.
\label{FL3}
\end{align}

\noi
Given an interval $I \subset \R_+$, we define
the restriction space
 $Y^{s,b}_{p,q}(I)$ 
 as in \eqref{time1}.

\subsection{Partially iterated Duhamel formulation}
\label{SUBSEC:Duhamel}

In this subsection, 
we discuss the 
partially iterated Duhamel formulation
used in \cite{OH4}.

First, we consider the deterministic KdV \eqref{KDV}
considered in \cite{BO3}.
By writing it in the Duhamel formulation, we have
\begin{equation}
u(t)=S(t)u_0-\frac{1}{2}\N(u, u)(t), 
\label{DD1}
\end{equation}

\noi
where $\N(u_1, u_2)$ is given by 
\begin{align}
\N(u_1, u_2) (t) = \int_0^t S(t-t')\partial_x (u_1 u_2)(t')d t.
\label{DD2}
\end{align}

\noi
Note that the Fourier transform $\ft{u_1u_2}(n,\tau)$ can be written in the convolution form:
\begin{align*}
\ft{u_1u_2}(n,\tau) = 
\sum_{\substack{n = n_1+n_2}}\intt_{\tau = \tau_1+\tau_2}\ft{u_1}(n_1,\tau_1)\ft{u_2}(n_2,\tau_2)\,d\tau_1.
\end{align*}

\noi
Henceforth, we denote by $(n,\tau)$, $(n_1,\tau_1)$, 
and $(n_2,\tau_2)$ the space-time frequency  variables for the Fourier transforms
of $\N(u_1, u_2)$, $u_1$, and $u_2$ in \eqref{DD2}, respectively. 
In particular, we have
\begin{align}
 n = n_1 + n_2 \qquad \text{and}\qquad \tau = \tau_1 + \tau_2.
\label{DD4}
 \end{align}

\noi
By assuming that the initial data $u_0$ has spatial mean 0, 
it follows that $u(t)$ also has mean~0.
Furthermore, in view of the derivative on the nonlinearity, 
we  may assume that $n, n_1, n_2 \ne 0$.
We also denote the  modulations by 
\begin{align}
\s_0 = \jb{\tau - n^3} \qquad \text{and} \qquad \s_j = \jb{\tau_j - n_j^3}, 
\quad j = 1, 2.
\label{sig1}
\end{align}

\noi
Recall the following algebraic relation \cite{BO1}:
\begin{equation*} 
n^3 - n_1^3 - n_2^3 = 3 n n_1 n_2
\end{equation*}

\noi
for $n = n_1 + n_2$.
Then, under \eqref{DD4}, we have 
\begin{equation}
\MAX:= \max( \s_0, \s_1, \s_2) \gtrsim \jb{n n_1 n_2}.
\label{max}
\end{equation}

In our setting, we need to take $b < \frac 12$
to capture the temporal regularity of the stochastic convolution $\Psi$ in \eqref{psi1}.
On the other hand, 
the crucial bilinear estimate \eqref{bilin1}
holds only for $b = \frac 12$.
In order to overcome this difficulty, 
we decompose the nonlinearity into three pieces, 
depending on the sizes of the modulations $\s_0, \s_1$, and $\s_2$.
Define the sets  $M_j$, $j = 0 , 1, 2$,  by
\begin{align}
\begin{split}
M_0 &= \big\{(n, n_1, n_2, \tau, \tau_1, \tau_2) \in \mathbb{Z}_*^3 \times \R^3:
\s_0 = \MAX \big\}, \\
M_j &= \big\{(n, n_1, n_2, \tau, \tau_1, \tau_2) \in \mathbb{Z}_*^3 \times \R^3:
\s_j = \MAX \text{ and } \s_j > 1\big\}, \quad j = 1, 2.
\end{split}
\label{DD5}
\end{align}

\noi
For $j =  0, 1, 2$, 
let  $\N_j(u_1, u_2)$ be the contribution of  $\N(u_1, u_2)$ on $M_j$, 
and thus we have
\begin{align}
\begin{split}
\N(u_1,u_2)
& =\sum_{j=0}^2\N_j(u_1,u_2).
\end{split}
\label{NN2}
\end{align}

\noi
The standard bilinear estimate \eqref{bilin1} 
 allows us to estimate
 $\N_0(u_1,u_2)$ even when $b < \frac 12$;
 see \cite[eq.\,(46)]{OH4}.
 As for $\N_j(u_1, u_2)$, $j = 1, 2$,  however, 
 the bilinear estimate fails for temporal regularity $b < \frac12$
 (in $X^{s, b}_{p, q}$ for any $s \in \R$ and $1\le p, q\le \infty$)
 since, in this case, we do not have a sufficient power
for  the largest modulation $\s_j$
 to control the derivative loss in the nonlinearity.
 See \cite{KPV4}.

This issue was circumvented in \cite{BO3, OH4, OH6}
by partially iterating the Duhamel formulation~\eqref{DD1}
and writing it as 
 \begin{equation*}
u(t)=S(t)u_0-\frac{1}{2}\N_0(u, u)(t)
+ \frac 14 \N_1(\N(u, u), u)
+ \frac 14 \N_2(u, \N(u, u)).
\end{equation*}

\noi
Namely, for $j = 1, 2$, 
we replaced the $j$th entry in $\N_j(u, u)$
(where  the maximum modulation is given by $\s_j$)
by its Duhamel formulation \eqref{DD1}.
It follows from the definition of $\N_j$ (see~\eqref{DD5})
that there is no contribution from the linear solution $S(t)u_0$
in iterating the Duhamel formulation, 
since its space-time Fourier transform is supported on $\{\tau = n^3\}$, 
namely, $S(t) u_0$ has zero modulation, 
and thus from the definition of $\N_j$, we have  
$ \N_1(S(t)u_0,  u)
= \N_2(u, S(t)u_0) = 0$.

 In the context of SKdV \eqref{KdV1}
and its Duhamel formulation \eqref{KdV3}:
\begin{equation}
u(t)=S(t)u_0-\frac{1}{2}\N(u, u) +\Psi, 
\label{DD7a}
\end{equation}

\noi
 the discussion above leads to 
 \begin{align}
\begin{split}
u(t)& = S(t)u_0-\frac{1}{2}\N_0(u, u)(t)\\
& \quad + \frac 14 \N_1(\N(u, u), u)
- \frac 12 \N_1(\Psi, u)\\
& \quad + \frac 14 \N_2(u, \N(u, u))
- \frac 12 \N_2(u, \Psi) + \Psi, 
\end{split}
\label{DD7}
\end{align}

\noi
where $\Psi$ is the stochastic convolution in \eqref{psi1}.
In \cite{OH4}, the first author studied this new formulation \eqref{DD7}
and establish an a priori bound on solutions
(with smooth initial data and (spatially) smooth noise), 
which allowed him to construct a solution \eqref{KdV3} by an approximation argument.

Lastly, we state an analogous formulation for  the truncated SKdV \eqref{KdV4}.
By writing~\eqref{KdV4} in the Duhamel formulation, we have
\begin{equation}
u^N(t)=S(t)u_0-\frac{1}{2}\N^N(u^N, u^N) +\Psi, 
\label{DD8}
\end{equation}

\noi
where $\N^N(u^N, u^N)$ is given by 
\begin{align*}
\N^N(u_1, u_2)
& = \P_N \N(\P_N u_1, \P_N u_2)\\
& =  \int_0^t S(t-t')\partial_x \P_N (\P_N u_1 \P_N u_2)(t')d t.
\end{align*}

\noi
Then, by partially iterating the Duhamel formulation as above, 
we rewrite \eqref{DD8} as 
 \begin{align}
 \begin{split}
u^N(t)& = S(t)u_0-\frac{1}{2}\N^N_0(u^N, u^N)(t)\\
& \quad + \frac 14 \N^N_1(\N^N(u^N, u^N), u^N)
- \frac 12 \N_1^N(\Psi, u^N)\\
& \quad + \frac 14 \N_2^N(u^N, \N^N(u^N, u^N))
- \frac 12 \N_2^N(u^N, \Psi) + \Psi
\end{split}
\label{DD9}
\end{align}

\noi 
where $\N_j^N(u_1, u_2)$
is the contribution of  $\N^N(u_1, u_2)$ on $M_j$, $j = 0, 1, 2$.

\subsection{Local well-posedness
and an a priori bound}
\label{SUBSEC:3b}

In this subsection, we collect 
the useful nonlinear estimates
on the iterated formulation \eqref{DD7}
from \cite{OH4}, 
and establish an a priori bound
for solutions to the truncated SKdV \eqref{KdV4}.

We first recall the following local well-posedness result of SKdV \eqref{KdV1} from~\cite{OH4}.

\begin{theorem}\label{THM:hiro}
  Let $s=-\frac{1}{2}+\delta$ 
and   $p = 2 + \dl_0$ 
for some small $\dl, \dl_0 > 0$ such that 
 $\frac{p-2}{4p}<\delta<\frac{p-2}{2p}$. 
Given a mean-zero function $u_0\in \widehat{b}^s_{p,\infty}(\mathbb{T})$, 
there  exist a stopping time $T_\omega>0$ and a unique solution 
$u\in C([0,T_\omega];\widehat{b}^s_{p,\infty}(\mathbb{T})) \cap X^{s, \frac 12 - \dl}_{p, 2}([0, T_\o])$ 
to \eqref{KdV1} with $u|_{t = 0} = u_0$.
Furthermore, denoting by $T_* = T_*(\o)$ 
the maximal time of existence, 
we have the following blowup alternative\textup{:}
\begin{align*}
\lim_{t \nearrow T_*}\| u(t) \|_{\ft b^s_{p, \infty}} = \infty
\qquad 
\text{or}\qquad  T_* = \infty.
\end{align*}

\noi

\end{theorem}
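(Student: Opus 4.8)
The plan is to prove local well-posedness via a contraction mapping argument applied to the partially iterated Duhamel formulation \eqref{DD7}, working in a suitable closed ball of the space $Z^{s}_{p}([0,T]) := X^{s,\frac12 - \dl}_{p,2}([0,T]) \cap X^{s,0}_{p,1}([0,T])$, intersected with the stochastic-convolution translate. Rather than solving for $u$ directly, I would set $u = \Psi + v$, where $\Psi$ is the stochastic convolution \eqref{psi1}, and solve the fixed-point problem for the smoother remainder $v$. The reason is that $\Psi$ itself only lives in $\ft b^{s}_{p,\infty}$ with $b < \frac12$ temporal regularity (Lemma \ref{LEM:sto1}), so it cannot be accommodated in a space requiring $b > \frac12$; isolating it lets the deterministic nonlinear analysis carry the regularity. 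The data contribution $S(t)u_0$ is handled by the linear estimate \eqref{lin1}, which supplies the crucial $T^{\frac12 - b}$ smalling factor.

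The core of the argument is the collection of multilinear estimates on the terms of \eqref{DD7}. First I would control $\N_0(u,u)$ using the standard bilinear estimate \eqref{bilin1} together with the embeddings \eqref{embed0}--\eqref{embed0a}, which is where the constraint $\dl > \frac{p-2}{4p}$ enters, and the $L^4$-Strichartz estimate \eqref{L4} to absorb the modulation mismatch when passing from $b = \frac12$ to $b = \frac12 - \dl$. For the high-modulation pieces $\N_1, \N_2$, which fail the bilinear estimate at $b < \frac12$, the point of the partial iteration is that the large modulation $\s_j \gtrsim \jb{n n_1 n_2}$ from \eqref{max} is now \emph{inside} a further Duhamel integral, so that after substituting \eqref{DD1} we gain enough powers of the maximal modulation to compensate the derivative loss in $\dx$. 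Each of these trilinear terms (and the mixed terms $\N_1(\Psi,u)$, $\N_2(u,\Psi)$ involving the stochastic convolution) must be estimated so as to close in $Z^{s}_{p}$ while gaining a positive power of $T$; I would quote the relevant estimates from \cite{OH4} since they are established there.

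The stopping time $T_\o$ arises because the stochastic convolution $\Psi$ is only almost surely finite in the relevant norm: one sets $T_\o$ to be the first time at which $\|\Psi\|_{Z^{s}_{p}([0,T])}$ exceeds a deterministic threshold, which is a genuine stopping time by the adaptedness of $\Psi$, and on $[0,T_\o]$ the contraction constant can be made strictly less than $1$. The blowup alternative then follows from a standard continuation argument: if $T_* < \infty$ and $\limsup_{t \nearrow T_*}\|u(t)\|_{\ft b^s_{p,\infty}} < \infty$, one could restart the local theory from times approaching $T_*$ with a uniform existence time, contradicting maximality. The main obstacle is verifying that the trilinear estimates genuinely close in the $p > 2$ Fourier-Besov setting with $s$ only slightly above $-\frac12$ — the interplay between the Besov summability $b^0_{p,\infty}$ in the spatial frequency, the Fourier-Lebesgue temporal integrability, and the derivative gain from the iterated modulation is delicate — but since Theorem \ref{THM:hiro} is quoted verbatim from \cite{OH4}, the proof is by reference to that work rather than a self-contained derivation.
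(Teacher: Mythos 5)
There is a genuine gap, and it is precisely the point the paper flags in the introduction: the result you are asked to prove is quoted from \cite{OH4}, and the paper emphasizes that the argument there ``is based on an approximation argument, in particular, not based on a contraction argument.'' Your plan runs a fixed-point iteration on the partially iterated formulation \eqref{DD7}, but this founders on the equivalence of formulations. The passage from the mild formulation \eqref{KdV3} to \eqref{DD7} consists of substituting the Duhamel formula into the first (resp.\ second) argument of $\N_1$ (resp.\ $\N_2$), a step valid only for functions that already solve \eqref{KdV3}. Conversely, if $u$ is a fixed point of the map defined by the right-hand side of \eqref{DD7}, set $w = u - \bigl(S(t)u_0 - \tfrac12 \N(u,u) + \Psi\bigr)$; unwinding the algebra (using bilinearity and $\N_1(S(t)u_0, u) = \N_2(u, S(t)u_0) = 0$) yields
\begin{align*}
w = \tfrac 12 \N_1(w, u) + \tfrac12 \N_2(u, w).
\end{align*}
To conclude $w = 0$ one needs bilinear estimates for $\N_1$ and $\N_2$ with the maximal modulation sitting on the $w$-factor, at temporal regularity $b = \tfrac12 - \dl < \tfrac12$ --- and these are exactly the estimates whose failure motivated the partial iteration in the first place (see the discussion following \eqref{NN2}). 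So the fixed point produced by your contraction is a solution of \eqref{DD7} but is not known to solve \eqref{KdV1}, and the scheme does not yield the theorem as stated.

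What \cite{OH4} (and Section \ref{SEC:LWP} of this paper, which recalls it) does instead is to use \eqref{DD7} only as an identity satisfied by \emph{genuine} solutions: smooth solutions (smooth data and spatially smooth noise) satisfy both \eqref{KdV3} and \eqref{DD7}; the iterated identity then gives the a priori bound \eqref{nonlin1} and a companion difference estimate (\cite[eq.~(74)]{OH4}) for such solutions, and the rough-data solution is constructed as a limit of smooth solutions, with uniqueness coming from the difference estimate applied to two solutions of \eqref{KdV3} (for which the re-iteration step is legitimate). Two further remarks: the Da Prato--Debussche splitting $u = \Psi + v$ you propose is not used in \cite{OH4} and does not by itself repair anything, since the white-noise initial data is just as rough as $\Psi$ and the problematic high-modulation bilinear estimate fails regardless of this splitting; and your closing appeal ``the proof is by reference to \cite{OH4}'' does not save the sketch, because the mechanism you describe is not the mechanism of that work. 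The remaining ingredients you list --- the role of $\dl > \frac{p-2}{4p}$ via \eqref{embed0a}, the $L^4$-Strichartz estimate \eqref{L4} for $\N_0$, the modulation gain \eqref{max} in the trilinear terms, and the standard continuation argument for the blow-up alternative --- are consistent with the paper.
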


Here, the condition $\delta<\frac{p-2}{2p}$ is equivalent to   $sp<-1$, 
while $\dl > \frac{p-2}{4p}$ is used for the embedding \eqref{embed0a}.

In the following, we recall some of the nonlinear estimates from \cite{OH4}.
In the remaining part of the paper, 
we fix small $\dl, \dl_0>0$, satisfying the hypothesis in Theorem \ref{THM:hiro}, 
and set 
\begin{align}
 \al = \frac 12 - \dl
 \label{al}
\end{align} 

\noi
as in \cite{BO3, OH4}.
The following discussion applies to both the original SKdV \eqref{KdV1}
and its truncated version \eqref{KdV4}.
In order to treat them in a uniform manner, 
we take 
$N \in \Z_{\ge 0}$
and 
set 
$u^\infty = u$, $\N^\infty(u_1, u_2) = \N(u_1, u_2)$, 
and $\N^\infty_j(u_1, u_2) = \N_j(u_1, u_2)$, $j = 0, 1, 2$.
Note that, in the following,  all the estimates hold, 
uniformly in $N \in\Z_{\ge 0}$.

Given $T > 0$, 
we define the random quantity $L_\omega(T)$  by
\begin{align}
  L_\omega(T)&= 
  \|\ind_{[0, T]}\Psi\|_{X^{-\frac 12 - \frac 12 \dl, \frac 12 -\dl}}
+ 
\|\ind_{[0, T]}\Psi\|_{Y^{-\frac 12 - \frac 12 \dl, \frac {11}{16} +\dl}_{2, 4}}, 
\label{F1}
\end{align}

\noi
which is a pathwise\footnote{In \cite[``Estimate on (ii)'' on pp.\,296-297]{OH4}, 
an expectation was taken on 
 the $X^{-\al, 1-\al, T}$-norms of 
 $\N_1(\Psi, u)$.  However, we in fact need a pathwise bound, which 
is  established in Appendix \ref{SEC:B}.} upper bound 
for the $X^{-\al, 1-\al, T}$-norms of 
 $\N_1(\Psi, u)$ in~\eqref{DD7}
and $\N_1^N(\Psi, u^N) $ in~\eqref{DD9}.
See Appendix \ref{SEC:B}.
We point out that, while the analysis in Appendix~\ref{SEC:B}
(see \eqref{BX2} and \eqref{BX3})
yields the spatial regularity $- \frac 12 - \dl$, 
we use a slightly worse spatial regularity for the definition of  $L_\o(T)$
in \eqref{F1}
(so that the estimate \eqref{F4} below holds, allowing us to gain a decay in $N$).
Note that, in contrast to~\cite{OH4}, we defined~$L_\omega(T)$ on the  ``long'' interval $[0,T]$. 
This will be useful in Sections \ref{SEC:prob} and \ref{SEC:5}
 when we iterate the local-in-time argument
 on many small subintervals of $[0,T]$ but with a fixed driving space-time white noise.
From \eqref{F1} and 
Remark \ref{REM:psi1} below, 
we have
\begin{align}
\| L_\o(T)\|_{L^r(\O)} \les \sqrt r\, T^{\frac 32}
\label{O0}
\end{align}

\noi
for any $T> 0$ and $1 \le r < \infty$, 
provided that $\dl > 0$ is sufficiently small 
such that 
\[\bigg(\frac {11}{16} +\dl - 1\bigg) 4 < -1.\]

With this notation, 
the main nonlinear estimate \cite[eq.\,(73)]{OH4} 
(see also  Appendix~\ref{SEC:B}) reads
(with some small $\ta > 0$)
\begin{equation}
\begin{split}
\|u^N\|_{X^{-\alpha,\alpha,T_1}_{p,2}}
& \le
C_1\|u_0^N\|_{\widehat{b}^{-\alpha}_{p,\infty}}
+\frac{1}{2}C_2T_1^{\theta}\|u^N\|_{X^{-\alpha,\alpha,T_1}_{p,2}}^2
+2C_3T_1^{\theta}\|u^N\|^3_{X^{-\alpha,\alpha,T_1}_{p,2}}\\
&\quad +2C_3T_1^{\theta}L_\omega(T) \|u^N\|_{X^{-\alpha,\alpha,T_1}_{p,2}}+C_4\| \Psi\|_{X_{p,2}^{-\alpha,\alpha,T_1}}
  \end{split}
\label{nonlin1}
\end{equation}

\noi
for  any  $T> 0$ and $0 <  T_1 \le \min(1, T)$, 
provided that 
\begin{align}
C_3 T_1^\ta R \le \frac 12
\qquad \text{and} 
\qquad \|u^N\|_{X^{-\alpha,\alpha,T_1}_{p,2}} \le R.
\label{nonlin1a}
\end{align}

\noi
Here, in importing \eqref{nonlin1}
from \cite{OH4}, we use the fact that $\P_N$ is bounded on relevant function spaces, uniformly in $N \in \NB$.
The nonlinear estimate \eqref{nonlin1}
together with an analogous difference estimate (\cite[eq.\,(74)]{OH4})
allows us to construct a solution $u \in 
X^{-\alpha,\alpha,T_1}_{p,2}$ to \eqref{KdV1}
as a limit of smooth solutions.

Next, we  estimate the $\ft b^{-\al}_{p, \infty}$-norm  of
a solution $u^N(t)$ to \eqref{DD8}.
Since $\al < \frac 12$, the embedding \eqref{embed1} does not hold
and thus \eqref{nonlin1} is not directly applicable.
However, some  terms can be estimated in a stronger norm.
Indeed, from \eqref{embed1}, \eqref{embed0},  and \cite[eq.\,(47) and (72)]{OH4}, we have, 
under the condition \eqref{nonlin1a},  
\begin{align}
\begin{split}
\| \N_1^N(& u^N, u^N) + \N_2^N(u^N, u^N)\|_{C([0, T_1]; \ft b^{-\al}_{p, \infty})}\\
& \les 
\| \N_1^N( u^N, u^N) + \N_2^N(u^N, u^N)\|_{X^{-\alpha,1-\al,T_1}}\\
& \le 
2C_3\Big(T_1^{\theta}\|u^N\|^3_{X^{-\alpha,\alpha,T_1}_{p,2}}
 + T_1^{\theta}L_\omega(T) \|u^N\|_{X^{-\alpha,\alpha,T_1}_{p,2}}
 \Big)
 \end{split}
\label{nonlin2}
\end{align}

\noi
for  any  $T> 0$ and $0 <  T_1 \le \min(1, T)$, 
where the first inequality follows from \eqref{embed1} since 
 $b = 1-\al = \frac 12 + \dl> \frac 12$.
As for $\N_0^N$, we write it as 
\begin{align*}
\N_0^N( u^N, u^N) = \N_3^N( u^N, u^N)+ \N_4^N( u^N, u^N),
\end{align*}

\noi
where $\N_3^N$ denotes the contribution of $\N_0^N$ on $\{\max(\s_1, \s_2) \ges \jb{n n_1n_2}^\frac{1}{100} \}$.
Then, from 
\eqref{embed1}, 
\eqref{embed2}, 
and 
\cite[(a) and (b) on p.\,302)]{OH4}, we have 
\begin{align}
\begin{split}
\| \N_0^N(u^N, u^N)\|_{C([0, T_1]; \ft b^{-\al}_{p, \infty})}
& \les \| \N_3^N(u^N, u^N)\|_{X_{p, 2}^{-\alpha,1-\al,T_1}}
+ \| \N_4^N(u^N, u^N)\|_{X_{p, 1}^{-\alpha,0,T_1}}\\
& \les 
T_1^\ta
\|u^N \|^2_{X^{-\alpha,\alpha,T_1}_{p,2}}.
\end{split}
\label{nonlin3}
\end{align}

\noi
Hence, putting \eqref{DD8}, \eqref{nonlin2}, and \eqref{nonlin3}
together, we obtain
\begin{align}
\begin{split}
\|u^N\|_{C([0, T_1]; \ft{b}^{-\al}_{p,\infty})}
&\le \|u_0\|_{\widehat{b}^{-\alpha}_{p,\infty}} 
+ C_5T_1^{\theta}\|u^N\|^2_{X_{p,2}^{-\alpha,\alpha,T_1}} \\
&\quad +C_6 T_1^{\theta}\|u^N\|^3_{X_{p,2}^{-\alpha,\alpha,T_1}}
+C_7 T_1^{\theta}L_\o(T)\|u^N\|_{X_{p,2}^{-\alpha,\alpha,T_1}}\\
& \quad +\|\Psi\|_{C([0, T_1]; \ft{b}^{-\al}_{p,\infty})}
\end{split}
\label{nonlin4}
\end{align}

\noi
for  any  $T> 0$ and $0 <  T_1 \le \min(1, T)$, 
provided that  \eqref{nonlin1a} holds.

\medskip

We now state an a priori bound on a solution $u^N$
to the truncated SKdV \eqref{KdV4}.

\begin{lemma}\label{LEM:nonlin2}

Let $N \in \NB$.
Then, there exist absolute  constants $\g > 0$ 
and $C_*, c_*  > 0 $ such that, 
given any $T> 0$, we have
\begin{equation}\label{bd1}
\|u^N\|_{X^{-\alpha,\alpha}_{p,2}(I)}
\le C_* \Big(\|u^N(t_0)\|_{\widehat{b}^{-\alpha}_{p,\infty}}
+\| \Psi\|_{X^{-\alpha,\alpha}_{p,2}([t_0, t_0+1])}  + 1\Big)
=: R_\o(t_0)
\end{equation}

\noi
for any time interval $I = [t_0, t_0 + T_1] \subset [0, T]$ of length $T_1 \le 1$
and
any solution  $u^N$ to the truncated SKdV \eqref{KdV4}, 
provided that 
 \begin{align}
  T_1\le c_* (R_\o(R_\o+1)+L_\o(T))^{-\gamma}.
\label{bd1a}
\end{align}

\noi
Here, the constants 
$\g > 0$ 
and $C_*, c_*  > 0 $ are independent of $N \in \NB$.

\end{lemma}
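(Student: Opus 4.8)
The plan is to close the cubic a priori inequality \eqref{nonlin1} by a continuity (bootstrap) argument in the length $T_1$ of the interval, after normalizing the problem. By time-translation invariance of the equation, I would first reduce to the case $t_0 = 0$: writing $\Psi_{t_0}(t) = \int_{t_0}^t S(t-t')\,dW(t')$ for the stochastic convolution based at $t_0$ and taking $u^N(t_0)$ as the new initial datum, the solution on $[t_0, t_0+T_1]$ satisfies the partially iterated Duhamel formulation \eqref{DD9} with $u_0 = u^N(t_0)$ and $\Psi = \Psi_{t_0}$. Setting $A = \|u^N(t_0)\|_{\widehat b^{-\alpha}_{p,\infty}}$ and $B = \|\Psi_{t_0}\|_{X^{-\alpha,\alpha}_{p,2}([t_0,t_0+1])}$, I define $R_\o(t_0) = C_*(A + B + 1)$ for a constant $C_*$ to be fixed, and note that for $0 < T_1 \le 1$ the restriction-norm monotonicity gives $\|\Psi_{t_0}\|_{X^{-\alpha,\alpha,T_1}_{p,2}} \le B$, so the last term of \eqref{nonlin1} is bounded by $C_4 B$. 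This is precisely why $B$ is measured on the full unit interval $[t_0,t_0+1]$: it dominates that term uniformly over all admissible $T_1$.

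For the continuity argument I would consider $g(T_1) = \|u^N\|_{X^{-\alpha,\alpha}_{p,2}([t_0, t_0+T_1])}$, which is non-decreasing and continuous in $T_1$, with $g(0^+)=0$; the latter holds because the linear estimate \eqref{lin1} supplies a factor $T_1^{1/2-\alpha} = T_1^\delta \to 0$ and the Duhamel and stochastic-convolution contributions vanish likewise as $T_1 \to 0$ (using $b < \tfrac12$). I would run the bootstrap on the set $\{T_1 : g(T_1) \le 2R_\o\}$. On this set the smallness hypothesis \eqref{nonlin1a} with $R = 2R_\o$, namely $C_3 T_1^\theta (2R_\o) \le \tfrac12$, is guaranteed by \eqref{bd1a} once $c_*$ is taken small; thus \eqref{nonlin1} applies, and inserting $\|u^N\|_{X^{-\alpha,\alpha,T_1}_{p,2}} \le 2R_\o$ into its monotone right-hand side yields
\begin{align*}
g(T_1) &\le C_1 A + C_4 B + \tfrac12 C_2 T_1^\theta (2R_\o)^2 \\
&\quad + 2 C_3 T_1^\theta (2R_\o)^3 + 2 C_3 T_1^\theta L_\o(T)\,(2R_\o).
\end{align*}

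Two choices then force the right-hand side below $R_\o$. First, fixing $C_* \ge 2\max(C_1, C_4)$ gives $C_1 A + C_4 B \le \tfrac12 R_\o$, since $A + B < A + B + 1 = R_\o/C_*$. Second, dividing the three nonlinear terms by $R_\o$ bounds them by $T_1^\theta\big(2C_2 R_\o + 16 C_3 R_\o^2 + 4 C_3 L_\o(T)\big) \les T_1^\theta\big(R_\o(R_\o+1) + L_\o(T)\big)$; taking $\gamma = 1/\theta$ and $c_*$ small enough in \eqref{bd1a} (small enough to simultaneously enforce \eqref{nonlin1a}) makes this at most $\tfrac12$, contributing at most $\tfrac12 R_\o$. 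Hence $g(T_1) \le R_\o$ on the set, a strict improvement over the hypothesis $g \le 2R_\o$; combined with $g(0^+)=0 < 2R_\o$ and continuity of $g$, the standard open--closed argument gives $g \le R_\o$ throughout, which is \eqref{bd1}. The independence of $C_*, c_*, \gamma$ from $N$ is inherited directly from the $N$-uniformity of $C_1,\dots,C_4$ and $\theta$ in \eqref{nonlin1}.

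The step I expect to be most delicate is not the algebra but making the bootstrap logically watertight: one must verify that $g$ is genuinely continuous in the interval length (a standard but slightly technical property of the restriction $X^{s,b}$-norm for $b<\tfrac12$) and that the term-by-term monotone substitution $\|u^N\|_{X^{-\alpha,\alpha,T_1}_{p,2}} \le 2R_\o$ is valid. A secondary point requiring care is the constant bookkeeping that keeps everything independent of $N$ and that pins the exponent in \eqref{bd1a}: the worst (cubic) term dictates $\gamma = 1/\theta$ and forces the quadratic factor $R_\o(R_\o+1)$ together with the additive $L_\o(T)$ coming from the $\N_1(\Psi,u)$-type contribution.
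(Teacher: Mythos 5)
Your proposal is correct and follows essentially the same route as the paper: apply the a priori cubic estimate \eqref{nonlin1} with $T_1$ small as dictated by \eqref{bd1a} (with $\gamma = \theta^{-1}$), control the nonlinear terms under the bootstrap hypothesis, and close via a continuity argument whose key technical ingredient (continuity of the restriction norm in the endpoint, valid since $\alpha < \tfrac12$) is exactly what the paper addresses in Remark \ref{REM:cont}. The only cosmetic difference is that the paper absorbs the $\|u^N\|$-terms into the left-hand side to get the intermediate bound \eqref{bd2}, whereas you bound them by $\tfrac12 R_\o$ directly; both yield the same conclusion.
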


\begin{proof}

Under \eqref{nonlin1a}, 
it follows from \eqref{nonlin1} that 
\begin{equation}
\|u^N\|_{X^{-\alpha,\alpha}_{p,2}(I)}
 \le
2C_1\|u^N(t_0)\|_{\widehat{b}^{-\alpha}_{p,\infty}}
+2C_4\| \Psi\|_{X_{p,2}^{-\alpha,\alpha}(I)}, 
\label{bd2}
\end{equation}

\noi
provided that 
\begin{equation*}
T_1^\ta\bigg(  \frac{1}{2}C_2R+2C_3R^2+2C_3L_\omega(T) \bigg) \le \frac{1}{2}.
\end{equation*}

\noi
Then, under \eqref{bd1a} with $\g = \ta^{-1}$, 
the bound \eqref{bd1} follows from 
\eqref{bd2} and 
 a continuity argument.
\end{proof}

\begin{remark}\label{REM:cont}\rm
In order use a continuity argument
in the proof of Lemma \ref{LEM:nonlin2}
presented above, we need the continuity 
of the $X^{-\al, \al}_{p, 2}([t_0, t_1])$-norm with respect to the right endpoint~$t_1$.
While it may be possible to check this directly
(see, for example, \cite[Appendix A]{BOP2}
and \cite[Lemma 8.1]{GO}), 
let us use the following equivalence:
\begin{align}
\| u \|_{X^{-\al, \al}_{p, 2}([t_0, t_1])} \sim
\| \ind_{[t_0, t_1]}u \|_{X^{-\al, \al}_{p, 2}}, 
\label{equiv}
\end{align}

\noi
where the norm on the left-hand side is defined in \eqref{time1}, 
and study the latter norm.
Recall that 
the equivalence \eqref{equiv}
holds
since the temporal regularity $b = \al = \frac 12 - \dl$ is below $ \frac 12$
(see \cite[eq.\,(3.5)]{CO}).
Such equivalence also holds for the general 
$X^{s, b}_{p, q}([t_0, t_1])$
for $0 \le b < \frac{q-1}{q}$; see \cite{COZ}.

Given small $h > 0$, from the triangle inequality, we have 
\begin{align}
\| \ind_{[t_0, t_1 + h ]}u \|_{X^{-\al, \al}_{p, 2}}
- \| \ind_{[t_0, t_1]}u \|_{X^{-\al, \al}_{p, 2}}
\le
\| \ind_{[t_1, t_1 + h ]}u \|_{X^{-\al, \al}_{p, 2}},
\label{equiv2}
\end{align}

\noi
and thus it suffices to show that the right-hand side of \eqref{equiv2}
tends to $0$ as $h \to 0$.
In view of the definition
\eqref{Xsb2}, such a claim follows once we prove 
\begin{align}
\lim_{h \to 0}\| \ind_{[t_1, t_1 + h ]}f \|_{H^\al} = 0
\label{equiv3}
\end{align}

\noi
for a function $f \in H^\al(\R)$.
Obviously, we have $\lim_{h \to 0}\| \ind_{[t_1, t_1 + h ]}f \|_{L^2} = 0$.
Using the physical side characterization of the homogeneous Sobolev norm, 
we have
\begin{align*}
\| \ind_{[t_1, t_1 + h ]}f \|_{H^\al}^2
& = 
\int_{\R}\int_{\R} \frac{|\ind_{[t_1, t_1+h]}(t)f(t) - \ind_{[t_1, t_1+h]}(\tau)f(\tau)|^2}{|t - \tau|^{1 + 2\al}}dt d\tau\\
& =  \1(h) + \II(h) + \III(h),
\end{align*}

\noi
where $\1$, $\II$, and $\III$ are defined by 
\begin{align*}
\1(h) & = \int_{[t_1, t_1 + h]}
\int_{[t_1, t_1 + h]}
 \frac{|f(t) - f(\tau)|^2}{|t - \tau|^{1 + 2\al}}dt d\tau, \\
\II(h)& = \int_{[t_1, t_1 + h]}
\int_{[t_1, t_1 + h]^c}
 \frac{|f(t)|^2}{|t - \tau|^{1 + 2\al}}d\tau dt, \\
 \III(h)& = \int_{[t_1, t_1 + h]}
\int_{[t_1, t_1 + h]^c}
 \frac{|f(\tau)|^2}{|t - \tau|^{1 + 2\al}}dt d\tau .
\end{align*}

\noi
By the dominated convergence theorem
with the fact that $f \in H^\al(\R)$, 
we see that $\lim_{h \to \infty} \1(h) = 0$.
As for $\II(h)$, integration in $\tau$ yields
\begin{align*}
\II(h)& \sim  \int_{[t_1, t_1 - h]}
 \frac{|f(t)|^2}{|t - t_1 -h|^{ 2\al}} dt
 + 
 \int_{[t_1, t_1 + h]}
 \frac{|f(t)|^2}{|t - t_1 |^{ 2\al}} dt\\
 & \les \|f \|_{\dot H^\al(\R)}, 
\end{align*}

\noi
where the second step follows from 
Hardy's  inequality (\cite[Lemma A.2]{Tao}) 
since $0 \le \al < \frac 12$.
Noting that $\III(h) = \II(h)$, we see that the term $\III(h)$ 
also satisfies the bound above.
Also, the case $h < 0$ follows from an analogous consideration.
Putting everything together, 
we conclude \eqref{equiv3}.
See also Lemma 4.4 in \cite{Bring}.

\end{remark}

   \medskip

We conclude this section by 
stating a  lemma on growth
of the stochastic convolution~$\Psi$ in~\eqref{psi1}
over long time intervals.
We point out that analogous regularity results
were obtained in \cite[Propositions 4.1 and 4.5]{OH4}
but they are only for short times.

\begin{lemma}\label{LEM:sto1}

Let $s < 0$ and $1 \le p, q < \infty$ such that $s p < -1$.

\smallskip

\noi
\textup{(i)}
Let $(b-1) q < -1$. Given any $1 \le r < \infty$ and $T \ge1 $, we have 
\begin{align}
\Big\| \| \Psi \|_{X^{s, b}_{p, q}(I)}\Big\|_{L^r (\O)} \le 
\Big\| \| \Psi \|_{Y^{s, b}_{p, q}(I)}\Big\|_{L^r (\O)} \les 
\sqrt {r  T}
\label{sto1}
\end{align}

\noi
for any interval $I \subset [0, T] $ with length $|I| \le 1$, 
where the implicit constant is independent of $r$ and $T$.

\medskip

\noi
\textup{(ii)}
Given any $1 \le r  < \infty$ and $T \ge1 $, we have 
\begin{align}
\Big\| \| \Psi \|_{C([0, T]; \ft b^s_{p, \infty})}\Big\|_{L^r (\O)} \les 
\sqrt {r  T \log T}, 
\label{sto2}
\end{align}

\noi
where the implicit constant is independent of $r $ and $T$.

\end{lemma}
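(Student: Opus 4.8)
The plan is to handle the two parts separately, in both cases exploiting that $\Psi$ is Gaussian (linear in the Brownian motions $\{\be_n\}$) and that, in the interaction representation, its Fourier modes become time-changed complex Brownian motions with an $n$-independent covariance. For part (i), I would first reduce to a second-moment computation. Fix $I=[t_0,t_0+h]\subset[0,T]$ with $h\le1$, let $\chi$ be a smooth bump supported in $[0,2]$ with $\chi\equiv1$ on $[0,1]$, and set $w(t):=\chi(t-t_0)\Psi(t)$, an admissible extension of $\Psi|_I$ so that $\|\Psi\|_{Y^{s,b}_{p,q}(I)}\le\|w\|_{Y^{s,b}_{p,q}}$ (the first inequality in \eqref{sto1} being the embedding \eqref{FL3}). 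Since $\ft w(n,\tau)$ is a complex Gaussian for each $(n,\tau)$, Minkowski's inequality (valid once $r\ge\max(p,q)$, the range $1\le r<\max(p,q)$ being absorbed into the constant since $r\ge1$ and $T\ge1$) moves the $L^r(\O)$-norm inside $\l^p_nL^q_\tau$, and the Gaussian bound $\|\ft w(n,\tau)\|_{L^r(\O)}\les\sqrt r\,\|\ft w(n,\tau)\|_{L^2(\O)}$ yields
\begin{equation*}
\big\|\,\|\Psi\|_{Y^{s,b}_{p,q}(I)}\big\|_{L^r(\O)}\les\sqrt r\,\big\|\jb{n}^s\jb{\tau-n^3}^{b}\,\|\ft w(n,\tau)\|_{L^2(\O)}\big\|_{\l^p_nL^q_\tau}.
\end{equation*}

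The core of part (i) is then the covariance estimate. Writing $\ft\Psi(n,t)=e^{itn^3}\wt\be_n(t)$ with $\wt\be_n(t)=\int_0^t e^{-it'n^3}\,d\be_n(t')$, the It\^o isometry together with \eqref{W1a} gives the $n$-independent identity $\E[\wt\be_n(t)\cj{\wt\be_n(s)}]=\min(t,s)$ for $t,s\ge0$. Substituting this into $\E|\ft w(n,\tau)|^2$ and changing variables $t=t_0+a$, $s=t_0+c$, I would split $\min(t,s)=t_0+\min(a,c)$: the $t_0$-part equals $t_0\,|\ft\chi(\tau-n^3)|^2\les T\jb{\tau-n^3}^{-2M}$ by the rapid decay of $\ft\chi$, while the $\min(a,c)$-part has only the decay $\jb{\tau-n^3}^{-2}$ coming from the corner of $\min(a,c)$. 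Hence $\|\ft w(n,\tau)\|_{L^2(\O)}\les\sqrt T\,\jb{\tau-n^3}^{-1}$ uniformly in $n$, and the remaining $\l^p_nL^q_\tau$-norm equals $\sqrt T\,\|\jb{n}^s\jb{\tau-n^3}^{b-1}\|_{\l^p_nL^q_\tau}\les\sqrt T$, where the $L^q_\tau$-integral converges precisely because $(b-1)q<-1$ and the sum over $n\in\Z_*$ because $sp<-1$. Combining the two displays gives \eqref{sto1}.

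For part (ii), the obstruction is that $X^{s,b}_{p,q}(I)\subset C(I;\ft b^s_{p,\infty})$ requires $(b-1)q>-1$, which is incompatible with the condition $(b-1)q<-1$ needed in (i); thus (i) cannot be combined with a time-Sobolev embedding and a direct argument is required. Since $S(-t)$ is an isometry on $\ft b^s_{p,\infty}$, one has $\|\Psi(t)\|_{\ft b^s_{p,\infty}}=\|v(t)\|_{\ft b^s_{p,\infty}}$ with $\ft v(n,t)=\wt\be_n(t)$; by L\'evy's characterization each $\wt\be_n$ is again a complex Brownian motion, and since independence and the reality constraint $\wt\be_{-n}=\cj{\wt\be_n}$ are preserved, $v\overset{d}{=}W$ as processes. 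On a unit interval $I_k=[k,k+1]\subset[0,T]$ I would then estimate $\sup_{t\in I_k}\|W(t)\|_{\ft b^s_{p,\infty}}\le\|W(k)\|_{\ft b^s_{p,\infty}}+\sup_{t\in[0,1]}\|W(t)-W(k)\|_{\ft b^s_{p,\infty}}$: the first term is distributed as $\sqrt k\,\|u_0^\o\|_{\ft b^s_{p,\infty}}$, with $L^r(\O)$-norm $\les\sqrt{kr}\le\sqrt{rT}$ by Lemma \ref{LEM:tail}, while the second has the law of $\sup_{t\in[0,1]}\|W(t)\|_{\ft b^s_{p,\infty}}$, a $T$-independent quantity. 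The latter I would control by the Kolmogorov continuity theorem in $\ft b^s_{p,\infty}$, using that $\|W(t)-W(t')\|_{\ft b^s_{p,\infty}}\overset{d}{=}|t-t'|^{1/2}\|u_0^\o\|_{\ft b^s_{p,\infty}}$ has $L^r(\O)$-norm $\les\sqrt r\,|t-t'|^{1/2}$ (again by Lemma \ref{LEM:tail}); this gives $\big\|\sup_{t\in I_k}\|\Psi(t)\|_{\ft b^s_{p,\infty}}\big\|_{L^r(\O)}\les\sqrt{rT}$.

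Finally, covering $[0,T]$ by the $\lceil T\rceil$ unit intervals $I_k$ and writing $M_k=\sup_{t\in I_k}\|\Psi(t)\|_{\ft b^s_{p,\infty}}$, I would upgrade the per-interval bound $\|M_k\|_{L^{r'}(\O)}\les\sqrt{r'T}$ to the maximum via $\|\max_k M_k\|_{L^r(\O)}\le(\lceil T\rceil)^{1/r'}\max_k\|M_k\|_{L^{r'}(\O)}$ for $r'\ge r$, optimizing with $r'\sim r+\log T$ so that $(\lceil T\rceil)^{1/r'}=O(1)$; this produces $\les\sqrt{(r+\log T)T}\les\sqrt{rT\log T}$ for $T\ge2$ (the range $1\le T\le2$ being handled by the single-interval estimate), which is \eqref{sto2}. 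The main obstacle is precisely the borderline incompatibility noted at the start of part (ii): the reduction of $\Psi$ to the plain Wiener process $W$ together with the unit-interval maximal inequality is what converts the failure of the $C_t$-embedding into the mild $\sqrt{\log T}$ loss.
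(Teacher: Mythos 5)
Your proposal is correct. For part (i) it is essentially the paper's argument (Appendix \ref{SEC:A}): reduction to a second-moment computation via Minkowski's inequality for $r \ge \max(p,q)$ and the Gaussian moment bound, followed by the It\^o isometry. The only real difference is the choice of extension: you use the smooth cutoff $\chi(\cdot - t_0)\Psi$ where the paper uses the sharp cutoff $\ind_I \Psi$, so your key decay $\les \sqrt{T}\, \jb{\tau - n^3}^{-1}$ for the pointwise $L^2(\O)$-norm comes from the splitting $\min(t,s) = t_0 + \min(a,c)$ and the $\jb{\tau-n^3}^{-2}$ Fourier decay produced by the corner of $\min(a,c)$, instead of the paper's stochastic-Fubini representation \eqref{AP1} and the elementary bound \eqref{AP2}; both computations are valid and of comparable length.

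Part (ii) is where you genuinely diverge. The paper runs a multi-scale chaining argument directly on $[0,T]$: dyadic grids of spacing $2^{-k}$, increments of the interaction representation $\Phi$ controlled by the It\^o isometry, and maxima over the $\sim 2^k T$ grid points absorbed by working with $L^{q_k}(\O)$-moments, $q_k \sim \log (2^k T) + r$. You instead observe that, by L\'evy's characterization (together with preservation of independence and of the constraint $\wt{\be}_{-n} = \cj{\wt{\be}_n}$), the process $\Phi$ has the same law as the cylindrical Wiener process $W$ itself; this reduces \eqref{sto2} to a statement about $W$, which you prove by a unit-interval decomposition, stationarity of increments, Fernique's theorem via Lemma \ref{LEM:tail}, Kolmogorov continuity on a unit interval, and a high-moment ($r' \sim r + \log T$) union bound over the $\sim T$ intervals. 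Your route is more modular, and it buys an insight the paper's proof obscures: once $\Phi$ and $W$ are identified in law, Brownian scaling shows that $\sup_{t \in [0,T]} \|W(t)\|_{\ft b^s_{p,\infty}}$ has the same law as $\sqrt{T} \sup_{t \in [0,1]} \|W(t)\|_{\ft b^s_{p,\infty}}$, so your reduction in fact yields the sharper bound $\les \sqrt{rT}$ with no logarithm at all --- the $\sqrt{\log T}$ in \eqref{sto2} is an artifact of the interval-by-interval union bound, in your argument and in the paper's chaining alike. Two points to tighten: when invoking Kolmogorov's continuity theorem you must track the $r$-dependence of its constant to justify the factor $\sqrt{r}$ (e.g.\ run the dyadic chaining with moment exponent $\max(r,4)$, so that the geometric sum $\sum_{k} 2^{-k(\frac12 - \frac1{r})}$ stays uniformly bounded); and your treatment of $1 \le T \le 2$ inherits a cosmetic defect of the statement itself, namely that $\sqrt{rT\log T}$ vanishes at $T = 1$, so \eqref{sto2} must be read with $\log T$ replaced by $1 + \log T$ (the paper's own proof sidesteps this by assuming $T \in 2^{\NB}$).
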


We present the proof of Lemma \ref{LEM:sto1} in Appendix \ref{SEC:A}.

\begin{remark}\label{REM:psi1}\rm
We point out that the bound \eqref{sto1} 
holds
only for intervals $I$ of short lengths.
Indeed, 
a slight modification of the proof
yields the following estimate for $I = [0, T]$:
\begin{align}
\Big\| \| \Psi \|_{X^{s, b, T}_{p, q}}\Big\|_{L^r(\O)} \le 
\Big\| \| \Psi \|_{Y^{s, b, T}_{p, q}}\Big\|_{L^r(\O)} \les 
\sqrt r\, T^{\frac 32},
\label{sto3}
\end{align}

\noi
 where the right-hand side is much worse than those in \eqref{sto1}
and \eqref{sto2}.
See Remark \ref{REM:psi2}.

\end{remark}

\section{Probabilistic uniform growth bound}
\label{SEC:prob}

Given $N \in \NB$, let $u^N$ be the global solution to the truncated SKdV \eqref{KdV4}
with the mean-zero white noise initial data $u_0^\o$ in \eqref{series4}.
Our main goal in this section is to establish the following probabilistic growth bound
on the solution $u^N$ to \eqref{KdV1}
whose proof is based on a variant of Bourgain's invariant measure argument
in the current setting of an evolution system of measures
(Proposition \ref{PROP:finite}).

\begin{proposition}\label{PROP:main}

Let $\al$ and $p$ be as in \eqref{al} and Theorem \ref{THM:hiro}, respectively.
Given any  $T\gg1$ and $0 < \eps\ll 1$, there exists a set  $\Omega_{T,\eps}(N)$ such that 
$\mathbb{P}(\Omega_{T,\eps}(N)^c)<\eps$ and
  \begin{align}
    \sup_{t\in [0,T]}\|u^N(t)\|_{\widehat{b}_{p,\infty}^{-\alpha}}&\le  C
 \sqrt {\log \frac 1 \eps}\sqrt{T \log T}
 \label{pro1}
  \end{align}

\noi
on $\Omega_{T,\eps}(N)$, 
where the constant $C > 0$ is independent of $N \in \NB$, $T\gg1$, and $\eps \ll 1$.

\end{proposition}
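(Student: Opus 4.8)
The plan is to implement Bourgain's invariant measure argument with the evolution system of measures of Proposition~\ref{PROP:finite} playing the role of an invariant measure. Fix the target scale
\[
\ld = C_0\sqrt{\log\tfrac1\eps}\,\sqrt{T\log T}
\]
for a large absolute constant $C_0$, and partition $[0,T]$ into $M = \lceil T/T_1\rceil$ subintervals $I_j = [t_j, t_{j+1}]$, $t_j = jT_1$, of a common length $T_1 = T_1(T,\eps)$. I would choose $T_1$ deterministically, small enough that the a priori bound of Lemma~\ref{LEM:nonlin2} is valid on each $I_j$ as soon as $\|u^N(t_j)\|_{\ft b^{-\al}_{p,\infty}}\lesssim\ld$: in that regime $R_\omega(t_j)\lesssim\ld$, so in view of \eqref{bd1a} it suffices to take $T_1\sim(\ld^2+\bar L)^{-\gamma}$, where $\bar L\sim T^{3/2}\sqrt{\log\frac1\eps}$ is a deterministic high-probability bound for $L_\omega(T)$ supplied by \eqref{O0}. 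With this choice $T_1$ is of inverse-polynomial size in $T$, so $M$ is polynomial in $T$ and $\log M\lesssim\log T+\log\log\frac1\eps\lesssim\log T$.

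The probabilistic input is a bound on the solution at the grid points, obtained from the evolution system of measures in place of a conservation law. By Proposition~\ref{PROP:finite} we have $\Law(u^N(t_j))=\mu_{1+t_j}$, so Lemma~\ref{LEM:tail} (applied with unit initial variance and regularity $s=-\al$) gives
\[
\PP\big(\|u^N(t_j)\|_{\ft b^{-\al}_{p,\infty}}>\ld\big)\le C e^{-c\ld^2/(1+t_j)}\le C e^{-c\ld^2/(1+T)},
\]
uniformly in $j$ and in $N$. A union bound over the $M$ grid points yields
\[
\PP\Big(\max_{0\le j\le M}\|u^N(t_j)\|_{\ft b^{-\al}_{p,\infty}}>\ld\Big)\le CM\,e^{-c\ld^2/(1+T)},
\]
which is $<\eps/3$ precisely when $\ld^2\gtrsim(1+T)(\log M+\log\frac1\eps)\lesssim T(\log T+\log\frac1\eps)$, i.e.\ for our choice of $\ld$ with $C_0$ large. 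The feature replacing invariance is that the Gaussian tail holds at every $t_j\in[0,T]$ with the uniform rate $e^{-c\ld^2/(1+T)}$; the variance growth $1+t\sim t$ is exactly what produces the extra factor $\sqrt T$ relative to the classical $\sqrt{\log T}$ bound.

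I would then take $\Omega_{T,\eps}(N)$ to be the intersection of the above grid-point event with the three events
\[
\big\{L_\omega(T)\le\bar L\big\},\quad
\Big\{\max_j\|\Psi\|_{X^{-\al,\al}_{p,2}([t_j,t_j+1])}\lesssim\ld\Big\},\quad
\big\{\|\Psi\|_{C([0,T];\ft b^{-\al}_{p,\infty})}\lesssim\ld\big\}.
\]
Each has complementary probability $<\eps/3$: the first by \eqref{O0} (optimizing the Markov bound over the moment $r$); the second by Lemma~\ref{LEM:sto1}(i), whose $L^r$-bound $\lesssim\sqrt{rT}$ yields the sub-Gaussian tail $e^{-c\ld^2/T}$ and hence survives the union bound over the $M$ intervals within the same budget as the grid bound; the third by Lemma~\ref{LEM:sto1}(ii). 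Thus $\PP(\Omega_{T,\eps}(N)^c)<\eps$, with all constants independent of $N$.

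On $\Omega_{T,\eps}(N)$ the remainder is deterministic. Proceeding from one subinterval to the next, Lemma~\ref{LEM:nonlin2} gives $\|u^N\|_{X^{-\al,\al}_{p,2}(I_j)}\le R_\omega(t_j)\lesssim\ld$; inserting this into \eqref{nonlin4} and using $T_1^\ta\lesssim(R_\omega^2+L_\omega(T))^{-1}$ to absorb the quadratic, cubic, and $L_\omega$-linear terms into $O(R_\omega(t_j))+O(1)$, we obtain
\[
\sup_{t\in I_j}\|u^N(t)\|_{\ft b^{-\al}_{p,\infty}}\lesssim\|u^N(t_j)\|_{\ft b^{-\al}_{p,\infty}}+\|\Psi\|_{X^{-\al,\al}_{p,2}([t_j,t_j+1])}+\|\Psi\|_{C(I_j;\ft b^{-\al}_{p,\infty})}+1\lesssim\ld.
\]
Maximizing over $j$ gives \eqref{pro1}. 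The main obstacle is the self-consistent choice of $(\ld,T_1)$: the local theory forces $T_1$ to shrink as the allowed solution size grows, while the resulting union bound over $M\sim T/T_1$ grid points degrades the probability and pushes $\ld$ up; one must verify that this loop closes, which it does because $M$ enters the required value of $\ld$ only through $\log M\sim\log T$. Uniformity in $N$ of the final constant is inherited from the $N$-uniformity of every ingredient: Lemmas~\ref{LEM:nonlin2}, \ref{LEM:tail}, \ref{LEM:sto1}, the estimate \eqref{nonlin4}, and Proposition~\ref{PROP:finite}.
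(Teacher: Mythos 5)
Your proposal is correct and follows essentially the same route as the paper's proof: the same partition of $[0,T]$ into intervals of length $T_1$, the same use of Proposition \ref{PROP:finite} with Lemma \ref{LEM:tail} and a union bound at the grid points, the same three auxiliary events for $L_\o(T)$ and the stochastic convolution via \eqref{O0} and Lemma \ref{LEM:sto1}, and the same deterministic propagation via Lemma \ref{LEM:nonlin2} and \eqref{nonlin4} with a self-consistent choice of $T_1$. The only differences are cosmetic (a single threshold $\ld$ in place of the paper's $K_1, K_2$, and a $4\eps/3$ probability budget that should read $\eps/4$ per event), neither of which affects the argument.
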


  \begin{proof}
  
Fix small $T_1 > 0$ (to be chosen later), 
and let $I_j = [j T_1, (j+1) T_1] \cap [0, T]$, $j \in \Z_{\ge 0}$.
Recall from 
Proposition \ref{PROP:finite} that 
 the solution $u^N(jT_1)$ at time $t = j T_1$
 is distributed by 
 the white noise measure $\mu_{1 + jT_1}$
 with variance $1 + j T_1$, 
 where $\mu_{1 + jT_1}$ is as in \eqref{mu7}.
 Then, given $K_1 \gg 1$, set $\O_1 = \O_1(T, \eps, N) \subset \O$ by 
 \begin{align}
 \Omega_1= \bigcap_{j=0}^{[T/T_1]}
 \Big\{\|u^N(j T_1 )\|_{\widehat{b}^{-\alpha}_{p,\infty}}\le K_1\Big\}.
\label{O1}
 \end{align}

\noi
Then,  it follows from Lemma \ref{LEM:tail}
and choosing 
\begin{align}
K_1=r_1\sqrt{T \log \frac T\eps}
\label{O1a}
\end{align}

\noi
for some $r_1 \gg1 $(to be chosen later)
that 
\begin{align}
\begin{split}
\PP(\Omega_1^c)&\les \sum_{j=0}^{[T/T_1]}
e^{-\frac{c}{1+j T_1}K_1^2} 
\sim \frac T{T_1} e^{-\frac{c'}{T}K_1^2} 
= T_1^{-1} T^{1-c'r_1^2}\eps^{c'r_1^2}.
\end{split}
\label{O2}
\end{align}

\noi
Next, define $\O_2 = \O_2(T, \eps) \subset \O$ by 
 \begin{align}
 \Omega_2= \bigcap_{j=0}^{[T/T_1]}
 \Big\{
 \| \Psi\|_{X^{-\alpha,\alpha}_{p,2}([jT_1, jT_1 + 1])}
\le K_1\Big\}, 
\label{O3}
 \end{align}

\noi
where $K_1$ is as in \eqref{O1a}.
Then, by
Lemma \ref{LEM:sto1}\,(i) and Chebyshev's inequality, 
we have 
\begin{align}
\begin{split}
\PP(\Omega_2^c)&\les \sum_{j=0}^{[T/T_1]}
e^{-\frac{c}{1+j T_1}K_1^2} 
\sim 
T_1^{-1} T^{1-c'r_1^2}\eps^{c'r_1^2}
\end{split}
\label{O4}
\end{align}

\noi
just as in \eqref{O2}.
Lastly, define $\O_3 = \O_3(T, \eps) \subset \O$ by 
\begin{align}
\O_3 = \big\{ L_\o(T) \le K_2\big\}, 
\label{O5}
\end{align}

\noi
where $L_\o(T)$ is as in \eqref{F1} and 
\begin{align}
K_2 = r_2 \sqrt {T^3 \log \frac 1\eps} .
\label{O5a}
\end{align}
Then, by choosing $r_2 > 0$ sufficiently large, 
it follows from \eqref{O0} 
and Chebyshev's inequality
that 
\begin{align}
\PP(\O_3^c) \leq C e^{- \frac c {T^3} K_2^2} < \frac\eps 4.
\label{O6}
\end{align}

Let $R_\o$ be as in \eqref{bd1}.
Then, on $\O_1\cap \O_2 \cap \O_3$, we have
\begin{align}
R_\o(jT_1) \leq C_*(2K_1 + 1) \sim K_1
\qquad \text{and}\qquad
L_\o(T) \le K_2
\label{O7}
\end{align}

\noi
for $j = 0, 1, \dots, \big[\frac T {T_1}\big]$.
In view of \eqref{nonlin1a} and \eqref{bd1a} in 
Lemma \ref{LEM:nonlin2} with \eqref{O7}, we now choose $T_1 > 0$ by 
setting
\begin{align}
T_1 
\sim \min \Big\{ K_1^{-\frac 1\ta}, \,(K_1^2 + K_2)^{-\g}\Big\}.
\label{O8}
\end{align}

\noi
Then, by choosing $r_1 > 0$ sufficiently large, 
it follows from \eqref{O2} and \eqref{O4}
with 
\eqref{O1a},  \eqref{O5a}, and \eqref{O8}
that 
\begin{align}
\PP(\Omega_k^c)
< \frac \eps 4
\label{O9}
\end{align}

\noi
for $k = 1,2$.
Furthermore, from Lemma \ref{LEM:nonlin2}
and \eqref{O7} with \eqref{O1a}, 
we obtain
\begin{equation}\label{O9a}
\|u^N\|_{X^{-\alpha,\alpha}_{p,2}(I_j)}
\le C_*(2K_1 + 1)\sim  K_1
\sim \sqrt{T \log \frac T\eps}
\end{equation}

\noi
for $j = 0, 1, \dots, \big[\frac T {T_1}\big]$.

Now, define $\O_4 = \O_4(T, \eps) \subset \O$ by 
\begin{align}
\O_4 = \bigg\{\| \Psi \|_{C([0, T]; \ft b^s_{p, \infty})} \le r_3 
\sqrt {\log \frac 1 \eps}\sqrt{T \log T}\bigg\}.
\label{O10}
\end{align}

\noi
Then, from 
Lemma \ref{LEM:sto1}\,(ii) and Chebyshev's inequality, we have
\begin{align}
\PP(\O_4^c) < \frac \eps 4
\label{O11}
\end{align}

\noi
by choosing $r_3 > 0$ sufficiently large.
In view of \eqref{nonlin4} with \eqref{O7} and \eqref{O9a}, 
we further impose that 
\begin{align}
T_1^\ta\big(C_5  C_*(2K_1 + 1) 
+ C_6 C_*^2(2K_1 + 1)^2 + C_7K_2\big) \leq 1 .
\label{O12}
\end{align}

\noi
Note that \eqref{O12} yields
$T_1 \les (K_1^2 + K_2)^{-\frac 1\ta}$, 
which is essentially implied by \eqref{O8}
(by possibly making $r_1$ larger)
and thus the bound \eqref{O9} still holds.

Finally, set $\Omega_{T,\eps}(N) = \O_1\cap \cdots \cap \O_4$.
Then, from \eqref{O6}, \eqref{O9}, and \eqref{O11}, 
we have
\[ \PP(\Omega_{T,\eps}(N)^c) < \eps.\]

\noi
Furthermore, 
on $\Omega_{T,\eps}(N)$, 
we conclude from 
\eqref{nonlin4} with 
\eqref{O1}, \eqref{O1a},  \eqref{O9a},
\eqref{O10}, and  \eqref{O12} that 
\begin{align*}
\|u^N\|_{C(I_j ; \ft{b}^{-\al}_{p,\infty})}
\les \sqrt {\log \frac 1 \eps}\sqrt{T \log T}, 
\end{align*}

\noi
uniformly in $j = 0, 1, \dots, \big[\frac T {T_1}\big]$,
which implies \eqref{pro1}.
\end{proof}

\section{Approximation argument}
\label{SEC:5}

In this section, we present the proof of 
Theorem \ref{THM:1}.
We first establish the following 
`almost' almost sure global well-posedness
of SKdV \eqref{KdV1}
via an approximation argument.

Given $N \in \NB$, let $u^N$ be the global solution to the truncated SKdV \eqref{KdV4}
with the mean-zero white noise initial data $u_0^\o$ in \eqref{series4},
and 
let $u$ be the  solution to  SKdV \eqref{KdV1}
with the mean-zero white noise initial data $u_0^\o$ in \eqref{series4},
whose local existence is guaranteed by Theorem \ref{THM:hiro}.

\begin{proposition}\label{PROP:GWP1}
  Let $\al =\frac{1}{2}- \delta$ 
and   $p = 2 + \dl_0$ 
for some small $\dl, \dl_0 > 0$ such that 
 $\frac{p-2}{3p}<\delta<\frac{p-2}{2p}$. 
Given any  $T\gg 1$ and $0 < \eps \ll 1$, 
there exist a set  $\Omega_{T,\eps}$
and $N_* = N_*(T, \eps) \in \NB$ such that 
$\mathbb{P}(\Omega_{T,\eps}^c)<\eps$ and, 
on $\O_{T, \eps}$, we have 
  \begin{align}
\sup_{t\in [0,T]}\| u(t) - u^{N_*}(t)\|_{\widehat{b}_{p,\infty}^{-\alpha}}
\le C(T, \eps) N_*^{-\frac{\dl}{2}}.
\label{XX2}
\end{align}

\noi
In particular, 
on $\O_{T, \eps}$, 
the solution $u$ to SKdV \eqref{KdV1}
with the mean-zero white noise initial data $u_0^\o$ in \eqref{series4}
exists on the  time interval $[0, T]$.

\end{proposition}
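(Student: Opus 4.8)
The plan is to run a variant of Bourgain's approximation argument, comparing the full solution $u$ with a \emph{single} truncated solution $u^{N_*}$ for $N_*$ large and propagating their closeness across the short subintervals $I_j = [jT_1, (j+1)T_1]$ used in the proof of Proposition \ref{PROP:main}. The key structural observation is that $u$ and $u^{N_*}$ are driven by the same mean-zero white noise initial data $u_0^\o$ and the same noise (hence the same stochastic convolution $\Psi$), so the difference $w := u - u^{N_*}$ vanishes at $t = 0$ and is forced solely by the frequency truncation in the nonlinearity. Since all the nonlinear estimates of Section \ref{SEC:LWP} hold uniformly in $N \in \Z_{\ge 0}$ (with $N = \infty$ corresponding to $u$), the a priori bound of Lemma \ref{LEM:nonlin2} applies equally to $u$ and to $u^{N_*}$.

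First I would fix the good set $\Omega_{T,\eps} := \Omega_{T,\eps}(N_*)$ from Proposition \ref{PROP:main}, so that $\PP(\Omega_{T,\eps}^c) < \eps$ and, on $\Omega_{T,\eps}$, the truncated solution obeys $\sup_{t \in [0,T]}\|u^{N_*}(t)\|_{\widehat{b}^{-\alpha}_{p,\infty}} \le K \sim \sqrt{\log\frac1\eps}\sqrt{T\log T}$, the subinterval bounds $\|u^{N_*}\|_{X^{-\alpha,\alpha}_{p,2}(I_j)} \lesssim K_1$ from \eqref{O9a}, the stochastic-convolution bounds contained in $\Omega_2 \cap \Omega_4$, and $L_\o(T) \le K_2$ from $\Omega_3$, with $T_1$ as in \eqref{O8}. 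These were all arranged in the proof of Proposition \ref{PROP:main}.

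The heart of the argument is a difference estimate with a polynomial gain in $N_*$. Subtracting the iterated Duhamel formulations \eqref{DD7} and \eqref{DD9} and using bilinearity, I would split
\begin{align*}
\N(u,u) - \N^{N_*}(u^{N_*},u^{N_*})
&= \N(w,u) + \N(u^{N_*},w) \\
&\quad + \big[\N(u^{N_*},u^{N_*}) - \P_{N_*}\N(\P_{N_*}u^{N_*},\P_{N_*}u^{N_*})\big].
\end{align*}
The first two terms are bilinear in $w$ and are controlled by the difference estimate \cite[eq.\,(74)]{OH4}, producing a factor $\|w\|_{X^{-\alpha,\alpha}_{p,2}(I_j)}$ that is absorbed after choosing $T_1$ small as in Lemma \ref{LEM:nonlin2}. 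The bracketed term is supported on frequency interactions in which at least one of $n, n_1, n_2$ exceeds $N_*$; here I would revisit the nonlinear estimates of \cite{OH4} and trade the available regularity slack — precisely the point of the strengthened hypothesis $\delta > \frac{p-2}{3p}$ (in place of $\frac{p-2}{4p}$ in Theorem \ref{THM:hiro}) — for a gain $N_*^{-\frac{\dl}{2}}$. Combining this with the auxiliary bounds \eqref{nonlin2}--\eqref{nonlin4} (needed since the embedding \eqref{embed1} fails at the endpoint $b = \alpha < \frac12$), I expect, on every $I_j$ on which both $u$ and $u^{N_*}$ are bounded,
\begin{equation*}
\|w\|_{C(I_j;\widehat{b}^{-\alpha}_{p,\infty})} \le \Lambda\,\|w(jT_1)\|_{\widehat{b}^{-\alpha}_{p,\infty}} + C\,N_*^{-\frac{\dl}{2}}\,P(K_1,K_2)
\end{equation*}
for some $\Lambda > 1$ and a polynomial $P$.

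Finally I would iterate and bootstrap. Starting from $w(0) = 0$, the recursion gives $\|w(jT_1)\|_{\widehat{b}^{-\alpha}_{p,\infty}} \lesssim \Lambda^{j} N_*^{-\frac{\dl}{2}}P(K_1,K_2)$, so after the $M \sim T/T_1$ steps one obtains $\sup_{t\in[0,T]}\|w(t)\|_{\widehat{b}^{-\alpha}_{p,\infty}} \le C(T,\eps)\,N_*^{-\frac{\dl}{2}}$ with $C(T,\eps) \sim \Lambda^{T/T_1}P(K_1,K_2)$, which is finite (though exponentially large in the polynomially many steps). Running this inside a continuity argument along the maximal interval of existence of $u$ (Theorem \ref{THM:hiro}): as long as $w$ stays $O(N_*^{-\frac{\dl}2})$-small, $u$ remains comparable to the bounded $u^{N_*}$, so the a priori bound of Lemma \ref{LEM:nonlin2} keeps $u$ bounded on each $I_j$ and the blowup alternative prevents $u$ from ceasing to exist before time $T$. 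Choosing $N_* = N_*(T,\eps)$ large enough that $C(T,\eps)N_*^{-\frac{\dl}2} \ll 1$ closes the bootstrap and yields \eqref{XX2}. The main obstacle is the bracketed term in the difference estimate: extracting the gain $N_*^{-\frac{\dl}2}$ from the frequency mismatch while retaining the low-temporal-regularity ($b < \frac12$) structure of the iterated Duhamel formulation, which is exactly where the extra room in $\delta$ must be spent.
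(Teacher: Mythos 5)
Your proposal follows essentially the same route as the paper's proof: the same good set $\O_{T,\eps}(N_*)$ from Proposition \ref{PROP:main}, local-in-time difference estimates on the iterated Duhamel formulations in which the truncation-error terms gain a factor $N_*^{-\frac{\dl}{2}}$ via the embedding made available by the strengthened hypothesis $\dl > \frac{p-2}{3p}$, and an iteration over the $\sim T/T_1$ subintervals with exponentially accumulating (but $N_*$-independent) constants, closed by a continuity/bootstrap argument for the untruncated solution and a final large choice of $N_*$. The only point you leave implicit---the cross terms $\N_1(\Psi,u) - \N_1^{N_*}(\Psi,u^{N_*})$, which the paper controls through the pathwise bound $\wt L^\perp_{\o,N_*}(T) \les N_*^{-\frac{\dl}{2}} L_\o(T)$ in \eqref{F4}---is handled by the same mechanism and does not alter the structure of the argument.
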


As compared to Theorem \ref{THM:hiro}, 
we need an extra restriction $ \dl > \frac{p-2}{3p}$  in order to obtain a decay in $N$.
See \eqref{embed3}
and \eqref{X5a}.

\begin{proof}

We first record the following embedding, which requires
the additional condition
$\dl > \frac {p-2}{3p}$.
Let $p > 2$.
By H\"older's inequality, we have 
\begin{align*}
\| f\|_{H^{-\frac 12 - \frac 12\dl}} 
& \le 
\bigg(\sum_{j = 0}^\infty 2^{-2\eps j}
 \| \jb{n}^{-\frac{1}{2}- \frac 12 \dl + \eps} \ft f(n)\|^2_{\l^2_{|n|\sim 2^j}}\bigg)^\frac 12 
\\
& \le 
\| \jb{n}^{-\frac 32 \dl + \eps} \|_{\l^{\frac {2p}{p - 2}}_n}
\sup_{j \in \Z_{\ge 0}} \| \jb{n}^{-\frac 12 + \dl} \ft f(n) \|_{\l^p_{|n|\sim 2^j}}
 \les \| f\|_{\ft b^{-\frac 12 + \dl}_{p, \infty}}, 
\end{align*}

\noi
provided that $\dl > \frac {p-2}{3p}$ (by taking $\eps > 0$ sufficiently small).
Hence, we have 
\begin{align}
\| u \|_{X^{ -\frac 12 - \frac 12\dl, b}}
\les \| u \|_{X^{ - \frac 12 + \dl, b}_{p, 2}}
\label{embed3}
\end{align}

\noi
for any $s, b \in \R$, 
provided that $\dl > \frac {p-2}{3p}$.
Instead of  \eqref{embed0a}, 
 we use \eqref{embed3} in the following.

\medskip

\noi
$\bullet$ {\bf Step 1:}
In the following, we  first study the difference
of the Duhamel formulations 
\eqref{DD7a}
and \eqref{DD8}
for SKdV \eqref{KdV1} and the truncated SKdV \eqref{KdV4}, 
respectively, 
on {\it short} time intervals.
Our first main goal is to estimate the difference 
\begin{align*}
\| \N(u, u) - \N^N(u^N, u^N)\|_{X_{p, 2}^{-\al, \al, T_1}}
\end{align*}

\noi
for small $T_1 > 0$, 
where $\al = \frac 12 - \dl$ as in 
\eqref{al}.
From the discussion in Subsection \ref{SUBSEC:Duhamel}, 
we have
\begin{align*}
\N(u, u) - \N^N(u^N, u^N)
& = \sum_{j = 0}^2 \Big(\N_j(u, u) - \N^N_j(u^N, u^N)\Big)\\
& = \N_0(u, u) - \N^N_0(u^N, u^N)\\
& \quad 
- \frac 12\Big(
\N_1(\N(u, u), u) - \N_1^N(\N^N(u^N, u^N), u^N)\Big)\\
& \quad + 
 \N_1( \Psi, u) - \N_1^N(\Psi, u^N)\\
& \quad 
- \frac 12\Big(
\N_2(u, \N(u, u)) - \N_2^N(u^N, \N^N(u^N, u^N))\Big)\\
& \quad + 
\N_2(u,  \Psi) - \N_2^N(u^N, \Psi).
\end{align*}

\noi
From the definitions of $\N_1(u, u)$ and $\N_1^N(u^N, u^N)$, we have 
\begin{align}
\begin{split}
& \N_1(\N(u, u), u) - \N_1^N(\N^N(u^N, u^N), u^N)\\
& \quad = \N_1(\N(u, u), u) - \P_N \N_1(\P_N\N(\P_Nu^N, \P_Nu^N), \P_Nu^N)\\
& \quad = \N_1\big(\N(u, u) - \P_N\N(\P_Nu^N, \P_Nu^N), u\big) \\
& \quad \quad  + \N_1(\P_N\N(\P_Nu^N, \P_Nu^N), u - u^N)\\
& \quad \quad  + \N_1(\P_N\N(\P_Nu^N, \P_Nu^N), \P_N^\perp u^N)\\
& \quad \quad  +\P_N^\perp \N_1(\P_N\N(\P_Nu^N, \P_Nu^N), \P_Nu^N)\\
& \quad =: A_1 + A_2 + A_3 + A_4
\end{split}
\label{X1}
\end{align}

\noi
and
\begin{align}
\begin{split}
 \N_1(& \Psi, u) - \N_1^N(\Psi, u^N)
  = \N_1(\Psi, u) - \P_N \N_1(\P_N\Psi, \P_Nu^N)\\
& \quad = \N_1(\P_N^\perp \Psi, u\big)   + \N_1(\P_N\Psi, u - u^N)\\
& \quad \quad  + \N_1(\P_N\Psi , \P_N^\perp u^N)
  +\P_N^\perp \N_1(\P_N\Psi, \P_Nu^N)\\
& \quad =: B_1 + B_2 + B_3 + B_4.
\end{split}
\label{X2}
\end{align}

\noi
Similar expressions hold
for the differences
\begin{align}
\N_2(u, \N(u, u)) - \N_2^N(u^N, \N^N(u^N, u^N))
\label{X1a}
\end{align}
and 
\begin{align}
 \N_2(u,  \Psi) - \N_2^N(u^N, \Psi).
\label{X2a} 
\end{align}

Let us first estimate \eqref{X2}.
Given $N \in \NB$, define $\wt L_{\o, N}^\perp(T)$ by 
\begin{align}
\wt   L_{\omega, N}^\perp(T)&= 
  \|\ind_{[0, T]}\P_N^\perp \Psi\|_{X^{-\frac 12 -  \dl, \frac 12 -\dl}}
+ 
\|\ind_{[0, T]}\P_N^\perp \Psi\|_{Y^{-\frac 12 -  \dl, \frac {11}{16} +\dl}_{2, 4}}.
\label{F3}
\end{align}

\noi
See \eqref{BX4} below.
Then, from \eqref{F1} and \eqref{F3}, we have
\begin{align}
\wt   L_{\omega, N}^\perp(T) \les N^{-\frac \dl 2}
  L_{\omega}(T).
  \label{F4}
\end{align}

From   the estimates in 
 Appendix~\ref{SEC:B}, 
\eqref{F4}, and \eqref{embed3} (see also \eqref{X5a} below),  
we have 
\begin{align}
\begin{split}
& \| B_1 + B_2 + B_3\|_{X^{-\al, 1- \al, T_1}}\\
& \quad 
\les T_1^\ta \wt L_{\o, N}^\perp (T) \| u\|_{X^{- (1- \al), \al, T_1}}\\
& \quad \quad + T_1^\ta L_{\o} (T) \Big(\| u - u^N \|_{X^{- (1- \al), \al, T_1}}
+   \| \P_N^\perp u^N \|_{X^{- (1- \al), \al, T_1}}\Big)\\
& \quad \les T_1^\ta L_{\o} (T) \| u - u^N \|_{X^{- \al, \al, T_1}_{p, 2}}
+ N^{-\frac\dl 2}
T_1^\ta
L_{\o} (T)
\Big(  \| u \|_{X^{-  \al, \al, T_1}_{p, 2}}
+   \|  u^N \|_{X^{-  \al, \al, T_1}_{p, 2}}\Big)
\end{split}
\label{X3}
\end{align}

\noi
and
\begin{align}
\begin{split}
\| B_4\|_{X^{-\al, 1- \al, T_1}}
& \les N^{-\frac {\dl}{2}}
\| B_4\|_{X^{-\al + \frac \dl 2, 1- \al, T_1}}
 \les N^{-\frac {\dl}{2}}T_1^\ta L_{\o} (T) \|  u^N \|_{X^{- \frac 12 - \frac \dl 2, \al, T_1}}\\
& \les N^{-\frac {\dl}{2}} T_1^\ta L_{\o} (T) \|  u^N \|_{X_{p, 2}^{-\al, \al, T_1}}
\end{split}
\label{X4}
\end{align}

\noi
for some small $\ta > 0$.
Therefore, from \eqref{X3},  \eqref{X4}, 
and the symmetry between $\N_1$ and $\N_2$, we have
\begin{align}
\begin{split}
\| \eqref{X2} + \eqref{X2a}\|_{X^{-\al, 1- \al, T_1}}
&  \les T_1^\ta L_{\o} (T) \| u - u^N \|_{X^{- \al, \al, T_1}_{p, 2}}\\
& \quad + N^{-\frac\dl 2}
T_1^\ta
L_{\o} (T)
\Big(  \| u \|_{X^{-  \al, \al, T_1}_{p, 2}}
+   \|  u^N \|_{X^{-  \al, \al, T_1}_{p, 2}}\Big).
\end{split}
\label{X5}
\end{align}

Next, we estimate  the terms in \eqref{X1}.
The main nonlinear analysis comes from 
\cite[(2.27)-(2.59)
pp.\,125-130]{BO3}
and \cite[``Estimate on (i)'' on pp.\,295-296]{OH4}.
Here,  the latter replaces
\cite[Estimation of (2.62) on p.\,131]{BO3}, 
where the a priori assumption \eqref{BOO} was used.
In \cite{BO3}, 
the nonlinear analysis (\cite[(2.27)-(2.59)
pp.\,125-130]{BO3})
was estimated by  
the 
$X^{-(1-\al),\al}$-norm of~$u$. 
In particular, in estimating the terms
with $\P_N^\perp u^N$
 in \eqref{X1} (namely, the first and third terms on the right-hand side of~\eqref{X1}), 
we can apply \eqref{embed3}
to  gain a negative power of $N$ as follows:
\begin{align}
\begin{split}
\| \P_N^\perp u^N \|_{X^{-(1-\al),  \al, T_1}}
& \les 
N^{-\frac \dl 2} \| \P_N^\perp u^N \|_{X^{-\frac 12 -  \frac 12 \dl ,  \al, T_1}}\\
& \les N^{-\frac \dl 2}\|  u^N \|_{X_{p, 2}^{-\al,  \al, T_1}}.
\end{split}
\label{X5a}
\end{align}

\noi
As for and \cite[``Estimate on (i)'' on pp.\,295-296]{OH4}
on $R_\al$ in \cite[(58)]{OH4}, 
we used $\jb{n}^{-1- \al} \le \jb{n}^{-3\al}$.
This  can be replaced by 
$\jb{n}^{-1- \al + \frac \dl 2} \le \jb{n}^{-3\al}$,
which allows us to gain $N^{-\frac\dl2}$
from~$\P_N^\perp$.

From the discussion above, 
a straightforward modification of the estimates in 
\cite[(2.27)-(2.59)
pp.\,125-130]{BO3}
and \cite[``Estimate on (i)'' on pp.\,295-296]{OH4}
yields
\begin{align}
\begin{split}
 \|  A_1 & \|_{X^{-\al, 1- \al, T_1}}\\
& \les T_1^\ta 
\| \N_1(u, u) - \P_N\N_1(\P_Nu^N, \P_Nu^N)\|
_{X^{-\al, 1- \al, T_1}}
\| u \|_{X^{-(1-\al), \al, T_1}}\\
& \quad  + T_1^\ta
\Big( \|  u \|_{X^{- (1- \al), \al, T_1}}^2 +  \|  u^N \|_{X^{-  (1- \al), \al, T_1}}^2\Big)
 \| u -  u^N \|_{X^{-   (1- \al), \al, T_1}}\\
& \quad  + N^{-\frac \dl 2}T_1^\ta
 \|  u^N \|_{X^{-   (1- \al), \al, T_1}}^3\\
& \les T_1^\ta 
\| \N_1(u, u) - \N^N_1(u^N, u^N)\|
_{X^{-\al, 1- \al, T_1}}
 \|  u \|_{X^{-  \al, \al, T_1}_{p, 2}}\\
& \quad  + T_1^\ta
\Big( \|  u \|_{X^{-  \al, \al, T_1}_{p, 2}}^2 +  \|  u^N \|_{X^{-  \al, \al, T_1}_{p, 2}}^2\Big)
 \| u -  u^N \|_{X^{-  \al, \al, T_1}_{p, 2}}\\
& \quad  + N^{-\frac \dl 2}T_1^\ta
 \|  u^N \|_{X^{-  \al, \al, T_1}_{p, 2}}^3.
\end{split}
\label{X6a}
\end{align}

\noi
Here,  the first term on the right-hand side comes from 
\cite[(II.1) on pp.\,126-127]{BO3}, 
while the second and third terms on the right-hand side
come from estimating the other cases
 trilinearly,  using
\begin{align*}
\N(u, u) - \P_N\N(\P_Nu^N, \P_Nu^N) 
& = \N(u, u) -\N(u^N, u^N)\\
& \quad 
+ \N( u^N, \P_N^\perp u^N)
+ \N(\P_N^\perp u^N, \P_Nu^N)\\
& \quad 
+  \P_N^\perp \N(\P_Nu^N, \P_Nu^N).
\end{align*}

\noi
Similarly, we have 
\begin{align}
\begin{split}
 \|  A_2  \|_{X^{-\al, 1- \al, T_1}}
& \les T_1^\ta 
\| \N^N_1(u^N, u^N)\|_{X^{-\al, 1- \al, T_1}}
 \| u -  u^N \|_{X^{-  \al, \al, T_1}_{p, 2}}\\
& \quad  
+ T_1^\ta
 \|  u^N \|_{X^{-  \al, \al, T_1}_{p, 2}}^2
  \| u -  u^N \|_{X^{-  \al, \al, T_1}_{p, 2}}
\end{split}
\label{X6b}
\end{align}

\noi
and 
\begin{align}
\begin{split}
 \|  A_3  \|_{X^{-\al, 1- \al, T_1}}
& \les 
N^{-\frac \dl 2}T_1^\ta
\| \N^N_1(u^N, u^N)\|_{X^{-\al, 1- \al, T_1}}
 \| u^N \|_{X^{-  \al, \al, T_1}_{p, 2}}\\
& \quad  
+ N^{-\frac \dl 2}T_1^\ta
 \|  u^N \|_{X^{-  \al, \al, T_1}_{p, 2}}^3 .
\end{split}
\label{X6c}
\end{align}

\noi
In handling the term $A_4$ in \eqref{X1}
with $\P_N^\perp$ outside the nonlinearity
we simply 
use 
\begin{align*}
\jb{n}^\frac \dl 2 \les \jb{n_1}^\frac \dl 2\jb{n_2}^\frac \dl 2
\qquad \text{and}
\qquad 
\jb{n}^\frac \dl 2 \les \jb{n_2}^\frac \dl 2\jb{n_3}^\frac \dl 2\jb{n_4}^\frac \dl 2, 
\end{align*}

\noi
where
$n_3$ and $n_4$ are the spatial frequencies
of the first and second factors of 
$\P_N\N(\P_Nu^N, \P_Nu^N)$ in $A_4$; see also \eqref{X4} above.
Thus, we have
\begin{align}
\begin{split}
 \|  A_4  \|_{X^{-\al, 1- \al, T_1}}
& \les 
 N^{-\frac \dl 2} \|  A_4  \|_{X^{-\al+ \frac \dl 2, 1- \al, T_1}}\\
& \les 
N^{-\frac \dl 2}T_1^\ta
\| \N^N_1(u^N, u^N)\|_{X^{-\al, 1- \al, T_1}}
 \| u^N \|_{X^{-  \al, \al, T_1}_{p, 2}}\\
& \quad  
+ N^{-\frac \dl 2}T_1^\ta
 \|  u^N \|_{X^{-  \al, \al, T_1}_{p, 2}}^3.
\end{split}
\label{X6d}
\end{align}

\noi
Here,  the first term on the right-hand side of \eqref{X6d}
comes from \cite[(II.1) on pp.\,126-127]{BO3}, 
where we used the fact that 
$\jb{n_1}^{2\al - 1} = \jb{n_1}^{-2\dl}$.
(In \cite{BO3}, in view of $2\al - 1< 0$, 
this factor $\jb{n_1}^{2\al - 1}$ was simply thrown away; see \cite[(2.37)]{BO3}.)
Hence, from 
\eqref{X6a}, \eqref{X6b},  \eqref{X6c}, \eqref{X6d}, and 
the symmetry between $\N_1$ and $\N_2$, we obtain
\begin{align}
\begin{split}
\| \eqref{X1} + \eqref{X1a} \|_{X^{-\al, 1- \al, T_1}}
& \les T_1^\ta 
\| \N_1(u, u) - \N_1^N(u^N, u^N)\|
_{X^{-\al, 1- \al, T_1}}
 \|  u\|_{X^{-  \al, \al, T_1}_{p, 2}}\\
& \quad  + T_1^\ta
\Big( \|  u \|_{X^{-  \al, \al, T_1}_{p, 2}}^2 +  \|  u^N \|_{X^{-  \al, \al, T_1}_{p, 2}}^2\Big)
 \| u -  u^N \|_{X^{-  \al, \al, T_1}_{p, 2}}\\
& \quad  +
 T_1^\ta 
\| \N_1^N(u^N, u^N)\|_{X^{-\al, 1- \al, T_1}}
 \| u -  u^N \|_{X^{-  \al, \al, T_1}_{p, 2}}\\
& \quad  +
N^{-\frac \dl 2}T_1^\ta
\| \N_1^N(u^N, u^N)\|_{X^{-\al, 1- \al, T_1}}
 \| u^N \|_{X^{-  \al, \al, T_1}_{p, 2}}\\
& \quad  + N^{-\frac \dl 2}T_1^\ta
 \|  u^N \|_{X^{-  \al, \al, T_1}_{p, 2}}^3.
\end{split}
\label{X7}
\end{align}

Given $R \geq 1$, 
by choosing $T_1 = T_1(R)> 0$ sufficiently small 
such that  the condition~\eqref{nonlin1a} is satisfied.
Then, by possibly making $T_1 = T_1(R)> 0$ 
small, it follows from~\eqref{X5}
and~\eqref{X7} with \eqref{nonlin2}
that 
\begin{align}
\begin{split}
& \sum_{j = 1}^2 \| \N_j(u, u) - \N^N_j(u^N, u^N)\|_{X^{-\al, 1- \al, T_1}}\\
& \quad \les 
 T_1^\ta
\Big( \|  u \|_{X^{-  \al, \al, T_1}_{p, 2}}^2 +  R^3 + L_\o(T) R \Big)
 \| u -  u^N \|_{X^{-  \al, \al, T_1}_{p, 2}}\\
& \quad \quad  +
N^{-\frac \dl 2}T_1^\ta
\Big( R^4   +  
L_\o(T) R^2 \Big)
\end{split}
\label{X8}
\end{align}

\noi
under an extra assumption $u$:
\begin{align}
 \|u\|_{X^{-\alpha,\alpha,T_1}_{p,2}} \le 2R.
\label{XX1}
\end{align}


\noi
As mentioned in Section \ref{SEC:LWP}, 
the temporal regularity on the left-hand side
of \eqref{X8}
is $b = 1- \al = \frac 12 + \dl > \frac 12$,
which is used in \eqref{X12} below.

The following estimate follows from a slight modification 
of the bilinear estimate \eqref{bilin1}
(see
\cite[(I.1) and (I.2) on pp.\,122-125]{BO3}
and \eqref{embed0a}):
\begin{align}
\begin{split}
\| \N_0 & (u, u)-\N_0^N(u^N, u^N)\|_{X^{-\al, \al, T_1}}\\
& \les T_1^\ta
\Big(  \| u  \|_{X^{-  (1-\al), \al, T_1}}
+  \|   u^N \|_{X^{-  (1-\al), \al, T_1}}\Big)
 \| u -  u^N \|_{X^{-  (1-\al), \al, T_1}}\\
& \les T_1^\ta
\Big(  \| u  \|_{X^{-  \al, \al, T_1}_{p, 2}}
+  \|   u^N \|_{X^{-  \al, \al, T_1}_{p, 2}}\Big)
 \| u -  u^N \|_{X^{-  \al, \al, T_1}_{p, 2}}.
\end{split}
\label{X9}
\end{align}

\noi
As for the difference of the linear solutions, 
it follows from 
\eqref{time1} (with $T_1 \le 1$) and 
\eqref{lin1} that 
\begin{align*}
\| S(t) u(0) - S(t) u^N(0) \|_{X_{p, 2}^{-\al, \al, T_1}}
& \le 
\| S(t) u(0) - S(t) u^N(0) \|_{X_{p, 2}^{-\al, \al, 1}}\\
&  \les \| u(0) - u^N(0) \|_{\ft b^{-\al}_{p, \infty}}.
\end{align*}

\noi
Therefore, putting 
\eqref{DD7a}, \eqref{NN2}, 
\eqref{DD8}, \eqref{X8}, and \eqref{X9}
together 
 we obtain
\begin{align}
\begin{split}
\| u - u^N \|_{X_{p, 2}^{-\al, \al, T_1}}
&  \le  D_0  \| u(0) - u^N(0) \|_{\ft b^{-\al}_{p, \infty}}\\
& \quad 
+  D_1 T_1^\ta
\Big( \|  u \|_{X^{-  \al, \al, T_1}_{p, 2}}^2 +  R^3 + L_\o(T) R \Big)
 \| u -  u^N \|_{X^{-  \al, \al, T_1}_{p, 2}}\\
&  \quad  +
D_2
N^{-\frac \dl 2}T_1^\ta
\Big( R^4   +  
L_\o(T) R^2 \Big)
\end{split}
\label{X10}
\end{align}

\noi
under the assumptions \eqref{nonlin1a} and \eqref{XX1}.
Here, we took general initial data $u(0)$ and $u^N(0)$
so that we can apply the estimate \eqref{X10}
to a general time interval of length $T_1$.

Next, let us bound the difference 
of $u$ and $u^N$ in the $C([0, T_1]; \ft b^{-\al}_{p, \infty}(\T))$-norm.
A bilinear version of \eqref{nonlin3} yields
\begin{align}
\begin{split}
\| \N_0 &  (u, u)-\N_0^N(u^N, u^N) \|_{C([0, T_1]; \ft b^{-\al}_{p, \infty})}\\
& \les T_1^\ta
\Big(  \| u  \|_{X^{-  \al, \al, T_1}_{p, 2}}
+  \|   u^N \|_{X^{-  \al, \al, T_1}_{p, 2}}\Big)
 \| u -  u^N \|_{X^{-  \al, \al, T_1}_{p, 2}}.
\end{split}
\label{X11}
\end{align}	

\noi
Hence, from \eqref{X8} and \eqref{X11}, we have\footnote{In general, 
the constants $D_1$ and $D_2$ in \eqref{X10} and \eqref{X12}
are different, but we simply take the worse ones.}
\begin{align}
\begin{split}
\| u - u^N \|_{C([0, T_1]; \ft b^{-\al}_{p, \infty})}
&  \le \| u(0) - u^N(0) \|_{\ft b^{-\al}_{p, \infty}}\\
& \quad 
+  D_1 T_1^\ta
\Big( \|  u \|_{X^{-  \al, \al, T_1}_{p, 2}}^2 +  R^3 + L_\o(T) R \Big)
 \| u -  u^N \|_{X^{-  \al, \al, T_1}_{p, 2}}\\
&  \quad  +
D_2
N^{-\frac \dl 2}T_1^\ta
\Big( R^4   +  
L_\o(T) R^2 \Big)
\end{split}
\label{X12}
\end{align}

\noi
under the assumptions \eqref{nonlin1a} and \eqref{XX1}.
We point out that the estimates \eqref{X10} and \eqref{X12}
hold true on a general time interval of length $T_1$.

\medskip

\noi
$\bullet$ {\bf Step 2:}
Fix $T \gg 1$ and $0 < \eps \ll 1$.
We now establish the difference estimate \eqref{XX2} on the time interval $[0, T]$
by iterating the local-in-time estimates \eqref{X10}
and \eqref{X12}
with the probabilistic input from Proposition~\ref{PROP:main}.

Given $N \in \NB$, let $\O_{T,  \eps}(N)
= \O_1 \cap \cdots \cap \O_4$ be as in 
Proposition~\ref{PROP:main}, 
where $\O_k$, $k = 1, \dots, 4$, 
are as in \eqref{O1}, \eqref{O3}, \eqref{O5}, 
and \eqref{O10}, respectively.
In particular, if necessary, we have made
$T_1$ smaller such that \eqref{O8} is satisfied.
In the following, it is understood that we work on 
$\O_{T,  \eps}(N)$
and that all the estimates
are restricted to $\O_{T,  \eps}(N)$, where the value of $N$ may increase in each step.

For now, 
assume that 
\begin{align}
\|  u \|_{X^{-  \al, \al}_{p, 2}(I_j)}
\le \|  u^N \|_{X^{-  \al, \al}_{p, 2}(I_j)} + 1
\les K_1
\label{Y1}
\end{align}

\noi
for $I_j = [j T_1, (j+1) T_1] \cap [0, T]$, $j = 0, 1, \dots, \big[\frac T {T_1}\big]$,
where the second inequality follows from~\eqref{O9a}.
Note that, with $R = C_*(2K_1 + 1)$, 
 \eqref{nonlin1a} (on the interval $I_j$) and \eqref{Y1} 
 (see also \eqref{O7} and \eqref{O9a}) implies
\eqref{XX1} (on the interval~$I_j$).
Then, 
in view of \eqref{X10} and \eqref{X12} with \eqref{O7}
(see also~\eqref{bd1} in Lemma \ref{LEM:nonlin2})
we further impose that $T_1 > 0$ be sufficiently small such that 
\begin{align}
\begin{split}
  T_1^\ta
\Big(  K_1^3 + K_1 K_2\Big) \ll 1, \\
T_1^\ta
\Big( K_1^4   +  
K_1^2 K_2 \Big) \ll 1.
\end{split}
\label{Y2}
\end{align}

\noi
In the following, we work iteratively on each interval $I_j$
and verify \eqref{Y1}.

Let us now consider the first time interval $I_0 = [0, T_1]$.
By the local well-posedness theory (see \eqref{nonlin1}), 
there exists small $T_0 > 0$ such that 
\begin{align}
\|  u \|_{X^{-  \al, \al, T_0}_{p, 2}}
\les K_1.
\label{Y3}
\end{align}

\noi
Then, from 
\eqref{X10} (but with $T_0$ replacing $T_1$
and with $u(0) = u^N(0)$) with  \eqref{Y2} and \eqref{Y3}, we have 
\begin{align*}
\| u - u^N \|_{X_{p, 2}^{-\al, \al, T_0}}
&  \le
\frac 12   \| u -  u^N \|_{X^{-  \al, \al, T_0}_{p, 2}}
    +
N^{-\frac \dl 2}
\end{align*}

\noi
Hence, we have 
\begin{align*}
\| u - u^N \|_{X_{p, 2}^{-\al, \al, T_0}}
&  \le
2N^{-\frac \dl 2}.
\end{align*}

\noi
Therefore, 
by a standard continuity argument (see also Remark \ref{REM:cont}), 
we  conclude that there exists $N_0 \in \NB$
such that \eqref{Y1}  holds
on the entire time interval $I_0 = [0, T_1]$
for  any  $N  \ge N_0$.
As a result, we obtain 
\begin{align}
\| u - u^N \|_{X_{p, 2}^{-\al, \al}(I_0)}
&  \le
2N^{-\frac \dl 2}
\label{Y6}
\end{align}

\noi
for any $N \ge N_0$.
By applying \eqref{Y2} and \eqref{Y6} (with \eqref{O7})
to  \eqref{X12}, we then obtain 
\begin{align}
\| u - u^N \|_{C(I_0; \ft b^{-\al}_{p, \infty})}
&  \le 2 N^{-\frac \dl 2}
\label{Y7}
\end{align}

\noi
for any $N \ge N_0$.

On the second interval $I_1 = [T_1, 2T_1]\cap [0, T]$, we repeat an analogous analysis.
From~\eqref{Y7}, we have 
\begin{align*}
\| u(T_1) - u^N(T_1)  \|_{\ft b^{-\al}_{p, \infty}}
&  \le 2 N^{-\frac \dl 2}
\end{align*}

\noi
for any $N \ge N_0$.
By the local theory, there exists small $T_0 > 0$ such that 
\begin{align*}
\|  u \|_{X^{-  \al, \al}_{p, 2}([T_1, T_1 + T_0])}
\les K_1.
\end{align*}

\noi
Then, from 
\eqref{X10} (but on $[T_1, T_1+ T_0]$) with  \eqref{Y2} and \eqref{Y3}, we have 
\begin{align*}
\| u - u^N \|_{X_{p, 2}^{-\al, \al}([T_1, T_1 + T_0])}
&  \le 2 N^{-\frac \dl 2}+ 
\frac 12   \| u -  u^N \|_{X^{-  \al, \al}_{p, 2}([T_1, T_1 + T_0])}
    +
N^{-\frac \dl 2}, 
\end{align*}

\noi
which yields
\begin{align*}
\| u - u^N \|_{X_{p, 2}^{-\al, \al}([T_1, T_1 + T_0])}
&  \le
6N^{-\frac \dl 2}.
\end{align*}

\noi
Therefore, 
it follows from  a standard continuity argument that 
 there exists $N_1 \in \NB$
such that \eqref{Y1}  holds
on the entire time interval $I_1$
for any $N \ge N_1$.
As a result, we obtain 
\begin{align}
\| u - u^N \|_{X_{p, 2}^{-\al, \al}(I_1)}
&  \le
6N^{-\frac \dl 2}
\label{Y9a}
\end{align}

\noi
for any $N \ge N_1$.
Hence, from \eqref{X12}, \eqref{Y1}  \eqref{Y2}, and \eqref{Y9a}, we obtain
\begin{align*}
\| u - u^N \|_{C(I_1; \ft b^{-\al}_{p, \infty})}
&  \le 2 N^{-\frac \dl 2} + \frac 12 \cdot 6 N^{-\frac \dl 2} +  N^{-\frac \dl 2}
= 6 N^{-\frac \dl 2}.
\end{align*}

\noi
for any $N \ge N_1$.

Proceeding iteratively, 
we conclude that, on the $j$th interval $I_j = [jT_1, (j+1) T_1]\cap [0, T]$, 
$j = 0, 1, \dots, \big[\frac T {T_1}\big]$,
there exists $N_j \in \NB$ such that 
\begin{align}
\begin{split}
\| u - u^N \|_{X_{p, 2}^{-\al, \al}(I_j)}
&  \le
\bigg( \sum_{k = 0}^j 2^{k+1}\bigg) N^{-\frac \dl 2}, \\
\| u - u^N \|_{C(I_j; \ft b^{-\al}_{p, \infty})}
&  \le\bigg( \sum_{k = 0}^j 2^{k+1}\bigg)  N^{-\frac \dl 2}
\end{split}
\label{Y10}
\end{align}

\noi
for any $N \ge N_j$.
Note that $T_1$ depends only on $T$ and $\eps$;
see 
\eqref{O8} and \eqref{Y2} with 
\eqref{O1a} and \eqref{O5a}.
See also \eqref{nonlin1}
with $R \les K_1$ as in \eqref{O7}.
Therefore, by setting 
\[N_* = N_*(T, \eps)  = N_{[T/T_1]}
\qquad \text{and}\qquad
\O_{T, \eps} = 
\O_{T, \eps}(N_*(T, \eps)), \]

\noi
where the latter is as in Proposition \ref{PROP:main},
we conclude from \eqref{Y10}
that, on $\O_{T, \eps}$, we have 
\begin{align*}
\| u - u^{N_*} \|_{C([0, T]; \ft b^{-\al}_{p, \infty})}
&  \le C(T, \eps)  N_*^{-\frac \dl 2}.
\end{align*}

\noi
This concludes the proof of Proposition \ref{PROP:GWP1}.
\end{proof}

We now present the proof of Theorem \ref{THM:1}.
We first note that the claimed almost sure global well-posedness
of SKdV \eqref{KdV1} with the white noise initial data immediately follows from  the 
`almost' almost sure global well-posedness
result established in Proposition \ref{PROP:GWP1}; 
see \cite{CO, BOP2}.
Indeed, define $\Si \subset \O$ by 
\begin{align}
\Si = \bigcup_{k = 1}^\infty
\bigcap_{j=1}^\infty \Omega_{2^j, \frac{1}{k2^j} }, 
\label{Z1}
\end{align}

\noi
where $\O_{T, \eps}$ is as in 
Proposition \ref{PROP:GWP1}.
Then, we have 
\begin{align*}
\PP(\Si^c) \le \inf_{k \in \NB}
\sum_{j = 1}^\infty \PP(\Omega_{2^j, \frac{1}{k2^j} }^c) 
= \inf_{k \in \NB}\frac 1k = 0.
\end{align*}

\noi
Moreover, if $\o \in \Si$, 
then there exists $k \in \NB$ such that $\o \in \Omega_{2^j, \frac{1}{k2^j} }$
for any $j \in \NB$, 
which implies that the corresponding solution $u = u(\o)$
to SKdV \eqref{KdV1} exists globally in time.

It remains to prove \eqref{th1}.
It follows from the proof of Proposition \ref{PROP:GWP1} that, 
on $\O_{T, \eps}= \O_{T, \eps}(N_*(T, \eps))$, we have 
  \begin{align}
\sup_{t\in [0,T]}\| u^N(t) - u^{N_*}(t)\|_{\widehat{b}_{p,\infty}^{-\alpha}}
\le C(T, \eps) N_*^{-\frac{\dl}{2}}.
\label{Z2}
\end{align}

\noi
for any $N \ge N_*$.
Define $\wt \O_1(N)= \wt \O_1(T, \eps, N) \subset \O$ by 
 \begin{align}
\wt  \Omega_1(N)= \bigcap_{j=0}^{[T/T_1]}
 \Big\{\|u^N(j T_1 )\|_{\widehat{b}^{-\alpha}_{p,\infty}}\le 2K_1\Big\}.
\label{Z3}
 \end{align}
 
\noi
Namely, we replaced $K_1$ in 
\eqref{O1} by $2K_1$.
By taking $N_*$ sufficiently large, 
it follows from~\eqref{Z2} that 
$\O_{T, \eps} \subset \wt \O_1(N)$
for any $N \ge N_*$.
Hence, by setting
$\wt \Omega_{T,\eps}(N) = \wt \O_1\cap \O_2 \cap \O_3 \cap \O_4$, 
where 
$\O_2$, $\O_3$, and $\O_4$
are 
\eqref{O3}, \eqref{O5}, and \eqref{O10}, respectively, 
we have 
\begin{align}
\O_{T, \eps} \subset \wt \Omega_{T,\eps}(N)
\label{Z3a}
\end{align}

\noi
for any $N \ge N_*$.
Now, by repeating Step 2 in the proof of Proposition \ref{PROP:GWP1},\footnote{In \eqref{Z3}, 
we replaced $K_1$ by $2K_1$, which worsens  constants in the argument.
We can, however, implement the proof of Proposition \ref{PROP:GWP1}
to incorporate these worse constants from the beginning.} 
we conclude that, there exists $N_{**} = N_{**}(T, \eps) \in \NB$
such that, 
on $\O_{T, \eps}$, we have 
  \begin{align}
\sup_{t\in [0,T]}\| u(t) - u^{N}(t)\|_{\widehat{b}_{p,\infty}^{-\alpha}}
\le C(T, \eps) N^{-\frac{\dl}{2}}.
\label{Z3b}
\end{align}

\noi
for any $N \ge N_{**}$. 
This in particular implies that, 
for each $\o \in \O_{T, \eps}$, 
the solution $u = u(\o)$ to SKdV \eqref{KdV1} is the limit of $u^N = u^N(\o)$
in $C([0, T]; \widehat{b}_{p,\infty}^{-\alpha}(\T))$.
Hence, given $t \in \R_+$, 
it follows from the discussion above 
that, for each $\o \in \Si$, 
\begin{align*}
\|u^N(t; \o)-u(t;\o)\|_{\widehat{b}^{-\alpha}_{p,\infty}}\too 0
\end{align*}

\noi
as $N \to \infty$.
This in particular implies convergence in law of $u^N(t)$ to $u(t)$.
Recalling that $\Law (u^N(t)) = \mu_{1+t}$ for any $N \in \NB$, 
we then conclude that 
\[\Law (u(t)) = \mu_{1+t}.\]

\noi
This concludes the proof of Theorem \ref{THM:1}.

\begin{remark}\label{REM:bound}\rm
Let $\o \in \O_{T, \eps}$.
Then, from 
\eqref{Z3a}, \eqref{Z3b}, and 
Proposition \ref{PROP:main}, 
we have
\begin{align}
\sup_{t\in [0,T]}\|u(t)\|_{\widehat{b}_{p,\infty}^{-\alpha}}&\le  C
\sqrt {\log \frac 1 \eps}\sqrt{T \log T}.
\label{Z5}
\end{align}

\noi
Fix $ k \in \NB$, 
and suppose that 
$\o \in \bigcap_{j=1}^\infty \Omega_{2^j, \frac{1}{k2^j} }$.
Then, from \eqref{Z5}, we obtain
\begin{align*}
\|u(t)\|_{\widehat{b}_{p,\infty}^{-\alpha}}&\le  C
\sqrt {\log k}\sqrt{1+t} \log (1+t)
\end{align*}

\noi
for any $t \in \R_+$.
Namely, we have
\begin{align}
\|u(t)\|_{\widehat{b}_{p,\infty}^{-\alpha}}&\le  C(\o)\sqrt{1+t} \log (1+t)
\label{Z6}
\end{align}

\noi
for any $t \in \R_+$ and $\o \in \Si$.
Note that the growth bound \eqref{Z6} is not optimal, 
and 
we can improve it by modifying the definition \eqref{Z1} of $\Si$.
For example, by redefining $\Si$ by 
\begin{align*}
\Si = \bigcup_{k = 1}^\infty
\bigcap_{j=1}^\infty \Omega_{2^j, \frac{1}{kj^2} }
\end{align*}

\noi
and repeating the argument, 
we obtain the following growth bound:
\begin{align*}
\|u(t)\|_{\widehat{b}_{p,\infty}^{-\alpha}}&\le  C(\o)\sqrt{1+t} \sqrt{\log (1+t)} \sqrt{\log \log (1+t)}.
\end{align*}

\noi
In this way,  we can obtain a growth bound which is only slightly faster
than $\sqrt{t \log t}$, $t \gg 1$
(but the random constant $C(\o)$ gets worse).

\end{remark}

\appendix

\section{Growth bound on the stochastic convolution for large times}
\label{SEC:A}

In this appendix, we present the proof of Lemma \ref{LEM:sto1}.

\begin{proof}[Proof of Lemma \ref{LEM:sto1}]
Fix 
 $s < 0$ and $1 \le p, q < \infty$ such that $s p < -1$, 
 and $(b-1)q < -1$.
 We also fix $1 \le r  < \infty$ and $T\ge 1$.
 Without loss of generality, we assume 
 \begin{align}
 r  \ge \max(p, q).
\label{AP00}
 \end{align}
 
 \noi
Before proceeding further, we first recall the following bound
for a Gaussian random variable~$g$:
\begin{align}
 \| g\|_{L^r (\O)} \les \sqrt r  \|g\|_{L^2(\O)}.
\label{AP0}
\end{align}

\medskip

\noi
(i) Let $I = [t_0, t_1] \subset [0, T]$ 
be an interval of length $|I|\le 1$.
The first inequality in~\eqref{sto1}
follows from \eqref{FL3}, 
and thus we focus on proving the second inequality in \eqref{sto1}.

Recall that
\begin{align}
 \| u \|_{Y^{s, b}_{p, q}}
= \| S(-t) u(t) \|_{\F L^{s, p}_x \F L^{b, q}_t}, 
\label{AP0a}
\end{align}

\noi
where $\F L^{b, q}_t$ and 
$\F L^{s, p}_x$
are  the Fourier-Lebesgue spaces defined in \eqref{FL1}
and \eqref{FL2}, respectively.
Let $\Phi(t) = S(-t) \Psi(t)$ be the interaction representation 
of $\Psi$.
From \eqref{psi1}
with~\eqref{W1}, we have
\[ \ft{\ind_I \Phi}(n, t) = \ind_I(t) \int_0^t e^{-it'n^3} d \be_n(t').\]

\noi
By taking the temporal Fourier transform, we then have
\begin{align}
\begin{split}
\ft{\ind_I \Phi}(n, \tau) 
& = \int_{t_0}^{t_1} e^{-it\tau} \int_0^t e^{-it'n^3} d \be_n(t')dt\\
& = \int_0^{t_1} e^{-it'n^3} \int_{\max(t_0, t')}^{t_1}e^{-it\tau} dt d \be_n(t').
\end{split}
\label{AP1}
\end{align}

\noi
The inner integral can be estimated as
\begin{align}
\bigg|\int_{\max(t_0, t')}^{t_1}e^{-it\tau}dt\bigg|
\les \min \bigg(1, \frac{1}{|\tau|}\bigg) \les \frac 1{\jb\tau}.
\label{AP2}
\end{align}

From \eqref{time1} (for the $Y^{s, b}_{p, q}$-space) and \eqref{AP0a}, we have 
\begin{align}
\begin{split}
\| \Psi \|_{Y^{s, b}_{p, q}(I)}
& \le \| \ind_I \Psi \|_{Y^{s, b}_{p, q}(I)}
= \| \jb{n}^s \jb{\tau}^b \ft{\ind_I \Phi}(n, \tau) \|_{\l^p_n  L^q_\tau}.
\end{split}
\label{AP3}
\end{align}

\noi
Then, by \eqref{AP3},  Minkowski's integral inequality, and  \eqref{AP0}
followed by the Ito isometry with \eqref{AP1},  \eqref{AP2} and $t_1 \le T$, we have 
\begin{align*}
\Big\| \| \Psi \|_{Y^{s, b}_{p, q}(I)}\Big\|_{L^r (\O)} 
& = \Big\|\| \jb{n}^s \jb{\tau}^b \ft{\ind_I \Phi}(n, \tau) \|_{\l^p_n  L^q_\tau}\Big\|_{L^r (\O)}\\
& \le \Big\|\| \jb{n}^s \jb{\tau}^b \ft{\ind_I \Phi}(n, \tau)\|_{L^r (\O)} \Big\|_{\l^p_n  L^q_\tau}\\
& \les \sqrt r  \Big\|\| \jb{n}^s \jb{\tau}^b \ft{\ind_I \Phi}(n, \tau)\|_{L^2(\O)} \Big\|_{\l^p_n  L^q_\tau}\\
& \les \sqrt {r  T} \| \jb{n}^s \jb{\tau}^{b-1} \|_{\l^p_n  L^q_\tau}\\
& \les \sqrt {r  T}, 
\end{align*}

\noi
since $sp < -1$ and $(b-1) q < -1$.
This proves \eqref{sto1}.

\medskip

\noi (ii)
It follows from 
\cite[Proposition 4.5]{OH4}
that the stochastic convolution is continuous in time with values in $\ft b^{s}_{p, \infty}(\T)$
when $sp < -1$, at least locally in time.
In the following, we estimate its growth in a direct manner
by following the argument in 
\cite[Lemma 3.4]{OP}.

Without loss of generality, assume that $T \in 2^\NB$.
For an integer $k \in \Z\cap [-\log_2 T, \infty)$, 
let $\{ t_{\l, k}:\l = 0, 1,\dots, 2^k T\} $ be $2^{k}T+1$ equally spaced points on $[0, T]$, 
i.e.~$t_{0,k}=0$ and $t_{\l, k} - t_{\l-1, k} = 2^{-k}$ for $\l =1, \dots, 2^kT$.
Let $\Phi(t) = S(-t) \Psi(t)$ be the interaction representation 
of $\Psi$.
Then, given $t \in [0, T]$, 
it follows from the continuity (in time) of $\Psi$ 
and $\Psi(0) = 0$ that 
\begin{align}
\Phi(t)
= \sum_{k = -\log_2 T}^\infty \big(\Phi(t_{\l_k, k}) - \Phi(t_{\l_{k-1}, k-1})\big)
\label{BP1}
\end{align}

\noi
for some $\l_k = \l_k(t) \in \{0, \dots, 2^kT\}$.
Then, 
from  \eqref{FL3}, \eqref{BP1}, 
and Minkowski's integral inequality
with \eqref{AP00}, 
we have 
\begin{align}
\begin{split}
\Big\| \| \Psi \|_{C[0, T]; \ft b^s_{p, \infty})}\Big\|_{L^r (\O)}
& \le \Big\|\| \Phi(t) \|_{C([0, T]; \F L^{s, p})}\Big\|_{L^r (\O)}\\
& \le  \sum_{k =-\log_2T}^\infty
\Big\|  \max_{0\leq \l_k \leq 2^kT} 
\| \Phi(t_{\l_k, k}) - \Phi(t_{\l'_{k-1}, k-1})\|_{\F L^{s, p}} \Big\|_{L^r (\O)}, 
\end{split}
\label{BP2}
\end{align}

\noi
where $t_{\l'_{k-1}, k-1}$ is one of the $2^{(k-1)}T$+1 equally spaced points such that 
\begin{align}
|t_{\l_k, k} - t_{\l'_{k-1}, k-1}| \leq 2^{-k}.
\label{BP3}
\end{align}

For $k \in  \Z\cap [-\log_2 T, \infty)$, let 
\begin{align*} 
q_k  = \max (\log 2^kT, p, r)
\sim \log (2^k T)+ r.
\end{align*}

\noi
Then, noting  that $(2^kT+1)^\frac{1}{q_k} \les 1$, 
it follows from 
\eqref{BP2} that 
\begin{align}
\begin{split}
& \Big\| \| \Phi \|_{C[0, T]; \ft b^s_{p, \infty})}\Big\|_{L^r(\O)}\\
& \quad \le  \sum_{k =-\log_2 T}^\infty
\bigg\| \bigg( \sum_{ \l_k = 0}^{2^kT} 
\| \Phi(t_{\l_k, k}) - \Phi(t_{\l'_{k-1}, k-1})\|_{\F L^{s, p}}^{q_k}\bigg)^\frac1{q_k} \bigg\|_{L^{q_k}(\O)} \\
& \quad =   \sum_{k =-\log_2 T}^\infty
 \bigg( \sum_{ \l_k = 0}^{2^kT} 
\Big\| \| \Phi(t_{\l_k, k}) - \Phi(t_{\l'_{k-1}, k-1})\|_{\F L^{s, p}}
\Big\|_{L^{q_k}(\O)}^{q_k} \bigg)^\frac1{q_k} \\
& \quad \les     \sum_{k =-\log_2 T}^\infty
 \max_{0\leq \l_k \leq 2^kT} 
\Big\| \| \Phi(t_{\l_k, k}) - \Phi(t_{\l'_{k-1}, k-1})\|_{\F L^{s, p}}
\Big\|_{L^{q_k}(\O)}.
\end{split}
\label{BP4}
\end{align}

\noi
From \eqref{FL2}, Minkowski's integral inequality,  and  \eqref{AP0}, 
we have 
\begin{align}
\begin{split}
\Big\| \|  & \Phi(t_{\l_k, k}) - \Phi(t_{\l'_{k-1}, k-1})\|_{\F L^{s, p}}
\Big\|_{L^{q_k}(\O)}\\
& = 
\Big\| \big\| \jb{n}^s\big( \ft \Phi(n, t_{\l_k, k}) - \ft \Phi(n, t_{\l'_{k-1}, k-1})\big) \big\|_{\l^p_n}
\Big\|_{L^{q_k}(\O)}\\
& \les \sqrt{q_k} 
\Big\|  \jb{n}^s \| \ft \Phi(n, t_{\l_k, k}) - \ft \Phi(n, t_{\l'_{k-1}, k-1})
\|_{L^{2}(\O)} \Big\|_{\l^p_n}\\
& =  \sqrt{q_k} 
\Bigg\|  \jb{n}^s
\bigg\|\int_{t_{\l'_{k-1}, k-1}}^{t_{\l_k, k}} e^{-it'n^3} d \be_n(t')\bigg\|_{L^2(\O)} 
\Bigg\|_{\l^p_n}\\
& \les \sqrt{\frac{q_k}{2^k}}, 
\end{split}
\label{BP5}
\end{align}

\noi
where the last step follows from \eqref{BP3}
and $sp < -1$.
Hence, from \eqref{BP4} and \eqref{BP5}, we obtain
\eqref{BP2} that 
\begin{align*}
\Big\| \| \Psi \|_{C[0, T]; \ft b^s_{p, \infty})}\Big\|_{L^r(\O)}
& \les \sqrt r   \sum_{k =-\log T}^\infty
\frac {\log 2^k + \log_2 T}{2^{\frac12k}} \\
& \les \sqrt r \sqrt {T\log T}.
\end{align*}

\noi
This proves \eqref{sto2}.
\end{proof}

\begin{remark}\label{REM:psi2}\rm

Let us consider the bound \eqref{sto1} 
when $I = [0, T]$, as discussed in Remark~\ref{REM:psi1}.
In this case, 
\eqref{AP1} becomes
\begin{align*}
\ft{\ind_{[0, T]} \Phi}(n, \tau) 
& = \int_0^{T} e^{-it'n^3} \int_{ t'}^{T}e^{-it\tau} dt d \be_n(t').
\end{align*}

\noi
In particular, the inner integral is estimated as
\begin{align}
\bigg|\int_{t'}^{T}e^{-it\tau}dt\bigg|
 \les \frac T{\jb\tau}.
\label{CP2}
\end{align}

\noi
Then, by repeating the computation above with \eqref{CP2}, 
we obtain \eqref{sto3}.

\end{remark}

\section{Pathwise bound on the iterated term with the stochastic convolution}
\label{SEC:B}

In this appendix, we establish a pathwise bound
on the $X^{-\al, 1- \al, T}$-norm
of $ \N_1(\Psi, u)$ appearing in \eqref{DD7}.
This was essentially carried out in \cite[``Estimate on (ii)'' on pp.\,296-297]{OH4}
but was done with an expectation.
In the following, based on the analysis in \cite{OH4}, 
we instead present straightforward pathwise analysis.
By duality, it suffices to estimate 
\begin{align}
\sum_{\substack{n, n_1 \in \Z\\n = n_1 + n_2}}
\intt_{\tau = \tau_1 + \tau_2} d\tau d\tau_1
\ind_{\s_1 = \MAX}
\frac{\jb{n}^{1-\al}d(n, \tau)}{\s_0^\al}
|\ft{\ind_{[0, T]}\Psi}(n_1, \tau_1)|
\frac{\jb{n_2}^{1-\al} |c(n_2, \tau_2)|}{\s^\al_2}, 
\label{BX1}
\end{align}

\noi
where
$\s_j$, $j = 0, 1, 2$, is as in \eqref{sig1}, 
$d = d(n, \tau)$ with $\|d\|_{\l^2_n L^2_\tau} = 1$, 
and 
$c(n, \tau) = \jb{n}^{-(1-\al)} \jb{\tau - n^3}^\al \ft u(n, \tau)$
such that $\|c\|_{\l^2_n L^2_\tau} = \|u\|_{X^{-(1-\al), \al}}$.

\medskip  

\noi
$\bullet$ {\bf Case 1:}
$\max(\s_0, \s_2) \ges \jb{n n_1n_2}^\frac{1}{100}$.
\\
\indent
Without loss of generality, assume 
$\s_0 \ges \jb{n n_1n_2}^\frac{1}{100}$.
Then, 
by \eqref{max} and the $L^4_{x, t}, L^2_{x, t}, L^4_{x, t}$-H\"older's inequality
followed by the $L^4$-Strichartz estimate \eqref{L4},  
we have
\begin{align}
\begin{split}
\eqref{BX1} 
& \les \sum_{\substack{n, n_1 \in \Z\\n = n_1 + n_2}}
\intt_{\tau = \tau_1 + \tau_2} d\tau d\tau_1
\frac{d(n, \tau)}{\s_0^{\al-200\dl}}
\jb{n_1}^{-\frac 12 - \dl} \s_1^{\frac 12 - \dl}|\ft{\ind_{[0, T]}\Psi}(n_1, \tau_1)|
\frac{ |c(n_2, \tau_2)|}{\s^\al_2}\\
& \les 
\|\ind_{[0, T]}\Psi\|_{X^{-\frac 12 - \dl, \frac 12 -\dl}}
\|u\|_{X^{-(1-\al), \al, T}}
\end{split}
\label{BX2}
\end{align}

\noi
by taking $\dl > 0$ sufficiently small.

\medskip  

\noi
$\bullet$ {\bf Case 2:}
$\max(\s_0, \s_2) \ll \jb{n n_1n_2}^\frac{1}{100}$.
\\
\indent
Define the set    $\O(n)$ by 
\begin{align*} 
\O(n) = \big\{   \s \in \R : &  \, \s = -3 n n_1 n_2 + o(\jb{n n_1 n_2}^\frac{1}{100})\\
& \,  \text{for some } n_1, n_2 \in \Z_*
\text{ with } n = n_1 + n_2 \big\}. 
\end{align*}

\noi
Then, we have
\begin{align}
\int \jb{\tau - n^3}^{-\frac 34} \ind_{\O(n)}(\tau - n^3) d\tau \les 1.
\label{OMG2}
\end{align}

\noi
See \cite[Lemma 5.3]{OH4}.
By \eqref{max}, the $L^4_{x, t}, L^2_{x, t}, L^4_{x, t}$-H\"older's inequality, 
the $L^4$-Strichartz estimate~\eqref{L4},  and 
H\"older's inequality (in $\tau$) with \eqref{OMG2}, 
we have
\begin{align}
\begin{split}
\eqref{BX1} 
& \les \sum_{\substack{n, n_1 \in \Z\\n = n_1 + n_2}}
\intt_{\tau = \tau_1 + \tau_2} d\tau d\tau_1
\frac{d(n, \tau)}{\s_0^{\al}}\\
& \hphantom{XXXXX} 
\times \ind_{\O(n_1)}(\tau_1 - n_1^3) \jb{n_1}^{-\frac 12 - \dl} \s_1^{\frac 12 + \dl}|\ft{\ind_{[0, T]}\Psi}(n_1, \tau_1)|
\frac{ |c(n_2, \tau_2)|}{\s^\al_2}\\
& \les \| \ind_{\O(n_1)}(\tau_1 - n_1^3)\jb{n_1}^{-\frac 12 - \dl} \s_1^{\frac 12 + \dl}\ft{\ind_{[0, T]}\Psi}(n_1, \tau_1)
\|_{\l^2_{n_1}L^2_{\tau_1}}\|u\|_{X^{-(1-\al), \al}}\\
& \les 
\|\ind_{[0, T]}\Psi\|_{Y^{-\frac 12 - \dl, \frac {11}{16} +\dl}_{2, 4}}\|u\|_{X^{-(1-\al), \al, T}}, 
\end{split}
\label{BX3}
\end{align}

\noi
where the $Y^{s, b}_{p, q}$-norm is defined in \eqref{Xsb3}.

\medskip
Given $N \in \NB$, a similar computation yields
\begin{align}
\begin{split}
& \| \N_1(\P_N^\perp \Psi, u)\|_{X^{-\al, 1- \al, T}}\\
& \quad \les 
\Big( \|\ind_{[0, T]}\P_N^\perp \Psi\|_{X^{-\frac 12 - \dl, \frac 12 -\dl}}
+ 
\|\ind_{[0, T]}\P_N^\perp \Psi\|_{Y^{-\frac 12 - \dl, \frac {11}{16} +\dl}_{2, 4}}
\Big)\|u\|_{X^{-(1-\al), \al, T}},
\end{split}
\label{BX4}
\end{align}

\noi
which motivates the definition of $\wt L_{\o, N}^\perp(T)$ in \eqref{F3}.

\begin{ack}\rm
T.O.~was supported by the European Research Council (grant no.~637995 ``ProbDynDispEq''
and grant no.~864138 ``SingStochDispDyn"). 
J.Q.~was
partially supported by an NSERC discovery grant.
P.S.~was partially supported by NSF grant DMS-1811093.
The authors would like to thank the anonymous referee for 
the helpful comments.
\end{ack}

\end{document}